\newcommand{\R}{\mathbb{R}}
\newcommand{\N}{\mathbb{N}}
\newcommand{\Cci}{C_{c}^{\infty}}
\newcommand{\pq}{\frac{1}{p}+\frac{1}{q}=1}
\newcommand{\ot}{\frac{1}{t}}
\newcommand{\xot}[1]{\frac{#1}{t}}
\newcommand{\ssubset}{\subset\joinrel\subset}
\DeclareMathOperator{\sign}{sign}
\theoremstyle{plain}
\newtheorem{theorem}{Theorem}[section]
\newtheorem{proposition}[theorem]{Proposition}
\newtheorem{corollary}[theorem]{Corollary}
\newtheorem{lemma}[theorem]{Lemma}
\theoremstyle{definition}
\newtheorem{example}[theorem]{Example}
\newtheorem{remark}[theorem]{Remark}
\title[Mapping Theorems for Sobolev Spaces]{Mapping Theorems for Sobolev Spaces of  Vector-Valued Functions}
\author{\sc W. Arendt}
\address{Wolfgang Arendt\\Institute of Applied Analysis\\Ulm University\\89069 Ulm\\Germany}
\email{wolfgang.arendt@uni-ulm.de}
\author{\sc M. Kreuter}
\address{Marcel Kreuter\\Institute of Applied Analysis\\Ulm University\\89069 Ulm\\Germany}
\email{marcel.kreuter@uni-ulm.de}
\date{\today}
\keywords{Sobolev spaces of vector-valued functions, composition with Lipschitz continuous mappings, composition with Gateaux differentiable mappings, embedding theorems, Aubin-Lions Lemma, boundary values}
\subjclass[2010]{46E40, 46E35}
\begin{document}
\maketitle

\begin{abstract}
We consider Sobolev spaces with values in Banach spaces as they are frequently useful in applied problems. Given two Banach spaces $X\neq\{0\}$ and $Y$, each Lipschitz continuous mapping $F:X\rightarrow Y$ gives rise to a mapping $u\mapsto F\circ u$ from $W^{1,p}(\Omega,X)$ to $W^{1,p}(\Omega,Y)$ if and only if $Y$ has the Radon-Nikodym Property. But if $F$ is one-sided Gateaux differentiable no condition on the space is needed. We also study when weak properties in the sense of duality imply strong properties. Our results are applied to prove embedding theorems, a multi-dimensional version of the Aubin-Lions Lemma and characterizations of the space $W^{1,p}_0(\Omega,X)$.
\end{abstract}

\section{Introduction}
Sobolev spaces with values in a Banach space are quite natural objects and occur frequently while treating partial differential equations (see e.g. \cite{Amann1995}, \cite{Amann}, \cite{DenkHieberPruess}, \cite{Mugnolo}, \cite{ArendtDier}) and also in probability (see e.g. the upcoming monograph by Hyt\"onen, van Neerven, Veraar and Weis \cite{VanNeerven}). In the case of one space variable quite a few results are known and well documented (see e.g. \cite{CazenaveHaraux} or \cite{Showalter}). However, in higher dimension there seems to be no systematic treatment. The purpose of this article is to study the space $W^{1,p}(\Omega,X)$ where $X$ is a Banach space. We are interested in properties the working analyst frequently needs. In some cases scalar proofs just go through and we will only give a reference. But frequently new proofs and ideas are needed. This is in particular the case for \emph{mapping properties}, our main subject on the article.\\

One question we treat is when each Lipschitz map $F:X\rightarrow Y$ leads to a composition mapping $u\mapsto F\circ u:\ W^{1,p}(\Omega,X)\rightarrow W^{1,p}(\Omega,Y)$. It turns out that in general, this is equivalent to $Y$ having the Radon-Nikodym Property. However, if we merely consider those Lipschitz continuous $F$ that are one-sided Gateaux differentiable, then the result is always true and even a chain rule can be proved. An important case is $F(x)=\|x\|_X$. In this case the chain rule becomes particularly important since it can be used to lift results from the scalar-valued to the vector-valued case. Special attention is also given to the mapping $F(x)=|x|$ where $X$ is a Banach lattice, e.g. $X=L^r(\Omega)$ or $X=C(K)$ which occured in \cite{ArendtDier}. This mapping and more precisely differentiability properties of the projection onto a convex set in Hilbert space have also been studied by Haraux \cite{Haraux}.\\

Weak properties in the sense of duality also form an important subject concerning mapping properties. It is the inverse question we ask: Let $u:\Omega\rightarrow X$ be a function such that $x'\circ u$ has some regularity property for all $x'\in X'$. Does it follow that $u$ has the corresponding regularity property? In other words we ask whether \emph{weak implies strong}. For example, it is well known that a weakly holomorphic map is holomorphic, see \cite{Grothendieck}, \cite[Theorem A.3]{ArendtBatty}. The same is true for harmonic maps, see \cite{ArendtHarmonic}. Here we show that each weakly H\"older continuous function is weakly H\"older continuous. However, in the setting of Sobolev spaces there exist functions $u:\Omega\rightarrow X$ that are not in $W^{1,p}(\Omega,X)$ such that $x'\circ u\in W^{1,p}(\Omega,\R)$ for all $x'\in X'$. On the other hand we show that a Banach lattice $X$ is weakly sequentially complete if and only if each function $u:\Omega\rightarrow X$ such that $x'\circ u\in C^1(\overline{\Omega},\R)$ for all $x'\in X'$ is already in $W^{1,p}(\Omega,X)$. Another positive result concerns Dirichlet boundary conditions: If $u\in W^{1,p}(\Omega,X)$ such that $x'\circ u\in W^{1,p}_0(\Omega,\R)$ for all $x'$ in a separating subset of $X'$ then $u\in W^{1,p}_0(\Omega,X)$. Alternatively we show that $u\in W^{1,p}_0(\Omega,X)$ if and only if $\|u\|_X\in W^{1,p}_0(\Omega,\R)$.\\

The paper is organized as follows. We start by investigating when the quotient criterion characterizes the spaces $W^{1,p}(\Omega,X)$. In Section \ref{section Lipschitz} composition by Lipschitz maps is studied and in Section \ref{gateaux} we add the hypothesis of one-sided Gateaux differentiability. Weak properties and H\"older continuity are studied in Section \ref{section holder}. Finally we apply our results to investigate embedding theorems in Sections \ref{applications} and \ref{AubinLionsApplication} and weak boundary data in Section \ref{Dirichlet}.\\

We will denote all norms by $\|\cdot\|_X$, where $X$ denotes the Banach space to which the norm belongs, and operator norms by $\|\cdot\|_\mathcal{L}$. Also $B_X(x,r)$ and $S_X(x,r)$ will denote the open ball and the sphere of radius $r$ centered at $x\in X$ respectively.

\section{The Difference Quotient Criterion and the Radon-Nikodym Property}\label{section RNP}
Let $\Omega\subset\R^d$ be open, $X$ a Banach space and $1\leq p\leq\infty$. As in the real-valued case, the first Sobolev space $W^{1,p}(\Omega,X)$ is the space of all functions $u\in L^p(\Omega,X)$ for which there exist functions $D_j u\in L^p(\Omega,X)$ $(j=1,\ldots,d)$ such that
\begin{align*}
\int_{\Omega}{u\,\partial_j\varphi}=-\int_{\Omega}{D_j u\,\varphi}
\end{align*}
holds for all $\varphi\in\Cci(\Omega,\R)$, where we use the notation $\partial_j\varphi:=\frac{\partial}{\partial x_j}\varphi$ for the classical partial derivative. The function $D_ju$ is called the \emph{distributional derivative} of $u$ in direction $j$ (we refrain from using the common term 'weak derivative', since that may be confused with differentiability in the weak topology of $X$). Analogously to the real-valued case one sees that $D_ju$ is unique and that $D_ju=\partial_j u$ if $u\in C^1(\Omega,X)\cap L^p(\Omega,X)$ with $\partial_ju\in L^p(\Omega,X)$. The space $W^{1,p}(\Omega,X)$, equipped with the norm $\|u\|_{W^{1,p}(\Omega,X)} := \|u\|_{L^p(\Omega,X)}+\sum_j{\|D_ju\|_{L^p(\Omega,X)}}$ is a Banach space.\\

We want to establish a criterion for a function $u\in L^p(\Omega,X)$ to be in $W^{1,p}(\Omega,X)$ which is well known if $X=\R$. We start with a look at the following property of functions in $W^{1,p}(\Omega,X)$:

\begin{lemma}\label{easydiffquotient}
If $u\in W^{1,p}(\Omega,X)$ with $1\leq p\leq\infty$, then there exists a constant $C$ such that for all $\omega\subset\joinrel\subset\Omega$ and all $h\in\R$ with \mbox{$|h|<\text{dist}(\omega,\partial\Omega)$} we have
\begin{align}\label{criterion}
\|u(\cdot+h\,e_j)-u\|_{L^p(\omega,X)}\leq C|h|\quad(j=1,\ldots,d).
\end{align}
Moreover we can choose $C=\max_{j=1,\ldots,d}\|D_ju\|_{L^p(\Omega,X)}$.
\end{lemma}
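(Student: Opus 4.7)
The plan is to first prove the estimate for smooth functions via the fundamental theorem of calculus applied to the Bochner-integrable derivative, and then transfer it to the general case by mollification.

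First I would treat the smooth case: assume $u\in C^1(\Omega,X)\cap W^{1,p}(\Omega,X)$ with $\partial_j u\in L^p(\Omega,X)$ coinciding with $D_ju$. For $x\in\omega$ and $|h|<\mathrm{dist}(\omega,\partial\Omega)$, the segment $\{x+se_j : s\in[0,h]\}$ lies in $\Omega$, so the vector-valued fundamental theorem of calculus yields
\begin{equation*}
u(x+he_j)-u(x) = \int_0^h \partial_j u(x+se_j)\,ds,
\end{equation*}
with the integral understood in the Bochner sense. Taking $X$-norms and then $L^p(\omega)$-norms, Minkowski's integral inequality gives
\begin{equation*}
\|u(\cdot+he_j)-u\|_{L^p(\omega,X)} \le \int_0^{|h|}\|\partial_j u(\cdot+se_j)\|_{L^p(\omega,X)}\,ds \le |h|\,\|D_ju\|_{L^p(\Omega,X)},
\end{equation*}
using the translation invariance of the $L^p$-norm together with the fact that the translated set stays inside $\Omega$.

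For general $u\in W^{1,p}(\Omega,X)$ I would pass to smooth approximations by Friedrichs mollification: fix a standard mollifier $\rho_\varepsilon$ and set $u_\varepsilon:=\rho_\varepsilon\ast u$ on $\Omega_\varepsilon:=\{x\in\Omega:\mathrm{dist}(x,\partial\Omega)>\varepsilon\}$. Then $u_\varepsilon\in C^\infty(\Omega_\varepsilon,X)$, $\partial_j u_\varepsilon = \rho_\varepsilon\ast D_j u$ on $\Omega_\varepsilon$, and Young's inequality for Bochner convolutions gives $\|\partial_j u_\varepsilon\|_{L^p(\Omega_\varepsilon,X)}\le\|D_ju\|_{L^p(\Omega,X)}$. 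For $\varepsilon$ small enough that $\omega\ssubset\Omega_\varepsilon$ and $|h|+\varepsilon<\mathrm{dist}(\omega,\partial\Omega)$, the smooth case applied on $\Omega_\varepsilon$ yields
\begin{equation*}
\|u_\varepsilon(\cdot+he_j)-u_\varepsilon\|_{L^p(\omega,X)} \le |h|\,\|D_ju\|_{L^p(\Omega,X)}.
\end{equation*}
Since $u_\varepsilon\to u$ and $u_\varepsilon(\cdot+he_j)\to u(\cdot+he_j)$ in $L^p(\omega,X)$ as $\varepsilon\to 0$ (standard mollifier convergence for Bochner $L^p$ functions, which is valid without RNP hypotheses), the estimate persists in the limit, giving the claim with $C=\max_j\|D_ju\|_{L^p(\Omega,X)}$.

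The main delicate point, and the one I would be most careful about, is handling the case $p=\infty$: Minkowski's integral inequality still applies, but $L^\infty$-convergence of mollifications fails in general. Here I would instead argue by duality, testing against $L^1$ functions and using the already-established bound for $p<\infty$ applied to the continuous linear functionals, or equivalently, note that each $D_ju\in L^\infty(\Omega,X)$ implies $u_\varepsilon$ is smooth with $\|\partial_j u_\varepsilon\|_\infty \le \|D_ju\|_\infty$ and pass to the limit pointwise a.e.\ along a subsequence, using that the right-hand side constant is independent of $\varepsilon$.
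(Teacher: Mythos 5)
Your proof is correct and follows essentially the same route as the paper, which simply invokes the regularization argument of Brezis (Proposition 9.3) and notes that it carries over verbatim to the vector-valued setting: prove the bound for smooth functions via the Bochner fundamental theorem of calculus and Minkowski's inequality, then mollify and pass to the limit, with no Radon--Nikodym hypothesis needed since only $L^p$-approximation of $u$ (not differentiation of Lipschitz maps) is involved. Your fix for $p=\infty$ via the $\varepsilon$-uniform pointwise bound and a.e. convergence along a subsequence is adequate and consistent with this scheme.
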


\begin{proof}
This can be proven analogously to the real-valued case, see \cite[Proposition 9.3]{Brezis}. Note that the proof is based on regularization arguments which work analogously in the vector-valued case.
\end{proof}

It is well known that a function $u\in L^p(\Omega,\R)$ $(1<p\leq\infty)$ which satisfies criterion (\ref{criterion}) is in $W^{1,p}(\Omega,\R)$, see \cite[Proposition 9.3]{Brezis} and that this is false in general if $p=1$, see \cite[Chapter 9, Remark 6]{Brezis}. We will refer to (\ref{criterion}) as the \emph{Difference Quotient Criterion}. We are interested in extending this criterion to Banach spaces. Such theorems have been proven in special cases, e.g. for $p=2$ and $X$ a Hilbert space in \cite[Lemma A.3]{Mugnolo}. We will show that the criterion describes the spaces $W^{1,p}(\Omega,X)$ if and only if $X$ has the \emph{Radon-Nikodym Property}, that is, every Lipschitz continuous function $f$ from an interval $I$ to $X$ is differentiable almost everywhere, see \cite[Section 1.2]{ArendtBatty}.\\

\begin{theorem}\label{RNP implies criterion}
Let $1<p\leq\infty$ and let $u\in L^p(\Omega,X)$ where $X$ is a Banach space that has the Radon-Nikodym Property. Assume that $u$ satisfies the Difference Quotient Criterion (\ref{criterion}). Then $u\in W^{1,p}(\Omega,X)$ and $\|D_ju\|_{L^p(\Omega,X)}\leq C$ for all $j=1,\ldots,d$.
\end{theorem}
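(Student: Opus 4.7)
The plan is to follow the scalar-valued argument of \cite[Proposition 9.3]{Brezis}, but replace its reflexivity-based weak-compactness step by a Radon--Nikodym representation of vector-valued operators on $L^q$; this is where the RNP hypothesis will enter.

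First I fix $\omega\ssubset\Omega$ and $j\in\{1,\ldots,d\}$. For $|h|<\operatorname{dist}(\omega,\partial\Omega)$ the difference quotients $v_h:=h^{-1}(u(\cdot+he_j)-u)$ lie in $L^p(\omega,X)$ with $\|v_h\|_{L^p(\omega,X)}\le C$ by hypothesis. For $\varphi\in\Cci(\omega,\R)$ a translation change of variables gives
\[
\int_\omega v_h\,\varphi\,dx \;=\; -\int_\omega u(x)\,\frac{\varphi(x)-\varphi(x-he_j)}{h}\,dx \;\xrightarrow[h\to 0]{}\; -\int_\omega u\,\partial_j\varphi\,dx \;=:\; T(\varphi),
\]
and combining this limit with H\"older's inequality yields $\|T(\varphi)\|_X\le C\,\|\varphi\|_{L^q(\omega)}$. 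Since $1<p\le\infty$ gives $1\le q<\infty$, density of $\Cci(\omega,\R)$ in $L^q(\omega,\R)$ lets me extend $T$ uniquely to a bounded linear map $T\colon L^q(\omega,\R)\to X$ with $\|T\|_{\LL}\le C$.

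The key step uses RNP: $T$ is \emph{representable}, i.e.\ there exists a unique $g\in L^p(\omega,X)$ with $T(\varphi)=\int_\omega g\,\varphi\,dx$ for all $\varphi\in L^q(\omega,\R)$ and $\|g\|_{L^p(\omega,X)}=\|T\|_{\LL}\le C$. For $q=1$ this is the Diestel--Uhl theorem $\LL(L^1(\omega),X)=L^\infty(\omega,X)$, which holds precisely when $X$ has the RNP; for $1<q<\infty$ the same conclusion follows from applying RNP to the vector measure $E\mapsto T(\chi_E)$, whose bounded $q$-semivariation (encoded by $\|T\|_{\LL}\le C$) forces its Radon--Nikodym density to lie in $L^p$ rather than merely in $L^1$, as one sees from the standard martingale proof of RNP by averaging over a refining sequence of partitions and using Fatou. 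Specialising the identity back to $\varphi\in\Cci(\omega,\R)$ recovers $\int_\omega g\,\varphi\,dx=-\int_\omega u\,\partial_j\varphi\,dx$, so $g$ is the distributional derivative of $u$ in direction $e_j$ on $\omega$.

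Finally, I exhaust $\Omega$ by an increasing sequence $\omega_n\ssubset\Omega$ with $\bigcup_n\omega_n=\Omega$. Uniqueness of distributional derivatives forces the local representatives to agree on overlaps, so they assemble into a global $D_ju\in L^p_{\mathrm{loc}}(\Omega,X)$, and Fatou's lemma upgrades the uniform local bound to $\|D_ju\|_{L^p(\Omega,X)}\le C$. The main obstacle is the representation step: without RNP one can produce only a weakly measurable density for the limiting operator, which is insufficient to place $D_ju$ in $L^p(\Omega,X)$.
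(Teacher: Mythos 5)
Your reduction to the operator $T\colon L^q(\omega,\R)\to X$ is fine, and so is the branch $p=\infty$, $q=1$: the Diestel--Uhl theorem really does say that $X$ has the RNP iff every bounded $T\colon L^1(\omega)\to X$ has a density $g\in L^\infty(\omega,X)$, and there $\|g\|_{L^\infty(\omega,X)}=\|T\|_{\LL}$. The gap is in the main case $1<q<\infty$: it is \emph{false} that the RNP of $X$ makes every bounded $T\colon L^q(\omega)\to X$ representable by a Bochner density $g\in L^p(\omega,X)$. Indeed, if $g\in L^p(\omega,X)$ with $p<\infty$, approximating $g$ by simple functions and using H\"older gives $\|T_g-T_{g_n}\|_{\LL}\le\|g-g_n\|_{L^p(\omega,X)}$, so every such operator is a norm limit of finite-rank operators, hence compact; but a unitary $T\colon L^2(\omega)\to\ell^2$ is bounded, non-compact, and $\ell^2$ has the RNP. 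The mechanism you invoke also breaks down at exactly this point: the bound $\|T(\chi_E)\|_X\le C\mu(E)^{1/q}$ (bounded $q$-semivariation) does \emph{not} give bounded variation of $E\mapsto T(\chi_E)$, nor an $L^1$- or $L^p$-bounded martingale of partition averages -- for the unitary example the averages $g_\pi=\sum_i T(\chi_{E_i})\mu(E_i)^{-1}\chi_{E_i}$ over a partition into $n$ sets of equal measure satisfy $\|g_\pi\|_{L^2(\omega,\ell^2)}^2=n$. Moreover, even granting representability, your norm claim $\|g\|_{L^p(\omega,X)}=\|T\|_{\LL}$ is wrong for $1<q<\infty$ (only $\|T\|_{\LL}\le\|g\|_{L^p(\omega,X)}$ holds): for $X=\R^2$, $g=(1_{(0,1/2)},1_{(1/2,1)})$, $p=q=2$, one has $\|g\|_{L^2(\omega,X)}=1$ but $\|T\|_{\LL}=2^{-1/2}$. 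So the asserted conclusion $\|D_ju\|_{L^p(\Omega,X)}\le C$ would not follow from $\|T\|_{\LL}\le C$ either.

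The information your argument discards is precisely what is needed: the difference quotients $v_h$ are uniformly bounded in $L^p(\omega,X)$ \emph{as functions}, not merely as operators, and the operator norm of $T$ does not remember this. Two repairs are possible. One is to keep the $v_h$: since scalar $L^p(\omega,\R)$ is reflexive, pass to a weak limit $w$ of $\|v_h\|_X$ to get $\|T(\chi_E)\|_X\le\int_E w$, hence $\|g_\pi(\xi)\|_X$ is dominated pointwise by the conditional expectation of $w$; then the martingale $(g_\pi)$ is $L^p(\omega,X)$-bounded, RNP (via martingale convergence) yields a density, and Fatou gives the bound $C$. The other is the route the paper takes: apply the RNP not to $X$-valued measures but to the Lipschitz map $t\mapsto u(\cdot+te_j)$ with values in $L^p(\omega',X)$, which inherits the RNP from $X$ for $1<p<\infty$ (Sundaresan--Turett--Uhl); differentiate it at one point, extract a subsequence of difference quotients converging almost everywhere, identify the limit as the distributional derivative by dominated convergence, and get $\|D_ju\|_{L^p}\le C$ from Fatou, with the case $p=\infty$ handled by localizing to finite $q$ and letting $q\to\infty$. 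As written, your proof establishes the theorem only for $p=\infty$.
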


We will use the fact that $L^p(\Omega,X)$ inherites the Radon-Nikodym Property from $X$.

\begin{theorem}[Sundaresan, Turett, Uhl]\label{Sundaresan}
If $(S,\Sigma,\mu)$ is a finite measure space and $1<p<\infty$, then $L^p(S,X)$ has the Radon-Nikodym Property if and only if $X$ does.
\end{theorem}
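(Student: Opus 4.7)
\emph{Easy direction: $L^p(S,X)$ has RNP $\Rightarrow$ $X$ has RNP.} I would first fix any $A\in\Sigma$ with $0<\mu(A)<\infty$ and consider the linear isometry $\iota:X\to L^p(S,X)$ defined by $\iota(x):=\mu(A)^{-1/p}\mathbf{1}_A\cdot x$. Its image is a closed subspace, isometrically isomorphic to $X$. The Lipschitz-function characterization of the RNP recalled in the excerpt is immediately hereditary to closed subspaces: the a.e.\ derivative of a Lipschitz map valued in a closed subspace is a norm-limit of difference quotients, hence still lies in that subspace. Therefore $X$ inherits the RNP from $L^p(S,X)$.

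\emph{Hard direction: $X$ has RNP $\Rightarrow$ $L^p(S,X)$ has RNP.} Here the plan is to switch to the equivalent vector-measure formulation of the RNP: a Banach space $Y$ has the RNP if and only if every countably additive $Y$-valued measure $G$ on a finite measure space $(T,\mathcal{T},\nu)$ that is of bounded variation and satisfies $G\ll\nu$ admits a Bochner-integrable density $g\in L^1(\nu,Y)$ with $G(A)=\int_A g\,d\nu$. (This equivalence with the Lipschitz definition is classical; see Diestel and Uhl, \emph{Vector Measures}, Chapter~III.) Let $G:\mathcal{T}\to L^p(S,X)$ be such a measure. For each fixed $B\in\Sigma$, I would define $G_B:\mathcal{T}\to X$ by $G_B(A):=\int_B G(A)\,d\mu$. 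H\"older's inequality gives $\|G_B(A)\|_X\le\mu(B)^{1/p'}\|G(A)\|_{L^p(S,X)}$, so $G_B$ is countably additive, of bounded variation, and absolutely continuous with respect to $\nu$. Applying the RNP of $X$ to each $G_B$ yields $h_B\in L^1(\nu,X)$ with $G_B(A)=\int_A h_B\,d\nu$.

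\emph{Assembly step, and the main obstacle.} The crux is to patch the family $\{h_B\}_{B\in\Sigma}$ into a single function $g:T\to L^p(S,X)$ realizing $G(A)=\int_A g\,d\nu$. My plan is the following. First, using uniqueness of Radon-Nikodym derivatives and a monotone-class argument, I would show that, after discarding a $\nu$-null set, the assignment $B\mapsto h_B(t)$ is a countably additive $X$-valued measure on $\Sigma$ for $\nu$-a.e.\ $t$, absolutely continuous with respect to $\mu$. A second application of the RNP of $X$ then produces, for $\nu$-a.e.\ $t$, a function $g(t)\in L^1(\mu,X)$ with $h_B(t)=\int_B g(t)\,d\mu$. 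The real difficulty is to upgrade $g(t)$ from $L^1(\mu,X)$ to $L^p(S,X)$, with $\|g(t)\|_{L^p(S,X)}\in L^1(\nu)$. For this I would test against simple functions in $L^{p'}(S,X')$ via the isometric embedding $L^{p'}(S,X')\hookrightarrow L^p(S,X)'$, transferring the bounded-variation estimate on $G$ to a norm estimate on $g(t)$; this is the step where the hypothesis $1<p<\infty$ is used decisively. Once the required integrability is in hand, the identity $G(A)=\int_A g\,d\nu$ follows by testing on the generating sets $A\times B$ with $A\in\mathcal{T}$ and $B\in\Sigma$. The integrability upgrade, through duality with $L^{p'}(S,X')$, is the step I expect to be the main technical obstacle.
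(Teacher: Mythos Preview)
The paper does not prove this theorem at all: its ``proof'' is the single line ``See \cite{TurettUhl}.'' So there is nothing to compare your argument against within the paper itself. It is worth noting, however, that the cited reference of Turett and Uhl (as its very title announces) proceeds \emph{by martingales}: one uses the characterization that a Banach space $Y$ has the RNP if and only if every $Y$-valued martingale bounded in $L^1$ (equivalently, every uniformly integrable martingale) converges a.e., and then checks that an $L^p(S,X)$-valued martingale on $(T,\mathcal T,\nu)$ can be viewed, via Fubini, as an $X$-valued martingale on the product space, so that the RNP of $X$ forces convergence. That route is short and avoids the double Radon--Nikod\'ym construction entirely.

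Your plan follows instead the older Sundaresan-style approach via the vector-measure characterization, and it is a viable outline. Two remarks on the assembly step you flag as the main obstacle. First, the monotone-class argument showing that $B\mapsto h_B(t)$ is countably additive for $\nu$-a.e.\ $t$ only works verbatim if $\Sigma$ is countably generated; otherwise you face uncountably many exceptional null sets. You should either reduce to this case at the outset (harmless, since any $f\in L^p(S,X)$ is measurable with respect to a countably generated sub-$\sigma$-algebra) or state the reduction explicitly. Second, to invoke the RNP of $X$ a second time you also need $B\mapsto h_B(t)$ to have bounded variation for a.e.\ $t$, not just countable additivity; this follows from the total-variation bound on $G$ but deserves a line. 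Your duality idea for the $L^p$-upgrade is correct in spirit: $L^{p'}(S,X')$ embeds isometrically into $L^p(S,X)'$ and is norming even when it is not the whole dual, which is all you need here. In sum, your route is correct but noticeably heavier than the martingale proof the paper actually cites; the martingale argument buys you a one-step transfer where yours requires two passes through the RNP of $X$ plus a measurable-selection/assembly argument.
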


\begin{proof}
See \cite{TurettUhl}.
\end{proof}

\begin{proof}[Proof of Theorem \ref{RNP implies criterion}]
We first consider the case $1<p<\infty$. Fix a direction $j\in\{1,\ldots,d\}$ and let $\omega\ssubset\Omega$ be bounded. We claim that the distributional derivative of $u_{|\omega}$ exists in $L^p(\omega,X)$ and that its norm is bounded by $C$. For that let $\omega\ssubset\omega'\ssubset\Omega$ and $\tau>0$ be small enough such that the function
\begin{align*}
G:&(-\tau,\tau)\rightarrow L^p(\omega',X)\\
&t\mapsto u(\cdot+te_j)
\end{align*}
is well defined. By assumption $G$ is Lipschitz continuous and hence differentiable almost everywhere by Theorem \ref{Sundaresan}. Fix $0<t_0<\text{dist}(\omega,\partial\omega')$ such that
\begin{align*}
G'(t_0)=\lim_{h\rightarrow0}\frac{u(\cdot+(t_0+h)e_j)-u(\cdot+t_0e_j)}{h}
\end{align*}
exists in $L^p(\omega,X)$. Choose a sequence $h_n\rightarrow0$ such that the above convergence holds almost everywhere in $\omega$, then the function
\begin{align*}
g_\omega(\xi)&:=\lim_{n\rightarrow\infty}\frac{u(\xi+h_ne_j)-u(\xi)}{h_n}\\
&=\lim_{n\rightarrow\infty}\frac{u(\xi-t_0e_j+(t_0+h_n)e_j)-u(\xi-t_0e_j+t_0e_j)}{h_n}
\end{align*}
is an element of $L^p(\omega,X)$ whose norm is bounded by $C$. Given $\varphi\in\Cci(\omega,\R)$ the Dominated Convergence Theorem implies that
\begin{align*}
\int_\omega{\varphi g_\omega}&=\lim_{n\rightarrow\infty}\int_{\omega}{\varphi(\xi)\frac{u(\xi+h_ne_j)-u(\xi)}{h_n}\,d\xi}\\
&=\lim_{n\rightarrow\infty}\int_{\omega}{\frac{\varphi(\xi-h_ne_j)-\varphi(\xi)}{h_n}u(\xi)\,d\xi}\\
&=-\int_\omega{\partial_j\varphi u}.
\end{align*}
This proves the claim. Now let $\omega_n\ssubset\omega_{n+1}\ssubset\Omega$ such that $\bigcup_{n\in\N}{\omega_n}=\Omega$ and let $g_{\omega_n}$ be the corresponding functions found in the first step of the proof. These functions may be pieced together to a function $g\in L^p(\Omega,X)$ whose norm is bounded by $C$. For any $\varphi\in\Cci(\Omega,\R)$ there exists a set $\omega_n$ such that $\varphi\in\Cci(\omega_n,\R)$. Thus the first step shows that $g=D_ju$ finishing the case $1<p<\infty$.\\

For $p=\infty$ let $\omega\ssubset\Omega$ be bounded, then $u\in L^q(\omega,X)$ for all $q<\infty$. Let $\omega_0\ssubset\omega$ and $|h|\leq\text{dist}(\omega_0,\partial\omega)$. Then
\begin{align*}
\|u(\cdot+he_j)-u\|_{L^q(\omega_0,X)}\leq C|h|\lambda(\omega)^{\frac{1}{q}},
\end{align*}
hence $u\in W^{1,q}(\omega,X)$ with $\|D_ju\|_{L^q(\omega,X)}\leq C\lambda(\omega)^{\frac{1}{q}}$ by the first part of the proof. Letting $q\rightarrow\infty$ yields that $\|D_ju\|_{L^\infty(\omega,X)}\leq C$ and hence $u\in W^{1,\infty}(\omega,X)$. The proof can now be finished analogously to the case $1<p<\infty$.
\end{proof}

We now want to show that the Radon-Nikodym Property of $X$ is not only sufficient for the Difference Quotient Criterion to work, but also necessary. The prove this, we need the following result on Sobolev functions in one dimension.

\begin{proposition}\label{fundamental}
Let $I$ be an interval and let $1\leq p\leq\infty$. For a function $u\in L^p(I,X)$ the following are equivalent
\begin{compactenum}[(i)]
\item $u\in W^{1,p}(I,X)$
\item $u$ is absolutely continuous, differentiable almost everywhere and $\frac{d}{dt}u\in L^p(I,X)$
\item There exists a function $v\in L^p(I,X)$ and a $t_0\in I$ such that $u(t)=u(t_0)+\int_{t_0}^{t}{v(s)\,ds}$ holds almost everywhere
\end{compactenum}
\end{proposition}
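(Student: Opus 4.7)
The plan is to prove the two equivalences (i)$\Leftrightarrow$(iii) and (ii)$\Leftrightarrow$(iii), using (iii) as the hub.

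For (iii)$\Rightarrow$(i) I compute $\int_I u\varphi'$ directly for $\varphi\in C_c^\infty(I,\R)$. The constant term $u(t_0)\int_I\varphi'$ vanishes, and after applying Fubini to the Bochner-integrable kernel $(s,t)\mapsto v(s)\varphi'(t)\chi_{\{s\text{ between }t_0\text{ and }t\}}$, the remaining double integral collapses to $-\int_I v\varphi$, showing $Du=v\in L^p(I,X)$. For (i)$\Rightarrow$(iii), which is the main technical step, I mollify: pick a standard $\rho\in C_c^\infty(\R)$ and set $u_\varepsilon:=\rho_\varepsilon*u$ on $I_\varepsilon:=\{t\in I:\operatorname{dist}(t,\partial I)>\varepsilon\}$. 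A short computation shows $(u_\varepsilon)'=\rho_\varepsilon*(Du)$, so the classical fundamental theorem for smooth Banach-valued functions yields $u_\varepsilon(t)=u_\varepsilon(t_1)+\int_{t_1}^t(Du)_\varepsilon(s)\,ds$ on $I_\varepsilon$. Since $u_\varepsilon\to u$ and $(Du)_\varepsilon\to Du$ in $L^p_{\mathrm{loc}}(I,X)$, extracting an a.e.\ convergent subsequence and choosing $t_1=t_0$ to be a point of convergence allows passage to the limit to give (iii).

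For (iii)$\Rightarrow$(ii), the indefinite Bochner integral is absolutely continuous via the elementary estimate $\|\int_s^t v(r)\,dr\|_X\leq\int_s^t\|v(r)\|_X\,dr$, and is a.e.\ differentiable with derivative $v$ by the vector-valued Lebesgue differentiation theorem, which applies since $v$ is essentially separably valued. For (ii)$\Rightarrow$(iii), I reduce to the scalar case by Hahn-Banach: for every $x'\in X'$ the scalar function $x'\circ u$ is absolutely continuous with derivative $x'(u'(\cdot))\in L^p(I,\R)$, so the scalar fundamental theorem of calculus gives $x'\bigl(u(t)-u(t_0)-\int_{t_0}^t u'(s)\,ds\bigr)=0$; as this holds for all $x'\in X'$, the vector identity in (iii) follows.

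The one delicate step is the mollification argument in (i)$\Rightarrow$(iii): one must verify that convolution commutes with distributional differentiation in the vector-valued setting without tacitly invoking the Radon-Nikodym Property. This is nonetheless routine, since Bochner integration supports Fubini and differentiation under the integral sign for smooth kernels exactly as in the scalar case, and the pointwise-a.e.\ extraction only uses $L^p_{\mathrm{loc}}$ convergence after mollification.
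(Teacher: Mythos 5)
Your argument is essentially correct, but it takes a different route from the paper in the trivial sense that the paper offers no proof at all: it simply cites \cite[Theorem 1.4.35]{CazenaveHaraux}. Your hub-and-spoke scheme through (iii) is the standard way to prove that reference's statement, and all the key points are sound: in (iii)$\Rightarrow$(i) the Fubini computation for the Bochner integral is legitimate; in (i)$\Rightarrow$(iii) the identity $(u_\varepsilon)'=\rho_\varepsilon\ast(Du)$ follows directly from testing the distributional derivative against the scalar test function $s\mapsto\rho_\varepsilon(t-s)$, so indeed no Radon--Nikodym Property is used anywhere; in (iii)$\Rightarrow$(ii) the a.e.\ differentiability comes from the vector-valued Lebesgue differentiation theorem, valid for any strongly measurable integrand; and (ii)$\Rightarrow$(iii) by Hahn--Banach plus the scalar Lebesgue fundamental theorem is correct. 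Two small repairs are worth making. First, for $p=\infty$ the claim that $u_\varepsilon\to u$ and $(Du)_\varepsilon\to Du$ in $L^p_{\mathrm{loc}}$ is false as stated (mollification does not converge in $L^\infty$ unless the function is uniformly continuous); but convergence in $L^1_{\mathrm{loc}}$ holds for every $1\leq p\leq\infty$ and is all the limit passage needs, so just say $L^1_{\mathrm{loc}}$. Second, if $I$ is unbounded and $1<p\leq\infty$, the absolute continuity of $t\mapsto\int_{t_0}^t v$ over the whole of $I$ (not merely on compacts) requires H\"older's inequality, $\sum_i\|u(b_i)-u(a_i)\|_X\leq\|v\|_{L^p}\bigl(\sum_i(b_i-a_i)\bigr)^{1/p'}$, rather than only the pointwise bound you quote; this is a one-line addition. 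With these touches your proof is a complete, self-contained replacement for the citation, at the cost of about a page of routine verification that the paper avoids by referring to Cazenave--Haraux.
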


\begin{proof}
See \cite[Theorem 1.4.35]{CazenaveHaraux}.
\end{proof}

\begin{theorem}\label{RNP criterion other direction}
Let $1<p\leq\infty$. A Banach space $X$ has the Radon-Nikodym Property if and only if the Difference Quotient Criterion characterizes the space $W^{1,p}(\Omega,X)$.
\end{theorem}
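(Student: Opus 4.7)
The forward implication is already established: Lemma \ref{easydiffquotient} combined with Theorem \ref{RNP implies criterion} shows that if $X$ has the RNP then the Difference Quotient Criterion is exactly equivalent to membership in $W^{1,p}(\Omega,X)$ for every open $\Omega$ and $1<p\leq\infty$. Therefore the content of this theorem lies in the converse, which I plan to prove by contraposition: assuming that $X$ lacks the RNP, I will exhibit an $\Omega$ and a function $u\in L^p(\Omega,X)$ that satisfies the Difference Quotient Criterion but does not belong to $W^{1,p}(\Omega,X)$.

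By definition, failure of the RNP supplies a bounded interval $I$ (which I will take to be $(0,1)$) and a Lipschitz continuous $f: I \to X$ that is not differentiable almost everywhere. Being bounded, $f$ lies in $L^p(I,X)$ for every $p$; by Proposition \ref{fundamental}, however, the lack of a.e.\ differentiability rules out $f\in W^{1,p}(I,X)$. This already settles the case $d=1$, since Lipschitz continuity implies the Difference Quotient Criterion directly. For general $d$, I would choose $\Omega := I\times(0,1)^{d-1}$ and set $u(x_1,x') := f(x_1)$. Then $u\in L^p(\Omega,X)$ by boundedness, and the Lipschitz estimate on $f$ yields $\|u(\cdot+he_1)-u\|_{L^p(\omega,X)}\leq L|h|\lambda(\Omega)^{1/p}$ uniformly in $\omega\ssubset\Omega$, while translation in directions $e_j$ with $j\geq 2$ leaves $u$ unchanged. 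Hence $u$ satisfies the Difference Quotient Criterion.

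To rule out $u\in W^{1,p}(\Omega,X)$, I plan a Fubini-type argument. Fix $\psi\in\Cci((0,1)^{d-1},\R)$ with $\int\psi=1$ and, assuming $D_1 u$ exists in $L^p(\Omega,X)$, define $g(x_1):=\int_{(0,1)^{d-1}}D_1u(x_1,x')\psi(x')\,dx'$. By Hölder's inequality (or its Minkowski variant for Bochner integrals) one places $g\in L^p(I,X)$. Testing the defining identity of $D_1u$ against a tensor-product function $\Phi(x_1,x'):=\varphi(x_1)\psi(x')$ with $\varphi\in\Cci(I,\R)$ and invoking Fubini yields $\int_I f\,\varphi' = -\int_I g\,\varphi$, identifying $g$ as the distributional derivative of $f$ on $I$. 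This would force $f\in W^{1,p}(I,X)$, contradicting the preceding paragraph, so $u\notin W^{1,p}(\Omega,X)$ and the contrapositive is complete.

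The principal obstacle I foresee is administrative rather than conceptual: one must verify that the Bochner integral defining $g$ produces a measurable, $L^p$-integrable $X$-valued function of $x_1$, and that Fubini's theorem genuinely applies to the $X$-valued integrand $D_1u(x_1,x')\Phi(x_1,x')$. Both points follow from the standard theory of Bochner integration, but it is here that some care is required.
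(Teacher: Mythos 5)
Your argument is essentially the paper's proof read contrapositively: the same auxiliary function $u(\xi)=f(\xi_1)$, the same observation that Lipschitz continuity of $f$ already yields the Difference Quotient Criterion, and the same tensor-product test function $\varphi\otimes\psi$ combined with Proposition \ref{fundamental} to transfer (non-)membership in $W^{1,p}$ between $u$ and $f$. Your way of extracting the one-dimensional derivative, namely averaging $D_1u(x_1,\cdot)$ against $\psi$ and identifying $g(x_1)=\int D_1u(x_1,x')\psi(x')\,dx'$ as the distributional derivative of $f$, is in fact slightly cleaner than the paper's, which simply asserts that $D_1u(\xi)$ is independent of $\xi_2,\ldots,\xi_d$; your version needs only Fubini and the Minkowski integral inequality for Bochner integrals, both standard. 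The one point where the proposal falls short of the statement as given is the choice of domain: $\Omega$ is an arbitrary open set fixed at the start of the section, so in the contrapositive you must exhibit the failure of the criterion on that $\Omega$, not on a box of your choosing. This is repaired exactly as in the paper's direct argument: rescale $f$ (restricting to a subinterval on which it is still not a.e.\ differentiable) so that the corresponding cube sits compactly inside $\Omega$, and multiply $u$ by a cutoff $\chi\in\Cci(\Omega,\R)$ equal to $1$ on the cube; the truncated function is still bounded and Lipschitz in each variable, hence satisfies the criterion on $\Omega$, while membership in $W^{1,p}(\Omega,X)$ would restrict to the cube where $\chi\equiv1$ and reproduce your contradiction. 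With that adjustment your proof is complete and coincides in substance with the paper's.
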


\begin{proof}
It remains to show that the Difference Quotient Criterion implies the Radon-Nikodym Property. Let $f:I\rightarrow X$ be Lipschitz continuous. After scaling we may assume that $I^d\ssubset\Omega$. Let $u(\xi):=f(\xi_1)$. If we cut off $u$ outside a compact set in $\Omega$ without changing its values in $I^d$, then $u\in L^p(\Omega,X)$. Hence we may without loss of generality assume that $\Omega=I^d$. Since $f$ is Lipschitz continuous, $u$ satisfies the Difference Quotient Criterion, hence $u\in W^{1,p}(\Omega,X)$. Let $\varphi\in\Cci(I,\R)$ and let $\hat{\varphi}\in\Cci(I^{d-1},\R)$ such that $\int_{I^d}{\hat{\varphi}}=1$. Since $\psi(\xi):=\varphi(\xi_1)\cdot\hat{\varphi}(\xi_2,\ldots,\xi_d)\in\Cci(\Omega,\R)$ we have that
\begin{align*}
\int_{I}{f(t)\varphi'(t)\,dt}=\int_{\Omega}{u(\xi)\partial_1\psi(\xi)\,d\xi}=-\int_{\Omega}{D_1u(\xi)\psi(\xi)\,d\xi}.
\end{align*}
Since $D_1u(\xi)=D_1u(\xi_1)$ is independent of $\xi_2,\ldots,\xi_d$ we obtain that
\begin{align*}
\int_{I}{f(t)\varphi'(t)\,dt}=-\int_{I}{D_1u(t)\varphi(t)\,dt}
\end{align*}
where $D_1u\in L^p(\Omega,X)$. Hence $f\in W^{1,p}(I,X)$ is differentiable almost everywhere by Proposition \ref{fundamental} which finishes the proof.
\end{proof}

\begin{remark}
Hyt{\"o}nen, van Neerven, Veraar and Weis \cite[Proposition 2.81]{VanNeerven} give a different proof of Theorem \ref{RNP criterion other direction} independent of us.
\end{remark}

The difference quotient criterion Theorem \ref{RNP criterion other direction} yields our first mapping theorem.

\begin{corollary}\label{mapping corollary}
Let $1<p\leq\infty$, $\Omega\subset\R^d$ open, $X$ and $Y$ Banach spaces, $Y$ enjoying the Radon-Nikodym Property. Let $F:X\rightarrow Y$ be a Lipschitz continuous mapping such that $F(0)=0$ if $\Omega$ has infinite measure. Then $u\mapsto F\circ u$ defines a mapping
\begin{align*}
W^{1,p}(\Omega,X)\rightarrow W^{1,p}(\Omega,Y).
\end{align*}
\end{corollary}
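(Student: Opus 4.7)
The plan is to combine the Difference Quotient Criterion characterization from Theorem \ref{RNP criterion other direction} with the elementary Lipschitz estimate. Let $L$ denote the Lipschitz constant of $F$ and fix $u\in W^{1,p}(\Omega,X)$.

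First I would check that $F\circ u\in L^p(\Omega,Y)$. Since $\|F(u(\xi))\|_Y\leq L\|u(\xi)\|_X+\|F(0)\|_Y$, this is immediate when $\Omega$ has finite measure (the constant term is in $L^p$) and it is immediate under the hypothesis $F(0)=0$ when $\Omega$ has infinite measure.

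Next I would verify the Difference Quotient Criterion for $F\circ u$. By Lemma \ref{easydiffquotient} applied to $u$ there is a constant $C$ (namely $C=\max_j\|D_ju\|_{L^p(\Omega,X)}$) such that for every $\omega\ssubset\Omega$ and $|h|<\text{dist}(\omega,\partial\Omega)$,
\begin{align*}
\|u(\cdot+h\,e_j)-u\|_{L^p(\omega,X)}\leq C|h|.
\end{align*}
Applying $F$ pointwise and using the Lipschitz estimate $\|F(x_1)-F(x_2)\|_Y\leq L\|x_1-x_2\|_X$ gives
\begin{align*}
\|F(u(\cdot+h\,e_j))-F(u)\|_{L^p(\omega,Y)}\leq L\,\|u(\cdot+h\,e_j)-u\|_{L^p(\omega,X)}\leq LC|h|,
\end{align*}
so $F\circ u$ satisfies criterion (\ref{criterion}) with constant $LC$. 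Since $Y$ has the Radon-Nikodym Property and $1<p\leq\infty$, Theorem \ref{RNP implies criterion} yields $F\circ u\in W^{1,p}(\Omega,Y)$ together with the bound $\|D_j(F\circ u)\|_{L^p(\Omega,Y)}\leq LC$.

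There is no serious obstacle: the only subtlety is the case of infinite measure $\Omega$, where the assumption $F(0)=0$ is precisely what is needed to ensure $F\circ u\in L^p(\Omega,Y)$; everything else is a direct composition of Lemma \ref{easydiffquotient} with Theorem \ref{RNP implies criterion} via the Lipschitz bound.
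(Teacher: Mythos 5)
Your proposal is correct and is essentially the paper's own argument: the paper derives Corollary \ref{mapping corollary} directly from the Difference Quotient Criterion, i.e.\ by combining the estimate of Lemma \ref{easydiffquotient} with the Lipschitz bound for $F$ and then invoking Theorem \ref{RNP implies criterion} (the sufficiency direction, valid since $Y$ has the Radon-Nikodym Property and $1<p\leq\infty$), with the hypothesis $F(0)=0$ handling integrability when $\Omega$ has infinite measure. Nothing further is needed.
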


\section{Composition with Lipschitz Continuous Mappings in Spaces that have the Radon-Nikodym Property}\label{section Lipschitz}

In this section we will give an alternative proof to the last corollary that also includes the case $p=1$. It will also contain another characterization of the Radon-Nikodym Property via Sobolev spaces. The Radon-Nikodym Property can also be formulated as follows.

\begin{lemma}\label{RNP equivalence absolute}
A Banach space $X$ has the Radon-Nikodym Property if and only if every absolutely continuous function $f:I\rightarrow X$ is differentiable almost everywhere.
\end{lemma}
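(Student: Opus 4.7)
The plan is the following. One direction is trivial: every Lipschitz continuous function on an interval is absolutely continuous, so if every absolutely continuous $f:I\to X$ is differentiable almost everywhere, so is every Lipschitz one, which is the definition of RNP recalled in the paper.

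For the converse, assume $X$ has the RNP and let $f:I\to X$ be absolutely continuous. Since the assertion is local, I reduce to $I=[a,b]$ compact. The strategy is to reparametrize $f$ into a Lipschitz function, apply the RNP, and transfer differentiability back via the chain rule. Introduce the variation $V(t):=\sup\sum_{i}\|f(t_{i+1})-f(t_{i})\|_X$ taken over partitions of $[a,t]$. A standard argument, which carries over from the scalar case using only the triangle inequality in $X$, shows that $V$ is itself absolutely continuous and monotone increasing on $[a,b]$. Set $\phi(t):=V(t)+(t-a)$, so that $\phi:[a,b]\to[0,L]$ with $L:=\phi(b)$ is a strictly increasing AC function satisfying $\phi(t_{1})-\phi(t_{2})\geq t_{1}-t_{2}$ for $t_{1}>t_{2}$. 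Its inverse $\psi:[0,L]\to[a,b]$ is therefore Lipschitz with constant $\leq 1$, and $g:=f\circ\psi$ is Lipschitz too, since for $s_{1}>s_{2}$
\[
\|g(s_{1})-g(s_{2})\|_{X}\leq V(\psi(s_{1}))-V(\psi(s_{2}))\leq\phi(\psi(s_{1}))-\phi(\psi(s_{2}))=s_{1}-s_{2}.
\]

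By the RNP of $X$, $g$ is differentiable outside a null set $N\subset[0,L]$; since $\psi$ is Lipschitz, $\psi(N)$ is null in $[a,b]$. Being AC and monotone, $\phi$ is differentiable off some further null set $M$. For every $t\in[a,b]\setminus(M\cup\psi(N))$, $g$ is differentiable at $\phi(t)$ and $\phi$ is differentiable at $t$; since $\phi$ is strictly increasing, the standard chain rule yields that $f=g\circ\phi$ is differentiable at $t$ with $f'(t)=g'(\phi(t))\phi'(t)$. Hence $f$ is differentiable almost everywhere. The one step that requires genuine verification rather than measure-theoretic bookkeeping is the absolute continuity of the variation function $V$; this is a classical $\varepsilon$-$\delta$ argument using only the triangle inequality in $X$, and is the only place where structural information about $f$ itself enters, the RNP being applied only to the Lipschitz function $g$.
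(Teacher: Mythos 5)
Your argument is correct, and it is worth noting that the paper does not prove this lemma at all: it simply cites \cite[Proposition 1.2.4]{ArendtBatty}. What you have written is essentially the classical argument behind that cited result, namely reparametrization by the variation. The trivial direction (Lipschitz implies absolutely continuous) is fine, and in the converse direction all the key steps hold in an arbitrary Banach space: finiteness and absolute continuity of the variation function $V$ use only the triangle inequality (you should perhaps say explicitly that absolute continuity of $f$ on the compact interval forces $V(b)<\infty$, since that finiteness is silently used); $\phi(t)=V(t)+(t-a)$ is a continuous strictly increasing bijection onto $[0,L]$ whose inverse $\psi$ is $1$-Lipschitz; $g=f\circ\psi$ is $1$-Lipschitz, so the Radon--Nikodym Property applies to $g$ and not to $f$; Lipschitz maps of the line send null sets to null sets, so $\psi(N)$ is null and $t\notin\psi(N)$ is equivalent to $\phi(t)\notin N$ because $\psi$ is the inverse bijection of $\phi$; and the pointwise chain rule for $f=g\circ\phi$ needs only differentiability of $\phi$ at $t$ and of $g$ at $\phi(t)$ (strict monotonicity of $\phi$ is not actually needed there). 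The localization to compact subintervals is also harmless since almost everywhere differentiability is a local property. So your proposal supplies a complete, self-contained proof where the paper delegates to the literature, at the cost of having to verify the (standard, but nontrivial) fact that the variation of an absolutely continuous vector-valued function is itself absolutely continuous.
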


\begin{proof}
See \cite[Proposition 1.2.4]{ArendtBatty}.
\end{proof}

We first consider the one-dimensional case, in which Proposition \ref{fundamental} yields the result right away.

\begin{theorem}\label{Lipschitz1}
Let $1\leq p\leq\infty$ and let $I$ be an interval. Let $X,Y$ be Banach spaces such that $Y$ has the Radon-Nikodym Property and assume that $F:X\rightarrow Y$ is Lipschitz continuous. If $F(0)=0$ or $I$ is bounded then $F\circ u\in W^{1,p}(I,Y)$ for all $u\in W^{1,p}(I,X)$.
\end{theorem}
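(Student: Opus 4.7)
The plan is to pass through Proposition \ref{fundamental} at both the input and output side, using that Lipschitz maps preserve absolute continuity and, under RNP, collapse it to a pointwise bound on derivatives. So I would pick the absolutely continuous representative of $u\in W^{1,p}(I,X)$ afforded by Proposition \ref{fundamental}(ii), so that $u'\in L^p(I,X)$ exists a.e. For any partition $t_0<\ldots<t_n$ of a compact subinterval of $I$, the Lipschitz estimate gives
\begin{align*}
\sum_i\|F(u(t_{i+1}))-F(u(t_i))\|_Y\leq L\sum_i\|u(t_{i+1})-u(t_i)\|_X,
\end{align*}
so $F\circ u$ inherits absolute continuity on compact subintervals from $u$, with the modulus amplified by the Lipschitz constant $L$.

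Next, since $Y$ has the Radon-Nikodym Property, Lemma \ref{RNP equivalence absolute} immediately yields that $F\circ u$ is differentiable almost everywhere on $I$. At any point $t$ where both $u'(t)$ and $(F\circ u)'(t)$ exist (a full-measure set, being the intersection of two full-measure sets), passing $h\to 0$ in $\bigl\|h^{-1}(F(u(t+h))-F(u(t)))\bigr\|_Y\leq L\,\bigl\|h^{-1}(u(t+h)-u(t))\bigr\|_X$ produces the pointwise bound $\|(F\circ u)'(t)\|_Y\leq L\|u'(t)\|_X$ a.e., whence $(F\circ u)'\in L^p(I,Y)$.

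To close the loop via Proposition \ref{fundamental}, it remains to check $F\circ u\in L^p(I,Y)$, and this is where the dichotomy in the hypotheses enters. From $\|F(u(t))\|_Y\leq L\|u(t)\|_X+\|F(0)\|_Y$: if $F(0)=0$, the right-hand side is $L\|u\|_X\in L^p(I,\mathbb{R})$; if instead $I$ is bounded, then $\|F(0)\|_Y$ is constant on an interval of finite measure, hence in $L^p(I,Y)$, and again $L\|u\|_X\in L^p$. Proposition \ref{fundamental} applied in the reverse direction then gives $F\circ u\in W^{1,p}(I,Y)$ with distributional derivative equal a.e.\ to the classical derivative already controlled above. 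There is no real obstacle in this argument; the only point requiring mild care is that the derivative bound must be taken at points where both $u'$ and $(F\circ u)'$ exist, and the dichotomy on $F(0)$ versus boundedness of $I$ is genuinely needed to absorb the additive constant $\|F(0)\|_Y$ into $L^p$.
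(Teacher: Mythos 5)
Your argument is correct and follows essentially the same route as the paper: pass to the absolutely continuous representative via Proposition \ref{fundamental}, note that the Lipschitz composition is again absolutely continuous, invoke the Radon-Nikodym Property (Lemma \ref{RNP equivalence absolute}) for a.e.\ differentiability, bound $\|(F\circ u)'\|_Y\leq L\|u'\|_X$, and return through Proposition \ref{fundamental}. Your explicit verification that $F\circ u\in L^p(I,Y)$ via the dichotomy $F(0)=0$ or $I$ bounded is a detail the paper leaves implicit, but it is not a different approach.
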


\begin{proof}
Let $u\in W^{1,p}(I,X)$. Proposition \ref{fundamental} yields that $F\circ u$ is absolutely continuous and hence differentiable almost everywhere by the preceding lemma. By Proposition \ref{fundamental} it remains to show that the derivative of $F\circ u$ is an element of $L^p(I,Y)$. Let $L$ be the Lipschitz constant of $F$, then
\begin{align*}
\|(F\circ u)'(t)\|_X&=\lim_{h\rightarrow0}\frac{\|F(u(t+h))-F(u(t))\|_X}{|h|}\\
&\leq \limsup_{h\rightarrow0} L \frac{\|u(t+h)-u(t)\|_X}{|h|}=L\|u'(t)\|_X
\end{align*}
for almost all $t\in I$, which proves the claim.
\end{proof}

To show Theorem \ref{Lipschitz1} for general domains we will need a higher-dimensional version of Proposition \ref{fundamental}. It was Beppo Levi \cite{Levi} who introduced Sobolev spaces by considering functions which are absolutely continuous on lines. For this reason we allow ourselves to introduce the following terminology:\\

A function $u:\R^d\rightarrow X$ has the \emph{BL-property} if for each $j\in\{1,\ldots,d\}$ for almost all $(x_1,\ldots,x_{j-1},x_{j+1},\ldots,x_d)\in\R^{d-1}$ the function
\begin{align*}
u_j:\,&\R\rightarrow X\\
&t\mapsto u((x_1,\ldots,x_{j-1},t,x_{j+1},\ldots,x_d))
\end{align*}
is absolutely continuous and differentiable almost everywhere. As a consequence of Fubini's Theorem, the partial derivatives $\partial_ju$ of $u$ exist almost everywhere on $\R^d$. Of course, if $X$ has the Radon-Nikodym Property the condition that $u_j$ is differentiable almost everywhere is automatically satisfied if $u_j$ is absolutely continuous.

\begin{theorem}\label{lines}
Let $\Omega\subset\R^d$ be open $1\leq p\leq\infty$.
\begin{compactenum}[(i)]
\item Let $u\in W^{1,p}(\Omega,X)$. Then for each $\omega\ssubset\Omega$ there exists a function $u^*:\R^d\rightarrow X$ which has the BL-property such that $u^*=u$ almost everywhere on $\omega$. Moreover $\partial_ju^*=D_ju$ almost everywhere on $\omega$.
\item Conversely, let $u\in L^p(\Omega,X)$ and $c\geq0$. Assume that for each $\omega\ssubset\Omega$ there exists a function $u^*:\R^d\rightarrow X$ with the BL-property such that $u^*=u$ almost everywhere on $\omega$ and $\|\partial_ju^*\|_{L^p(\Omega,X)}\leq c$ for each $j\in\{1,\ldots,d\}$. Then $u\in W^{1,p}(\Omega,X)$.
\end{compactenum}
\end{theorem}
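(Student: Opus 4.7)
For part (i), the strategy is the classical mollification-plus-Fubini approach. Given $\omega \ssubset \omega' \ssubset \Omega$, pick a cutoff $\chi \in \Cci(\Omega, \R)$ equal to $1$ near $\bar\omega$ and supported in $\omega'$, so that $v := \chi u$ extends by zero to a function in $W^{1,p}(\R^d, X)$. Its mollifications $v_n := v \ast \rho_{1/n}$ are smooth, compactly supported, and converge to $v$ in $W^{1,p}(\R^d, X)$. After passing to a subsequence I may arrange that $v_n \to v$ and $\partial_j v_n \to D_j v$ almost everywhere on $\R^d$ for every $j$ and, applying Fubini to $\|v_n - v\|_{L^p}^p$ and to $\|\partial_j v_n - D_j v\|_{L^p}^p$, that for each $j$ and almost every line $\ell$ parallel to $e_j$ both $v_n|_\ell \to v|_\ell$ and $(v_n|_\ell)' \to D_j v|_\ell$ in $L^p(\ell, X)$. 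Defining $u^*(x)$ to be $\lim_n v_n(x)$ where this limit exists in $X$ and $0$ otherwise, on each good line the smooth functions $v_n|_\ell$ converge in $W^{1,p}(\R, X)$ to a function that, by Proposition \ref{fundamental}, has an absolutely continuous and a.e.\ differentiable representative with derivative $D_j v|_\ell$; the $W^{1,p}$-convergence of smooth functions forces uniform convergence on compact subintervals (by integrating derivative errors from a good base point), so $u^*|_\ell$ coincides with this continuous representative on all of $\ell$. Hence $u^*$ has the BL-property and $\partial_j u^* = D_j v = D_j u$ a.e.\ on $\omega$.

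For part (ii), the goal is to verify the integration-by-parts identity defining the distributional derivative. For $\omega \ssubset \Omega$ and $\varphi \in \Cci(\omega, \R)$, pick the asserted $u^*$; since $u = u^*$ a.e.\ on $\omega$, Fubini rewrites $\int_\omega u\,\partial_j\varphi\,dx$ as an iterated integral over lines parallel to $e_j$, and on almost every such line $u^*$ is absolutely continuous and a.e.\ differentiable, so the one-dimensional integration by parts against the compactly supported test slice yields $\int_\omega u\,\partial_j\varphi\,dx = -\int_\omega (\partial_j u^*)\varphi\,dx$. Thus $(\partial_j u^*)|_\omega$ is the distributional derivative of $u$ on $\omega$. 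Applying this on an exhaustion $\omega_n \ssubset \omega_{n+1} \ssubset \Omega$ with $\bigcup_n \omega_n = \Omega$, with a choice $u^*_n$ per $\omega_n$, uniqueness of distributional derivatives glues the local derivatives to a single function $g_j$ on $\Omega$, and the uniform bound $\|g_j\|_{L^p(\omega_n, X)} \leq \|\partial_j u^*_n\|_{L^p(\Omega, X)} \leq c$ combined with monotone convergence yields $g_j \in L^p(\Omega, X)$ with $D_j u = g_j$.

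The main obstacle lies in justifying the one-dimensional integration by parts invoked in (ii): a function $f : \R \to X$ that is only absolutely continuous and differentiable almost everywhere need not satisfy the fundamental theorem of calculus when $X$ lacks the Radon-Nikodym Property, since then nontrivial absolutely continuous functions with a.e.-vanishing derivative exist. The BL-property nevertheless supplies \emph{both} absolute continuity and a.e.\ differentiability, which together suffice: for each $x' \in X'$ the scalar $x' \circ f$ is absolutely continuous and hence satisfies the scalar FTC, while $(x' \circ f)'(s) = x'(f'(s))$ wherever $f$ is differentiable, so a Hahn-Banach argument recovers $f(b) - f(a) = \int_a^b f'(s)\,ds$ in $X$, from which the integration by parts follows.
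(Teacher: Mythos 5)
Your argument is correct and follows essentially the same route as the paper, which simply remarks that the classical real-valued proof (Mal\'y--Ziemer, i.e.\ the mollification-plus-Fubini/ACL argument you reproduce) carries over; in particular your treatment of the only genuinely vector-valued point --- that absolute continuity together with a.e.\ differentiability yields the fundamental theorem of calculus by a Hahn--Banach argument, with no Radon--Nikodym hypothesis --- is exactly the right observation. One small repair: for $p=\infty$ the claim that $\rho_{1/n}\ast v\to v$ in $W^{1,\infty}(\R^d,X)$ fails in general, but since $v=\chi u$ is compactly supported you can run part (i) with any finite exponent $q$ instead (the conclusions are a.e.\ pointwise statements on $\omega$ and do not involve $p$), while in part (ii) the $L^\infty$ bound passes to the glued derivative trivially.
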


\begin{proof}
This can be proven analogously to the real-valued case, see \cite[Theorem 1.41]{MalyZiemer}.
\end{proof}

Combining Theorem \ref{lines} with the one-dimensional case we obtain a proof of Corollary \ref{mapping corollary} which also includes the case $p=1$:

\begin{theorem}\label{Lipschitz}
Suppose that $X,Y$ are Banach spaces such that $Y$ has the Radon-Nikodym Property and let $1\leq p\leq\infty$. Let $F:X\rightarrow Y$ be Lipschitz continuous and assume that $F(0)=0$ if $\Omega$ has infinite measure. Then $F\circ u\in W^{1,p}(\Omega,Y)$ for any $u\in W^{1,p}(\Omega,X)$.
\end{theorem}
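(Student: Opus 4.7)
The plan is to reduce to the one-dimensional result Theorem \ref{Lipschitz1} via the characterization of $W^{1,p}(\Omega,Y)$ by the BL-property from Theorem \ref{lines}. First I would handle the $L^p$ part separately: since $F$ is Lipschitz with constant $L$, one has $\|F(x)\|_Y\leq\|F(0)\|_Y+L\|x\|_X$, which combined with $u\in L^p(\Omega,X)$ shows $F\circ u\in L^p(\Omega,Y)$ when $\Omega$ has finite measure; when $\Omega$ has infinite measure the hypothesis $F(0)=0$ gives $\|F(u(\xi))\|_Y\leq L\|u(\xi)\|_X$ and the same conclusion.

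Next I would fix $\omega\ssubset\Omega$ and apply Theorem \ref{lines}(i) to $u\in W^{1,p}(\Omega,X)$ to obtain a representative $u^*:\R^d\to X$ with the BL-property such that $u^*=u$ a.e.\ on $\omega$ and $\partial_j u^*=D_ju$ a.e.\ on $\omega$ for each $j$. I then claim that $F\circ u^*:\R^d\to Y$ also has the BL-property. For each direction $j$, for almost every $(x_1,\ldots,x_{j-1},x_{j+1},\ldots,x_d)\in\R^{d-1}$ the slice $t\mapsto u^*(x_1,\ldots,t,\ldots,x_d)$ is absolutely continuous, hence so is its composition with the Lipschitz map $F$; and since $Y$ has the Radon-Nikodym Property, Lemma \ref{RNP equivalence absolute} implies this composed slice is differentiable almost everywhere. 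Thus $F\circ u^*$ has the BL-property, and its partial derivatives exist a.e.\ on $\R^d$.

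The crucial pointwise inequality is exactly the computation from the proof of Theorem \ref{Lipschitz1}: wherever both $\partial_j u^*(\xi)$ and $\partial_j(F\circ u^*)(\xi)$ exist, we have
\begin{align*}
\|\partial_j(F\circ u^*)(\xi)\|_Y\leq L\,\|\partial_j u^*(\xi)\|_X.
\end{align*}
Integrating on $\omega$ and using $\partial_j u^*=D_j u$ a.e.\ on $\omega$ yields $\|\partial_j(F\circ u^*)\|_{L^p(\omega,Y)}\leq L\|D_ju\|_{L^p(\Omega,X)}=:c_j$. Since $c_j$ is independent of $\omega$, Theorem \ref{lines}(ii) (applied with $c=\max_j c_j$) then gives $F\circ u\in W^{1,p}(\Omega,Y)$, with the further bound $\|D_j(F\circ u)\|_{L^p(\Omega,Y)}\leq L\|D_j u\|_{L^p(\Omega,X)}$.

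I do not anticipate serious obstacles; the argument is essentially a dimensional reduction, and every ingredient is already in place (the BL-characterization, the one-dimensional chain rule estimate, and the preservation of absolute continuity under Lipschitz composition). The only point that requires care is ensuring that the differentiability a.e.\ of the slices of $F\circ u^*$ is not assumed but proved, which is precisely where the Radon-Nikodym Property of $Y$ enters through Lemma \ref{RNP equivalence absolute}; this is the step that would fail without the RNP hypothesis and motivates its necessity (as shown in the preceding section).
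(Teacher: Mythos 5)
Your proposal is correct and follows exactly the route the paper intends: it combines the BL-property characterization of Theorem \ref{lines} with the one-dimensional ingredients (preservation of absolute continuity under Lipschitz composition, a.e.\ differentiability via Lemma \ref{RNP equivalence absolute}, and the pointwise difference-quotient estimate from the proof of Theorem \ref{Lipschitz1}), which is precisely the argument the paper leaves as a sketch before stating Theorem \ref{Lipschitz}. The only cosmetic point is the ambiguity in where the $L^p$-bound in Theorem \ref{lines}(ii) is taken, but that ambiguity is in the paper's own statement and your uniform-in-$\omega$ bound $L\|D_ju\|_{L^p(\Omega,X)}$ is the intended reading.
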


We give a special example, that will be of interest throughout the rest of this article.

\begin{corollary}\label{norm W1p}
Let $1\leq p\leq\infty$. If $u\in W^{1,p}(\Omega,X)$, then $\|u\|_X\in W^{1,p}(\Omega,\R)$.
\end{corollary}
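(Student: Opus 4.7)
The plan is to apply Theorem~\ref{Lipschitz} with $Y=\R$ and the specific Lipschitz map $F:X\to\R$ defined by $F(x):=\|x\|_X$. After a few routine verifications, the corollary falls out immediately as a specialization.

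First, I would verify that $F$ is Lipschitz continuous. This is the reverse triangle inequality
\begin{align*}
|F(x)-F(y)|=\bigl|\|x\|_X-\|y\|_X\bigr|\leq\|x-y\|_X,
\end{align*}
so $F$ has Lipschitz constant at most $1$. In addition, $F(0)=\|0\|_X=0$, which takes care of the side condition in Theorem~\ref{Lipschitz} in the case when $\Omega$ has infinite measure.

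Second, I would note that $Y=\R$ is finite-dimensional and therefore enjoys the Radon-Nikodym Property: every Lipschitz (indeed, every absolutely continuous) function from an interval to $\R$ is differentiable almost everywhere by the classical theorem of Lebesgue. Equivalently, this is the scalar Radon-Nikodym theorem applied via Lemma~\ref{RNP equivalence absolute}.

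With these two observations in hand, Theorem~\ref{Lipschitz} applies directly to yield $\|u\|_X=F\circ u\in W^{1,p}(\Omega,\R)$ for every $u\in W^{1,p}(\Omega,X)$ and every $1\leq p\leq\infty$. There is no real obstacle here; the proof is a one-line consequence of the preceding theorem applied to a particularly natural Lipschitz function, and the only bookkeeping point---the distinction between finite- and infinite-measure domains $\Omega$---is trivialized by $F(0)=0$.
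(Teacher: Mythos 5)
Your proposal is correct and is exactly the paper's intended argument: the corollary is stated as a special case of Theorem~\ref{Lipschitz}, obtained by taking $Y=\R$ (which has the Radon-Nikodym Property) and $F=\|\cdot\|_X$, which is $1$-Lipschitz by the reverse triangle inequality and satisfies $F(0)=0$. Nothing further is needed.
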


\begin{remark}
Pelczynski and Wojciechowski have given another proof to Corollary \ref{norm W1p}, see \cite[Theorem 1.1]{PelczynskiWojciechowski}. This important example will play a major role in many results we will prove about the spaces $W^{1,p}(\Omega,X)$. We will extend it in later sections by computing the distributional derivative of $\|u\|_X$ and showing that $\|\cdot\|_X:W^{1,p}(\Omega,X)\rightarrow W^{1,p}(\Omega,\R)$ is continuous.
\end{remark}

As in the last section, we obtain the converse of Theorem \ref{Lipschitz} -- and hence a characterization of the Radon-Nikodym Property.

\begin{theorem}
Let $\Omega$ have finite measure and let $X,Y$ be Banach spaces, $X\neq\{0\}$. If $Y$ has the property that every Lipschitz continuous $F:X\mapsto Y$ gives rise to a mapping $u\mapsto F\circ u$ from $W^{1,p}(\Omega,X)$ to $W^{1,p}(\Omega,Y)$, then $Y$ has the Radon-Nikodym Property.
\end{theorem}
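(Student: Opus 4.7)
The plan is to show that any Lipschitz continuous function $f:I\to Y$ is differentiable almost everywhere; by Lemma \ref{RNP equivalence absolute} this gives the Radon-Nikodym Property. A standard exhaustion reduces the problem to showing differentiability a.e.\ on an arbitrary compact subinterval $[a,b]\subset I$. The key idea is to encode $f$ as the trace along the first coordinate direction of a composition $F\circ u$ on a small cube inside $\Omega$, and then recover the regularity of $f$ from the BL-property provided by Theorem \ref{lines}.

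For the construction: since $\Omega$ is open and nonempty, one can fix a cube $Q\ssubset\Omega$ which, after translation, has the form $Q=(0,\delta)^d$. Using $X\neq\{0\}$ and the Hahn-Banach theorem, pick $x_0\in X$ and $x'\in X'$ with $\|x_0\|_X=\|x'\|_{X'}=x'(x_0)=1$. Let $\tilde f:\R\to Y$ be the Lipschitz function obtained by first rescaling $f$ via $t\mapsto f(a+(b-a)t/\delta)$ on $[0,\delta]$ and extending constantly outside; $\tilde f$ is Lipschitz on all of $\R$. Define $F:X\to Y$ by $F(x):=\tilde f(x'(x))$, which is Lipschitz since $x'$ has norm one. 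Choose a cutoff $\chi\in C_c^\infty(\Omega,[0,1])$ with $\chi\equiv 1$ on a neighborhood of $\overline Q$, and set $u(\xi):=\chi(\xi)\xi_1 x_0$, which is smooth with compact support, hence in $W^{1,p}(\Omega,X)$. A direct computation gives $(F\circ u)(\xi)=\tilde f(\xi_1)=f(a+(b-a)\xi_1/\delta)$ for $\xi\in Q$.

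By hypothesis, $v:=F\circ u\in W^{1,p}(\Omega,Y)$, so $v|_Q\in W^{1,p}(Q,Y)$. Theorem \ref{lines}(i), applied to an exhaustion of $Q$ by subsets $\omega\ssubset Q$, provides a representative $v^*$ with the BL-property. Thus for almost every $(\xi_2,\ldots,\xi_d)$, the line function $\xi_1\mapsto v^*(\xi_1,\xi_2,\ldots,\xi_d)$ is absolutely continuous on $(0,\delta)$, differentiable a.e., and equals $\tilde f(\xi_1)$ for almost every $\xi_1$. Since $\tilde f$ is continuous (being Lipschitz) and the BL-representative is continuous, agreement on a set of full measure forces agreement everywhere on $(0,\delta)$. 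Therefore $\tilde f$ is differentiable a.e.\ on $(0,\delta)$, and rescaling transports this to differentiability of $f$ a.e.\ on $(a,b)$.

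The main obstacle is purely conceptual rather than technical: one has to recognize that composition against a suitable affine function $x\mapsto x'(x)$ together with a compactly supported affine-in-$\xi_1$ choice of $u$ reduces the seemingly high-dimensional hypothesis to a statement about lines, at which point Theorem \ref{lines} and the Lipschitz (hence continuous) nature of $\tilde f$ do the rest. The verification that $u\in W^{1,p}(\Omega,X)$, the Lipschitz estimate for $F$, and the upgrade from a.e.\ equality to pointwise equality on lines are all straightforward.
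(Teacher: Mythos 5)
Your proposal is correct and takes essentially the same route as the paper: compose the affine map $u(\xi)=\varphi(\xi)\,\xi_1 x_0$ with $F(x)=f(\langle x,x'\rangle)$, invoke the hypothesis to get $F\circ u\in W^{1,p}(\Omega,Y)$, and use Theorem \ref{lines} to obtain a.e.\ differentiability along lines in the $e_1$-direction, which transfers to $f$. Your explicit Lipschitz extension of $f$ and the continuity argument upgrading a.e.\ agreement on lines to everywhere agreement are just careful spellings-out of steps the paper leaves implicit.
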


\begin{proof}
Let $f:I\rightarrow Y$ be Lipschitz continuous where without loss of generality $I=[0,1]$. Further choose a vector $x_0\in X$ and a functional $x_0'\in X'$ such that $\langle x_0,x_0'\rangle=1$. Define
\begin{align*}
F:\ &X\rightarrow Y
\end{align*}
by $F(x):=f(\langle x,x_0'\rangle)$. Then $F$ is Lipschitz continuous. We may assume that $I^d\subset\Omega$. Define $\tilde{u}:\Omega\rightarrow X$ via
\begin{align*}
\tilde{u}(\xi):=\xi_1\cdot x_0.
\end{align*}
Choose a function $\varphi\in\Cci(\Omega,\R)$ such that $\varphi_{I^d}\equiv 1$ and let $u:=\varphi\tilde{u}\in W^{1,p}(\Omega,X)$. By assumption we have that $F\circ u\in W^{1,p}(\Omega,Y)$ hence by Theorem \ref{lines} its partial derivative with repect to $\xi_1$ exists for almost all $\xi\in I^d$. But for any such $\xi$ this derivative is equal to $\frac{d}{d\xi_1}f(\xi_1)$, hence $f$ is differentiable almost everywhere on $I$. It follows that $Y$ has the Radon-Nikodym Property.
\end{proof}

\section{Composition with One-Sided Gateaux Differentiable Lipschitz Continuous Mappings}\label{gateaux}

In this section we want to drop the Radon-Nikodym Property. If we do so, then there exist Lipschitz continuous mappings $F$ and distributionally differentiable functions $u$ such that the composition $F\circ u$ is not distributionally differentiable. For this reason we add the assumptions that the mapping $F$ is one-sided Gateaux differentiable and show, that this is sufficient for $F\circ u$ to be distributionally differentiable. We will also prove a chain rule in this setting and explicitly compute the distributional derivatives for the cases that $F$ is a norm or that $F$ is a lattice operation.\\

Let $X,Y$ be Banach spaces. We say that a function $F:X\rightarrow Y$ is \emph{one-sided Gateaux differentiable} at $x$ if the right-hand limit
\begin{align*}
D_{v}^{+}F(x):=\lim_{t\rightarrow0+}\ot(F(x+tv)-F(x))
\end{align*}
exists for every direction $v\in X$. In this case, the left-hand limit
\begin{align*}
D_{v}^{-}F(x):=\lim_{t\rightarrow0+}\frac{1}{-t}(F(x-tv)-F(x))
\end{align*}
exists as well and is given by $D_{v}^{-}F(x)=-D_{-v}^+F(x)$. Let $\Omega\subset\R^d$ be open. For $u:\Omega\rightarrow X$ and $\xi\in\Omega$ we denote by
\begin{align*}
\partial_j^\pm u:=\lim_{t\rightarrow0\pm}\frac{u(\xi+te_j)-u(\xi)}{t}
\end{align*}
the one-sided partial derivatives if they exist.

\begin{lemma}[Chain Rule]\label{chain rule}
Let $X,Y$ be Banach spaces, $u:\Omega\rightarrow X$, $j\in1,\ldots,d$ and $\xi\in\Omega$ such that the partial derivative $\partial_ju(\xi)$ exists. If $F:X\rightarrow Y$ is one-sided Gateaux differentiable, then we have
\begin{align*}
\partial_j^{\pm}F\circ u(\xi)=D_{\partial_ju(\xi)}^{\pm}F(u(\xi)).
\end{align*}
\end{lemma}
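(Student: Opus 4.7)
The plan is to compute the one-sided partial derivative $\partial_j^+ (F\circ u)(\xi)$ directly from the definition by splitting the difference quotient into a piece that is handled by the one-sided Gateaux derivative of $F$ at $u(\xi)$ and a remainder piece that vanishes because $u$ has an ordinary partial derivative at $\xi$. Concretely, for $t>0$ small enough that $\xi+te_j\in\Omega$, I would write
\[
\frac{F(u(\xi+te_j))-F(u(\xi))}{t}
=\frac{F(u(\xi)+t\,\partial_j u(\xi))-F(u(\xi))}{t}
+\frac{F(u(\xi+te_j))-F(u(\xi)+t\,\partial_j u(\xi))}{t}.
\]

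The first summand converges, by the very definition of the one-sided Gateaux derivative, to $D^+_{\partial_j u(\xi)}F(u(\xi))$ as $t\to 0+$. For the second summand I would invoke the Lipschitz continuity of $F$, which is the standing assumption of this section: writing $L$ for the Lipschitz constant of $F$, we obtain
\[
\left\|\frac{F(u(\xi+te_j))-F(u(\xi)+t\,\partial_j u(\xi))}{t}\right\|_Y
\;\le\; L\cdot\frac{\|u(\xi+te_j)-u(\xi)-t\,\partial_j u(\xi)\|_X}{t},
\]
and the existence of $\partial_j u(\xi)$ forces the right-hand side to tend to $0$ as $t\to 0+$. Combining these two observations gives $\partial_j^+(F\circ u)(\xi)=D^+_{\partial_j u(\xi)}F(u(\xi))$.

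For the $-$ case I would repeat the argument verbatim with $t\to 0-$ (or, equivalently, replace $e_j$ by $-e_j$ and $\partial_j u(\xi)$ by $-\partial_j u(\xi)$, then use the identity $D^-_{v}F(x)=-D^+_{-v}F(x)$ noted just above the lemma statement).

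The only genuinely subtle point here is the control of the remainder term: because $F$ is merely Gateaux differentiable, there is no a-priori first-order approximation of $F$ around $u(\xi)$ that could absorb the $o(t)$ error coming from $u$. This is exactly where the Lipschitz hypothesis of the section is essential, and it is what makes the argument go through in a general Banach space $X$ without any Radon--Nikodym assumption. I do not expect any further obstacle; once the splitting above is set up, both terms are handled by a direct application of their defining properties.
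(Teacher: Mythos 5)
Your proposal is correct and is essentially the paper's own argument: the same splitting of the difference quotient into the Gateaux-derivative term and a remainder controlled by the Lipschitz constant $L$ of $F$ (which, as you note, is the standing hypothesis of the section, implicitly used in the paper's proof as well). The treatment of the left-sided case also matches the paper.
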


\begin{proof}
Let $t>0$, then
\begin{align*}
&\ot\left(F(u(\xi+te_j))-F(u(\xi))\right)\\
=&\ot\left(F(u(\xi+te_j))-F(u(\xi)+t\partial_ju(\xi))\right)\\
&+\ot\left(F(u(\xi)+t\partial_ju(\xi))-F(u(\xi))\right).
\end{align*}
The first expression can be estimated by
\begin{align*}
\xot{L}\|u(\xi+te_j)-u(\xi)-t\partial_ju(\xi)\|_X\rightarrow 0\quad(t\rightarrow0)
\end{align*}
and the second expression converges to $D_{\partial_ju(\xi)}^{+}F(u(\xi))$ as claimed. The left-hand limit can be computed analogously.
\end{proof}

Of course if $F$ is one-sided Gateaux differentiable, in general the right- and left Gateaux derivatives $D_{v}^{+}F(x)$ and $D_{v}^{-}F(x)$ are different. However, if we compose $F$ with a Sobolev function $u$ something surprising happens:

\begin{theorem}\label{Gateaux derivative}
Let $1\leq p\leq\infty$ and $u\in W^{1,p}(\Omega,X)$. Suppose that $F:X\rightarrow Y$ is Lip\-schitz continuous and one-sided Gateaux differentiable and assume furthermore that $\Omega$ is bounded or that $F(0)=0$. Then $F\circ u \in W^{1,p}(\Omega,Y)$ and we have the chain rule
\begin{align*}
D_j(F\circ u)=D^{+}_{D_ju}F(u)=D^{-}_{D_ju}F(u)
\end{align*}
\end{theorem}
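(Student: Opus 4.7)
The plan is to reduce to one dimension via Theorem~\ref{lines} and then handle the one-dimensional case by a weak-to-strong integration argument, which sidesteps the absence of the Radon-Nikodym property on $Y$.

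First I would prove the one-dimensional version: if $u\in W^{1,p}(I,X)$ on an interval $I$ and $F$ is as in the theorem, then $F\circ u\in W^{1,p}(I,Y)$ with $(F\circ u)'(t)=D^+_{u'(t)}F(u(t))=D^-_{u'(t)}F(u(t))$ almost everywhere. By Proposition~\ref{fundamental} the function $u$ is absolutely continuous and differentiable almost everywhere, so $F\circ u$ is absolutely continuous by the Lipschitz continuity of $F$. Define the candidate derivative $g(t):=D^+_{u'(t)}F(u(t))$ at the full-measure set where $u'(t)$ exists; writing it as the pointwise limit $g(t)=\lim_{n\to\infty}n\bigl(F(u(t)+u'(t)/n)-F(u(t))\bigr)$ reveals strong measurability, and the Lipschitz bound $\|g(t)\|_Y\le L\|u'(t)\|_X$ gives $g\in L^p(I,Y)$. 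For any $y'\in Y'$ the scalar composition $y'\circ F\circ u$ is absolutely continuous, hence classically differentiable almost everywhere; Lemma~\ref{chain rule} identifies its derivative with $y'(g(t))$ at almost every such point, and the classical fundamental theorem of calculus gives
\[
y'\bigl(F(u(b))-F(u(a))\bigr) \;=\; \int_a^b y'(g(t))\,dt \;=\; y'\!\left(\int_a^b g(t)\,dt\right).
\]
Since this holds for every $y'\in Y'$, Hahn-Banach yields $F(u(b))-F(u(a))=\int_a^b g(t)\,dt$, so Proposition~\ref{fundamental}(iii) shows $F\circ u\in W^{1,p}(I,Y)$ with derivative $g$. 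Running the same argument with $D^-$ in place of $D^+$ delivers the second identity.

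Next I would pass to the higher-dimensional case via Theorem~\ref{lines}. Fix $\omega\ssubset\Omega$ and let $u^*:\R^d\to X$ be the representative with the BL-property provided by part (i). By Fubini applied to $\partial_j u^*$, for almost every line parallel to $e_j$ the slice $t\mapsto u^*(x_1,\dots,t,\dots,x_d)$ lies in $W^{1,p}$ on bounded intervals; the one-dimensional step then shows $F\circ u^*$ is absolutely continuous and differentiable on each such slice, so $v^*:=F\circ u^*$ inherits the BL-property. Moreover $\partial_j v^*=D^+_{\partial_j u^*}F(u^*)$ almost everywhere, bounded in norm by $L\|\partial_j u^*\|_X$, so $\|\partial_j v^*\|_{L^p(\Omega,Y)}\le L\|\partial_j u^*\|_{L^p(\Omega,X)}$. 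Together with the Lipschitz estimate $\|F(u)\|_Y\le\|F(0)\|_Y+L\|u\|_X$ (which places $F\circ u$ in $L^p(\Omega,Y)$ by the hypothesis on $\Omega$ or $F(0)$), Theorem~\ref{lines}(ii) yields $F\circ u\in W^{1,p}(\Omega,Y)$, and the chain rule formula persists by construction.

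The main obstacle is the one-dimensional step. Without the Radon-Nikodym property on $Y$ one cannot invoke Theorem~\ref{Lipschitz1}, because $Y$-valued absolutely continuous functions need not be differentiable almost everywhere. The trick to bypass this is to construct $g$ directly from the one-sided Gateaux derivative, verify the fundamental theorem of calculus \emph{weakly} by pairing with functionals $y'\in Y'$ and invoking the classical scalar theory, and then promote the identity to a Bochner-integral equality via Hahn-Banach. It is precisely the interplay between Lemma~\ref{chain rule} and the automatic differentiability of real-valued absolutely continuous functions that makes this weak-to-strong leap work without any assumption on $Y$.
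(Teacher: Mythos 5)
Your argument is correct, and its engine is the same weak-to-strong mechanism the paper uses -- pair with $y'\in Y'$, exploit that real-valued absolutely continuous functions are differentiable almost everywhere, identify the scalar derivative through Lemma \ref{chain rule} on a BL representative, and return to $Y$ by Hahn--Banach -- but you organize it genuinely differently. The paper never leaves $d$ dimensions: it checks that the candidate derivative $D^{\pm}_{D_ju}F(u)$ lies in $L^p(\Omega,Y)$, notes that $\langle F\circ u,y'\rangle=(y'\circ F)\circ u\in W^{1,p}(\Omega,\R)$ by Theorem \ref{Lipschitz} applied with target $\R$ (which has the Radon--Nikodym property), identifies $D_j\langle F\circ u,y'\rangle$ via Lemmas \ref{chain rule} and \ref{partial derivative direction}, and then verifies the integration-by-parts identity weakly before applying Hahn--Banach. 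You instead prove an honest one-dimensional chain rule from the FTC characterization in Proposition \ref{fundamental} and reassemble by slicing through Theorem \ref{lines}(ii); this makes the one-dimensional statement (valid with no hypothesis on $Y$) explicit and avoids invoking Theorem \ref{Lipschitz} altogether. Two steps you leave compressed should be spelled out. First, once Theorem \ref{lines}(ii) gives $F\circ u\in W^{1,p}(\Omega,Y)$, the identity $D_j(F\circ u)=D^{\pm}_{D_ju}F(u)$ does not follow ``by construction'': you need an identification of the distributional derivative with the pointwise slice derivative of your representative, e.g.\ via Lemma \ref{partial derivative direction}, or by noting that any two BL representatives of $F\circ u$ on $\omega$ agree on almost every line segment in $\omega$ by continuity, hence have equal partial derivatives there. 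Second, in applying Theorem \ref{lines}(ii) the bound should be taken on $\omega$, namely $\|\partial_j(F\circ u^*)\|_{L^p(\omega,Y)}\le L\|D_ju\|_{L^p(\Omega,X)}$, a constant independent of $\omega$, since part (i) controls $\partial_ju^*$ only on $\omega$ (likewise the slices of $u^*$ are a priori only locally $W^{1,1}$, which is all your one-dimensional step needs). These are repairable details rather than gaps in the idea.
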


We will need the following consequence of Theorem \ref{lines}.

\begin{lemma}\label{partial derivative direction}
Let $1\leq p\leq\infty$, $u\in W^{1,p}(\Omega,X)$ and $j\in\{1,\ldots,d\}$ such that $\partial_j^+u(\xi)$ exists almost everywhere. Then $D_ju=\partial^+_ju$ almost everywhere. The same holds for the left-sided derivative.
\end{lemma}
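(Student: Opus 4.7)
The plan is to reduce the statement to Theorem \ref{lines}(i). For a relatively compact $\omega \ssubset \Omega$, that theorem supplies a representative $u^*$ of $u$ possessing the BL-property and satisfying $u^* = u$ and $\partial_j u^* = D_j u$ almost everywhere on $\omega$. The BL-property ensures that for almost every $\xi \in \omega$ the function $t \mapsto u^*(\xi + t e_j)$ is absolutely continuous and classically differentiable at $t=0$; at any point of classical differentiability, the one-sided limits exist and coincide with the two-sided derivative. Hence $\partial_j^+ u^*(\xi) = \partial_j u^*(\xi) = D_j u(\xi)$ for almost every $\xi \in \omega$.

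What remains is to transfer this identity from $u^*$ back to the original representative $u$ appearing in the hypothesis. Let $N := \{\xi \in \omega : u(\xi) \neq u^*(\xi)\}$, a Lebesgue null set. By Fubini's theorem, for almost every $\xi \in \omega$ the line $\{\xi + t e_j : t \in \R\}$ meets $N$ in a set of one-dimensional measure zero. Combining these observations, for almost every $\xi \in \omega$ the following hold simultaneously: $u(\xi) = u^*(\xi)$; the line through $\xi$ in direction $e_j$ meets $N$ in a one-dimensional null set; and $u^*$ is classically differentiable in direction $e_j$ at $\xi$ with derivative $D_j u(\xi)$. At any such $\xi$ where furthermore the hypothesized limit $\partial_j^+ u(\xi)$ exists, a sequence $t_n \to 0+$ with $\xi + t_n e_j \notin N$ may be selected, yielding
\begin{align*}
\partial_j^+ u(\xi) &= \lim_{n \to \infty} \frac{u(\xi + t_n e_j) - u(\xi)}{t_n} \\
&= \lim_{n \to \infty} \frac{u^*(\xi + t_n e_j) - u^*(\xi)}{t_n} = \partial_j u^*(\xi) = D_j u(\xi).
\end{align*}
Exhausting $\Omega$ by an increasing sequence $\omega_n \ssubset \Omega$ finishes the proof for $\partial_j^+$; the argument for $\partial_j^-$ is symmetric, replacing $t_n \to 0+$ by $t_n \to 0-$ throughout.

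The main technical point is the mismatch between the hypothesis, which is a pointwise statement about some fixed but otherwise arbitrary representative of $u$, and Theorem \ref{lines}(i), which supplies a different, well-behaved representative $u^*$. The Fubini-plus-subsequence device in the second paragraph is precisely what bridges this gap; everything else is a direct consequence of the BL-property together with the elementary fact that existence of the classical derivative implies equality of the two one-sided derivatives.
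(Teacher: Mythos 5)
Your proof is correct and follows essentially the same route as the paper: invoke Theorem \ref{lines}(i) to obtain a BL-representative $u^*$ with $\partial_j u^* = D_j u$ a.e.\ on each $\omega \ssubset \Omega$, then use Fubini to select a sequence $t_n \downarrow 0$ avoiding the null set where $u \neq u^*$ (or where the relevant derivatives fail to exist), so that the hypothesized limit $\partial_j^+ u(\xi)$ can be evaluated along this sequence and identified with $\partial_j u^*(\xi) = D_j u(\xi)$, finishing with the same exhaustion of $\Omega$.
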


\begin{proof}
Without loss of generality $j=d$. For $\xi\in\R^d$ we write $\xi=(\hat{\xi},\xi_d)$ with $\hat{\xi}\in\R^{d-1}$ and $\xi_d\in\R$. Let $\omega_n\ssubset\Omega$ such that $\bigcup_{n\in\N}\omega_n=\Omega$. It suffices to show that $D_ju=\partial^+_ju$ almost everywhere on each $\omega_n$. Fix an $n\in\N$ and let $u^*$ be a representative of $u$ on $\omega_n$ as in Theorem \ref{lines}. Choose a null set $N\subset\omega_n$ such that $u^*=u$, the functions $\partial_d^+u$ and $\partial_du^*$ exist and $\partial_du^*=D_du$ on $\omega_n\backslash N$. By Fubini's Theorem the set
\begin{align*}
\omega_n':=\{\xi\in\omega_n\backslash N,(\hat{\xi},\xi_d+t_k)\notin N\textnormal{ for some sequence }t_k\downarrow0\}
\end{align*}
has full measure in $\omega_n$. For $\xi\in\omega_n'$ we choose a suitable sequence and obtain
\begin{align*}
\partial^+_du(\xi)&=\lim_{k\rightarrow\infty}\frac{u(\xi+t_ke_d)-u(\xi)}{t_k}\\
&=\lim_{k\rightarrow\infty}\frac{u^*(\xi+t_ke_d)-u^*(\xi)}{t_k}=\partial_du^*(\xi)=D_du(\xi),
\end{align*}
which finishes the proof.
\end{proof}

\begin{proof}[Proof of Theorem \ref{Gateaux derivative}]
At first note that $F\circ u\in L^p(\Omega,X)$. Fix a $j\in\{1,\ldots,d\}$. The function
\begin{align*}
\Omega&\rightarrow Y\\
\xi&\mapsto D^{\pm}_{D_j(u(\xi))}F(u(\xi))
\end{align*}
is measurable as a limit of a sequence of measurable functions. Since $F$ is Lipschitz continuous, it follows that $D^{\pm}_{D_j(u(\cdot))}F(u(\cdot))\in L^p(\Omega,Y)$. Fix $\varphi\in\Cci(\Omega,\R)$ and let $\omega\ssubset\Omega$ contain its support. We have to show that
\begin{align*}
\int_{\Omega}{F\circ u\ \partial_j\varphi}=-\int_{\Omega}{D^{\pm}_{D_ju(\xi)}F(u(\xi))\varphi(\xi)\,d\xi}.
\end{align*}
Choose a representative $u^*$ of $u$ on $\omega$ as in Theorem \ref{lines}. For every $y'\in Y'$ the function $y'\circ F$ is Lipschitz continuous, hence we have that $\langle F\circ u,y'\rangle\in W^{1,p}(\Omega,\R)$ by Theorem \ref{Lipschitz}. Further $u^*$ is partially differentiable almost everywhere on $\omega$ and $y'\circ F$ is Gateaux differentiable, hence $\partial_j^{\pm}\langle F\circ u^*(\xi),y'\rangle=\langle D^{\pm}_{\partial_ju^*(\xi)}F(u(\xi)),y'\rangle$ almost everywhere on $\omega$ by Lemma \ref{chain rule}. Applying Lemma \ref{partial derivative direction} to this we obtain
\begin{align*}
D_j\langle F\circ u,y'\rangle=\langle D^{\pm}_{D_ju(\cdot)}F(u(\cdot),y'\rangle
\end{align*}
almost everywhere on $\omega$. Thus
\begin{align*}
\left\langle\int_{\Omega}{F\circ u\ \partial_j\varphi},y'\right\rangle=\left\langle-\int_{\Omega}{D^{\pm}_{D_ju(\xi)}F(u(\xi))\varphi(\xi)\,d\xi},y'\right\rangle.
\end{align*}
Since $y'\in X'$ was arbitrary, we obtain the result.
\end{proof}

We give a first example: The function $F:=\|\cdot\|_X:X\rightarrow\R$ is one-sided Gateaux differentiable. The derivatives at $x\in X$ in direction $h\in X$ are given by
\begin{align*}
D_h^+F(x)=\sup \{\langle h,x'\rangle, x'\in J(x)\}
\end{align*}
and
\begin{align*}
D_h^-F(x)=\inf \{\langle h,x'\rangle, x'\in J(x)\},
\end{align*}
where $J(x)=\{x'\in S_{X'}(0,1),\langle x,x'\rangle=\|x\|_X\}$ is the duality map, see \cite[A-II, Remark 2.4]{Nagel}. We already know that $\|u\|_X\in W^{1,p}(\Omega,\R)$ by Corollary \ref{norm W1p}. Here we obtain a formula for the derivative, which will be crucial later.

\begin{theorem}\label{norm weak derivative}
Let $1\leq p\leq\infty$ and $u\in W^{1,p}(\Omega,X)$. Then $\|u\|_X\in W^{1,p}(\Omega,\R)$ and almost everywhere
\begin{align*}
D_j\|u(\xi)\|_X=\langle D_ju(\xi),J(u(\xi))\rangle,
\end{align*}
where by $\langle D_ju(\xi),J(u(\xi))\rangle$ we denote the unique value of $\langle D_ju(\xi),x'\rangle=\langle D_ju(\xi),y'\rangle$ for all $x',y'\in J(u(\xi))$.
\end{theorem}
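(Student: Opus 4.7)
The plan is to obtain this result as a direct corollary of the general chain rule Theorem~\ref{Gateaux derivative} applied to the map $F=\|\cdot\|_X\colon X\to\R$, using the explicit description of its one-sided Gateaux derivatives recalled above the statement.

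First I would verify that Theorem~\ref{Gateaux derivative} is applicable. The map $F$ is Lipschitz continuous with constant $1$ (by the reverse triangle inequality) and we have $F(0)=0$, so the hypothesis on $\Omega$ is automatically satisfied regardless of its measure. Moreover $F$ is one-sided Gateaux differentiable with
\begin{align*}
D_h^+F(x)=\sup\{\langle h,x'\rangle:x'\in J(x)\},\quad D_h^-F(x)=\inf\{\langle h,x'\rangle:x'\in J(x)\},
\end{align*}
as stated in the discussion preceding the theorem. Hence Theorem~\ref{Gateaux derivative} yields that $\|u\|_X=F\circ u\in W^{1,p}(\Omega,\R)$ together with the identity
\begin{align*}
D_j\|u\|_X=D^+_{D_ju}F(u)=D^-_{D_ju}F(u)
\end{align*}
almost everywhere on $\Omega$.

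Substituting the formulas for $D^\pm_hF(x)$ into this chain rule, I would conclude that for almost every $\xi\in\Omega$
\begin{align*}
\sup_{x'\in J(u(\xi))}\langle D_ju(\xi),x'\rangle=\inf_{x'\in J(u(\xi))}\langle D_ju(\xi),x'\rangle.
\end{align*}
Since the supremum and infimum of the affine functional $x'\mapsto\langle D_ju(\xi),x'\rangle$ over $J(u(\xi))$ agree, this functional must be constant on $J(u(\xi))$. This is precisely the claim that $\langle D_ju(\xi),x'\rangle=\langle D_ju(\xi),y'\rangle$ for all $x',y'\in J(u(\xi))$, so the expression $\langle D_ju(\xi),J(u(\xi))\rangle$ is unambiguously defined and coincides with the common value of $D_h^\pm F(u(\xi))$ for $h=D_ju(\xi)$, which in turn equals $D_j\|u(\xi)\|_X$.

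There is no real obstacle here; the entire content is the observation that the identity $D_j^+=D_j^-$ in the chain rule forces the duality map to act constantly on $D_ju(\xi)$ almost everywhere, which is the nontrivial part of the statement. The only point requiring a little care is measurability of $\xi\mapsto\langle D_ju(\xi),J(u(\xi))\rangle$, but this is inherited from the measurability of $D_j\|u\|_X$ to which it is equal almost everywhere.
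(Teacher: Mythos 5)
Your argument is correct and is essentially the paper's own: the theorem is obtained by applying Theorem \ref{Gateaux derivative} to $F=\|\cdot\|_X$ (Lipschitz with constant $1$, $F(0)=0$, one-sided Gateaux differentiable with $D_h^{\pm}F(x)$ given by the sup/inf over $J(x)$), and reading off from $D^+_{D_ju}F(u)=D^-_{D_ju}F(u)$ that $x'\mapsto\langle D_ju(\xi),x'\rangle$ is constant on $J(u(\xi))$ for almost every $\xi$. Nothing further is needed.
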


\begin{example}\label{normexample}
\begin{compactenum}[(a)]
\item Let $X$ be a Hilbert space, then
\begin{align*}
D_j\|u(\xi)\|_X=\begin{cases}
\left\langle D_ju(\xi),\frac{u(\xi)}{\|u(\xi)\|_X}\right\rangle&\text{ if } u(\xi)\neq0,\\
\qquad\quad0&\text{ if } u(\xi)=0.\\
\end{cases}
\end{align*}
In particular: if $X=\R$, then we find the well-known formula $D_j|u|=\sign(u)D_ju$.
\item Let $X=\ell^1$, $u(\xi)=(u_n(\xi))_{n\in\N}\in W^{1,p}(\Omega,X)$. Evaluating at the $n$-th coordinate we obtain that $u_n\in W^{1,p}(\Omega,\R)$ with $D_ju_n(\xi)=(D_ju(\xi))_n$ almost everywhere. It follows that
\begin{align*}
D_j\|u(\xi)\|_{\ell^1}=\sum_{n=1}^{\infty}{\sign(u_n(\xi))D_ju_n(\xi)}.
\end{align*}
\item Let $X=C(K)$ for a compact topological space $K$. For each $\xi\in \Omega$ let $k_{\xi}\in K$ such that $u(\xi)$ has a global extremum in $k_{\xi}$. Then
\begin{align*}
D_j\|u(\xi)\|_{C(K)}=\sign{\left(u(\xi)(k_{\xi})\right)}D_ju(\xi)(k_{\xi}).
\end{align*}
\end{compactenum}
\end{example}

\begin{corollary}\label{norm estimate}
Let $1\leq p\leq\infty$. For all $u\in W^{1,p}(\Omega,X)$ we have $\|u\|_X\in W^{1,p}(\Omega,\R)$ and the estimate $|D_j\|u(\xi)\|_X|\leq\|D_ju(\xi)\|_X$ is valid almost everywhere. In particular, $\|\,\|u\|_X\,\|_{W^{1,p}(\Omega,\R)}\leq \|u\|_{W^{1,p}(\Omega,X)}$.
\end{corollary}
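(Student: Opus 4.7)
The first assertion $\|u\|_X\in W^{1,p}(\Omega,\R)$ is nothing but a restatement of Corollary \ref{norm W1p}, so no new argument is needed for membership. The content of the corollary therefore lies in the pointwise bound, and for this I would invoke Theorem \ref{norm weak derivative}, which supplies the explicit formula
\begin{align*}
D_j\|u(\xi)\|_X=\langle D_ju(\xi),J(u(\xi))\rangle
\end{align*}
at almost every $\xi\in\Omega$.

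At any point $\xi$ with $u(\xi)\neq 0$, I would select any $x'\in J(u(\xi))$. Since the duality map takes its values in the unit sphere $S_{X'}(0,1)$, one has $\|x'\|_{X'}=1$, and the standard duality estimate yields
\begin{align*}
|D_j\|u(\xi)\|_X|=|\langle D_ju(\xi),x'\rangle|\leq\|D_ju(\xi)\|_X\,\|x'\|_{X'}=\|D_ju(\xi)\|_X.
\end{align*}
On the set $\{u=0\}$ the duality map is multi-valued, so the formula above is ambiguous; however, since $\|u\|_X$ is by Corollary \ref{norm W1p} a real-valued Sobolev function that vanishes on this set, the classical scalar fact that the distributional derivative of a Sobolev function is zero almost everywhere on its zero set gives $D_j\|u\|_X=0$ there, making the pointwise inequality trivial. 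This step, namely the handling of $\{u=0\}$, is the one mildly delicate point, but it is routine.

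For the concluding norm estimate, $\bigl\|\,\|u\|_X\bigr\|_{L^p(\Omega,\R)}=\|u\|_{L^p(\Omega,X)}$ holds identically, while integrating the pointwise bound (taking the essential supremum when $p=\infty$) gives $\|D_j\|u\|_X\|_{L^p(\Omega,\R)}\leq\|D_ju\|_{L^p(\Omega,X)}$ for each $j$. Summing over $j$ yields the asserted inequality $\bigl\|\,\|u\|_X\bigr\|_{W^{1,p}(\Omega,\R)}\leq\|u\|_{W^{1,p}(\Omega,X)}$.
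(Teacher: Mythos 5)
Your proof is correct and is essentially the paper's own argument: the corollary is an immediate consequence of Corollary \ref{norm W1p} together with the formula of Theorem \ref{norm weak derivative}, since $J(x)\subset S_{X'}(0,1)$ gives $|\langle D_ju(\xi),x'\rangle|\leq\|D_ju(\xi)\|_X$, and integration (or taking essential suprema) yields the norm inequality. Your separate treatment of the set $\{u=0\}$ is harmless but unnecessary: Theorem \ref{norm weak derivative} already asserts that its formula holds almost everywhere with a well-defined common value over all of $J(u(\xi))$, and at points with $u(\xi)=0$ every element of $J(0)=S_{X'}(0,1)$ still has norm one, so the same estimate applies there without invoking the scalar zero-set fact.
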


\begin{remark}
In the case $X=\R$ we have equality of these two expressions, see Example \ref{normexample} (a). However if $\text{dim}\,X\geq2$, then there exists no $C>0$ such that $|D_j\|u(\xi)\|_X|\geq C\|D_ju(\xi)\|_X$ for all $u\in W^{1,p}(\Omega,X)$. This can easily be seen by embedding the $\R^2$-valued function $[0,2\pi]\ni t\mapsto(\sin t,\cos t)$ into $W^{1,p}(\Omega,X)$.
\end{remark}

As a second example we will consider lattice operations. Let $E$ be a real Banach lattice. Typical examples are $L^p(\Omega,\R)$ for $1\leq p\leq\infty$ and $C(K)$ for compact $K$. We want to examine whether functions like $u^+$ are distributionally differentiable if $u\in W^{1,p}(\Omega,E)$. Since such lattice operations are Lipschitz continuous, the results of the previous sections imply that lattice operations leave the space $W^{1,p}(\Omega,E)$ invariant if $E$ has the Radon-Nikodym Property and in the case $E=\R$ this property is a fundamental tool for classical Sobolev spaces. However, $W^{1,p}(\Omega,E)$ may not be invariant under lattice operations in general as the following example shows.

\begin{example}
Let $u:(0,1)\rightarrow E=C([0,1],\R)$ be given by
\begin{align*}
u(t)(r)=r-t.
\end{align*}
We have that
\begin{align*}
u(t)=\text{id}_{(0,1)}+\int_0^t{u'(s)\,ds},
\end{align*}
where $u'(t)=-1_{(0,1)}\in L^p((0,1),E)$. Proposition \ref{fundamental} implies that $u\in W^{1,p}((0,1),E)$. The function $u^+$ is given by
\begin{align*}
u^+(t)(r)=
\begin{cases}
0 &\mbox{if } r<t \\ 
r-t & \mbox{if } r\geq t
\end{cases}
\end{align*}
If $u^+$ would be distributionally differentiable, we could evaluate the difference quotient pointwise since point evaluation is an element of $E'$. Hence the only candidate for the distributional derivative of $u^+$ is $(u^+)'(t)=-1_{(t,1)}\notin E$. Thus $u^+$ cannot be distributionally differentiable.\\
\end{example}

We now want to find Banach lattices $E$ for which $W^{1,p}(\Omega,E)$ is invariant under lattice operations but which are not required to have the Radon-Nikodym Property. For the reader's convenience we will summarize the necessary facts and refer to \cite{Nagel} for a short introduction to Banach lattices and to \cite{Schaefer} and \cite{MeyerNieberg} for further information.\\

Let $E$ be a real Banach lattice. A subset $A\subset E$ is called \emph{downwards directed} if for any two elements $x,y\in A$ there exists an element $z\in A$ such that $z\leq x,y$. It is called \emph{lower order bounded} if there exists $y\in E$ such that $y\leq x$ for all $x\in A$. The Banach lattice $E$ is said to have \emph{order continuous norm} if each lower order bounded downwards directed set $A$ converges, that is there exists an $x_0\in E$ such that $\inf_{x\in A}\|x-x_0\|_E=0$. We write $x_0=:\lim A$.\\

Now let $X$ be a Banach space. A function $F:X\rightarrow E$ is called \emph{convex} if
\begin{align*}
F(\lambda x+(1-\lambda)y)\leq\lambda F(x)+(1-\lambda)F(y)
\end{align*}
for all $x,y\in X$ and $0\leq\lambda\leq1$. This is equivalent to saying that $x'\circ F$ is convex for every $x'\in E'_+$. If $F$ is convex the difference quotients
\begin{align*}
t\mapsto\frac{F(x+ty)-F(x)}{t}
\end{align*}
define an increasing function from $\R\backslash\{0\}$ to $E$. Thus if $E$ has order continuous norm, then $F$ is one-sided Gateaux differentiable and $D_y^-F(x)\leq D_y^+F(x)$ for all $x,y\in E$. From Theorem \ref{Gateaux derivative} we deduce the following.

\begin{proposition}\label{convex}
Let $X$ be a Banach space and $u\in W^{1,p}(\Omega,X)$, where $1\leq p\leq\infty$. Let $F:X\rightarrow E$ be Lipschitz continuous and convex where $E$ is a Banach lattice with order continuous norm. If $\Omega$ is bounded or $F(0)=0$, then $F\circ u\in W^{1,p}(\Omega,E)$ with
\begin{align*}
D_j F\circ u=D^{+}_{D_ju}F(u)=D^{-}_{D_ju}F(u).
\end{align*}
\end{proposition}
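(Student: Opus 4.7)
The plan is to reduce this proposition directly to Theorem \ref{Gateaux derivative}. Looking at the hypotheses of that theorem, we are given $u\in W^{1,p}(\Omega,X)$, $F$ is Lipschitz continuous, and either $\Omega$ is bounded or $F(0)=0$. The only hypothesis not handed to us is one-sided Gateaux differentiability of $F$, so that is the sole point to verify before invoking Theorem \ref{Gateaux derivative}.

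To establish one-sided Gateaux differentiability, I would rely on the interplay between convexity and order continuity of the norm, which is exactly what the paragraph preceding the proposition sets up. Concretely, fix $x,y\in X$ and consider the map $t\mapsto\frac{F(x+ty)-F(x)}{t}$ for $t\in\R\setminus\{0\}$; by the standard three-slope argument for convex functions this is increasing in $t$ (the claim reduces, via application of $x'\in E'_+$, to the scalar case, where it is elementary). Consequently, as $t\downarrow 0^+$, the net $\bigl\{\frac{F(x+ty)-F(x)}{t}\bigr\}_{t>0}$ is a downwards directed family in $E$, and it is lower order bounded (e.g. by $\frac{F(x-y)-F(x)}{-1}$, which lies below every difference quotient at positive $t$). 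Order continuity of the norm on $E$ then ensures that the infimum exists in $E$ and that the family converges to it in norm; this is exactly $D_y^+F(x)$. The left-hand limit $D_y^-F(x)$ is produced by the same argument applied from below, yielding $D_y^-F(x)\leq D_y^+F(x)$.

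Once one-sided Gateaux differentiability is in hand, Theorem \ref{Gateaux derivative} applies verbatim, delivering $F\circ u\in W^{1,p}(\Omega,E)$ together with the identities $D_j(F\circ u)=D^+_{D_ju}F(u)=D^-_{D_ju}F(u)$. I expect the main (and essentially only) obstacle to be making the monotonicity/order-continuity step rigorous; the rest is bookkeeping. It is worth noting that the equality $D^+=D^-$ in the conclusion is nontrivial even though a priori $D^-_vF\leq D^+_vF$ with possibly strict inequality, but this is already packaged into Theorem \ref{Gateaux derivative}'s conclusion, so no further work is required here.
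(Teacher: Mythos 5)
Your proposal is correct and follows essentially the same route as the paper: the paragraph preceding the proposition establishes one-sided Gateaux differentiability of a convex Lipschitz map into a lattice with order continuous norm via the monotone difference quotients, and the proposition is then deduced directly from Theorem \ref{Gateaux derivative}. Your more detailed verification of the downwards directedness and lower order bound simply fills in the step the paper states without proof.
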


Next we consider the function $\vartheta:E\rightarrow E$ given by $\vartheta(v)=|v|$ which is convex. We need the following notation. Let $v\in E_+$, then
\begin{align*}
E_v:=\{w\in E,\exists n\in\N:|w|\leq nv\}
\end{align*}
denotes the \emph{ideal generated by $v$}. We set
\begin{align*}
|v|^d:=\{w\in E,|v|\wedge|w|=0\}.
\end{align*}
Assume that $E$ has order continuous norm. Then $E=E_{v^+}\oplus E_{v^-}\oplus|v|^d$. We denote by $P_{v^+},P_{v^-}$ and $P_{|v|^d}$ the projections along this decomposition. Define $\textnormal{sign }v:E\rightarrow E$ by
\begin{align*}
(\textnormal{sign }v)(w):=P_{v^+}w-P_{v^-}w.
\end{align*}

\begin{example}
Let $E=L^p(\Omega)$ with $1\leq p<\infty$ and let $v\in E$. If we set $\Omega_+:=\{\xi\in\Omega,v(\xi)>0\},\Omega_-:=\{\xi\in\Omega,v(\xi)<0\}$ and $\Omega_d:=\{\xi\in\Omega,v(\xi)=0\}$ then $E_{v^+}=\{w\in E,w=0\textnormal{ on } \Omega_+^c\},E_{v^-}=\{w\in E,w=0\textnormal{ on } \Omega_-^c\}$ and $|v|^d=\{w\in E,w=0\textnormal{ on } \Omega_d^c\}$. Hence $P_{v^+}w=1_{\Omega_+}w,P_{v^-}w=1_{\Omega_-}w$ and $P_{|v|^d}w=1_{\Omega_d}w$. We obtain that
\begin{align*}
(\textnormal{sign }v)(w)=\frac{v}{|v|}w\,1_{v\neq0}.
\end{align*}
\end{example}

\begin{proposition}
Assume that $E$ has order continuous norm. Then $\vartheta$ is one-sided Gateaux differentiable and
\begin{align*}
D_w^+\vartheta(v)=(\textnormal{sign }v)w+P_{|v|^d}|w|\\
D_w^-\vartheta(v)=(\textnormal{sign }v)w-P_{|v|^d}|w|
\end{align*}
\end{proposition}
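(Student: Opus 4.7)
The plan is to first observe that $\vartheta(v) = |v|$ is convex --- the triangle inequality gives $|\lambda v + (1-\lambda)w| \leq \lambda|v| + (1-\lambda)|w|$ --- so the existence in $E$ of the one-sided Gateaux derivatives is already guaranteed by the discussion of convex $E$-valued functions preceding Proposition \ref{convex}. Only the explicit formulas remain to be verified.

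My strategy is to exploit the band decomposition $E = E_{v^+} \oplus E_{v^-} \oplus |v|^d$ together with the fact that the associated projections $P_{v^+}$, $P_{v^-}$, $P_{|v|^d}$ are band projections, hence lattice homomorphisms satisfying $P|x| = |Px|$. Setting $w_{v^+} := P_{v^+}w$, $w_{v^-} := P_{v^-}w$, and $w_{|v|^d} := P_{|v|^d}w$, I note that $P_{v^+}v = v^+$, $P_{v^-}v = -v^-$, and $P_{|v|^d}v = 0$, because $v^+$, $v^-$, and elements of $|v|^d$ are pairwise disjoint. Applying each projection to $v+tw$ and using the lattice-homomorphism identity then yields, for $t > 0$,
\begin{align*}
|v+tw| = |v^+ + tw_{v^+}| + |v^- - tw_{v^-}| + t|w_{|v|^d}|.
\end{align*}

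The next step is to observe that for sufficiently small $t > 0$ the first two summands collapse to $v^+ + tw_{v^+}$ and $v^- - tw_{v^-}$: by definition of $E_{v^+}$ there is $n \in \N$ with $|w_{v^+}| \leq nv^+$, so $v^+ + tw_{v^+} \geq (1-tn)v^+ \geq 0$ for $t \leq 1/n$, and similarly for the second term. Subtracting $|v| = v^+ + v^-$ and dividing by $t$ gives
\begin{align*}
\frac{|v+tw| - |v|}{t} = w_{v^+} - w_{v^-} + |w_{|v|^d}| = (\textnormal{sign } v)w + P_{|v|^d}|w|,
\end{align*}
using $|w_{|v|^d}| = P_{|v|^d}|w|$ once more by the lattice-homomorphism property. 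Since this expression is independent of $t$ for small $t$, it equals $D_w^+\vartheta(v)$, and the formula for $D_w^-\vartheta(v)$ follows by substituting $-w$ in view of $D_w^-\vartheta(v) = -D_{-w}^+\vartheta(v)$, $(\textnormal{sign } v)(-w) = -(\textnormal{sign } v)w$, and $|-w| = |w|$.

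The main point to be careful about is the domination $|w_{v^+}| \leq nv^+$, which requires $w_{v^+}$ to lie in the principal ideal $E_{v^+}$. If instead the decomposition must be read via the principal bands (as is needed for it to hold literally in the order-continuous setting), one would first approximate $w_{v^+}$ by its truncation $(-nv^+) \vee w_{v^+} \wedge (nv^+) \in E_{v^+}$, which converges to $w_{v^+}$ in norm by order continuity, and then pass to the limit using the Lipschitz bound $\bigl\||v^+ + ta| - |v^+ + tb|\bigr\|_E \leq t\|a-b\|_E$.
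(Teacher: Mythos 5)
Your proof is correct, but it is a genuinely different route from the paper's: the paper does not argue directly at all, it simply quotes the known formula for the one-sided derivatives of the modulus from \cite[C-II, Proposition 5.6]{Nagel}, whereas you give a self-contained computation. Your argument --- convexity plus order continuity for existence, then the band decomposition $E=E_{v^+}\oplus E_{v^-}\oplus|v|^d$, the fact that band projections commute with the modulus, and the observation that the difference quotient is eventually constant when the components of $w$ are dominated by multiples of $v^\pm$ --- is sound, and the two formulas do reduce to $w_{v^+}-w_{v^-}+|w_{|v|^d}|$ exactly as you compute, with the left derivative following from $D_w^-\vartheta(v)=-D_{-w}^+\vartheta(v)$. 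Your final caveat is not pedantry but genuinely needed: the splitting with the \emph{principal ideals} $E_{v^\pm}$ as written in the paper is not literally exhaustive (e.g.\ $E=L^1(0,1)$, $v(\xi)=\xi$, $w\equiv1$), so the decomposition must indeed be read with the bands generated by $v^\pm$, and for general $w$ the "eventually constant quotient" step fails; your truncation $\bigl((-nv^+)\vee w_{v^+}\bigr)\wedge(nv^+)$ together with the uniform-in-$t$ Lipschitz estimate $\bigl\|\,|v+tw|-|v+tw'|\,\bigr\|_E\leq t\|w-w'\|_E$ (equivalently, $1$-Lipschitz dependence of both sides of the formula on $w$) closes this gap correctly. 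What the paper's citation buys is brevity and access to the complex-lattice version mentioned afterwards; what your argument buys is a transparent, elementary proof that makes visible exactly where order continuity enters (projection bands exist, truncations converge in norm, and the monotone difference quotients converge).
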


\begin{proof}
This follows from \cite[C-II, Proposition 5.6]{Nagel}.
\end{proof}

With this and Proposition \ref{convex} we obtain

\begin{theorem}\label{lattice sobolev}
Let $E$ be a Banach lattice with order continuous norm and let $1\leq p\leq\infty$. If $u\in W^{1,p}(\Omega,E)$, then $|u|\in W^{1,p}(\Omega,E)$ and
\begin{align*}
D_j|u|=(\textnormal{sign }u)D_ju.
\end{align*}
\end{theorem}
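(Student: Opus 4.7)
The plan is to apply Proposition \ref{convex} to $F = \vartheta : E \to E$, $\vartheta(v) = |v|$. First I would verify the hypotheses: $\vartheta$ is convex (a standard property of the modulus in a Banach lattice), Lipschitz continuous with constant $1$ (since $\| |v| - |w| \|_E \leq \| v - w \|_E$ follows from $| |v| - |w| | \leq |v - w|$ and monotonicity of the lattice norm), and satisfies $\vartheta(0) = 0$, so no boundedness assumption on $\Omega$ is needed. Proposition \ref{convex} then yields $|u| = \vartheta \circ u \in W^{1,p}(\Omega, E)$ together with the identity
\begin{equation*}
D_j |u| = D^+_{D_j u}\vartheta(u) = D^-_{D_j u}\vartheta(u).
\end{equation*}

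Next I would invoke the preceding proposition, which furnishes the explicit formulas
\begin{equation*}
D^{\pm}_w \vartheta(v) = (\textnormal{sign } v)\, w \pm P_{|v|^d} |w|.
\end{equation*}
Applied pointwise with $v = u(\xi)$ and $w = D_j u(\xi)$, the two expressions must agree almost everywhere, so the two band-projection terms cancel, giving $2\, P_{|u(\xi)|^d} |D_j u(\xi)| = 0$ a.e., and therefore $P_{|u(\xi)|^d} |D_j u(\xi)| = 0$ a.e. Substituting this back produces $D_j |u| = (\textnormal{sign } u)\, D_j u$, as claimed.

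The main subtlety is the cancellation step above. A naive application of the chain rule alone would only yield $D_j |u| = (\textnormal{sign } u)\, D_j u + P_{|u|^d} |D_j u|$, and it is not a priori obvious that the band-projection term should vanish. What does the work is the striking phenomenon highlighted just before Theorem \ref{Gateaux derivative}: although the one-sided Gateaux derivatives of $\vartheta$ differ in general, the two composition formulas for the distributional derivative must coincide, forcing the disjoint component to be zero almost everywhere. This is the vector-valued analogue of the classical Stampacchia identity $\nabla u = 0$ a.e.\ on $\{u = 0\}$, and with it in hand the theorem follows with no further computation.
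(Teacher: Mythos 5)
Your proposal is correct and follows exactly the route the paper intends: the paper's proof of Theorem \ref{lattice sobolev} is precisely the combination of Proposition \ref{convex} (applied to the convex, $1$-Lipschitz map $\vartheta(v)=|v|$ with $\vartheta(0)=0$) with the formulas $D_w^{\pm}\vartheta(v)=(\textnormal{sign }v)w\pm P_{|v|^d}|w|$, the equality of the two one-sided expressions forcing $P_{|u|^d}|D_ju|=0$ almost everywhere. Your observation that this cancellation is the Stampacchia-type phenomenon is also consistent with the paper, which records it as Corollary \ref{stampacchia}.
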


\begin{corollary}\label{stampacchia}
In the setting of Theorem \ref{lattice sobolev} let $u\in W^{1,p}(\Omega,E)$ and $w\in E_+$. Suppose that $|u|\wedge w=0$, then $|D_ju|\wedge w=0$.
\end{corollary}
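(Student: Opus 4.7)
The plan is to recognize that the hypothesis $|u|\wedge w=0$ almost everywhere says exactly that $u$ takes values in the band $B:=|w|^d=\{v\in E:|v|\wedge w=0\}$ almost everywhere, and then to push this property through the distributional derivative using the fact that $B$ is a closed linear subspace of $E$.

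First I would record the general fact that distributional derivatives commute with bounded linear maps: for any $T\in\mathcal{L}(E,F)$ and any $u\in W^{1,p}(\Omega,E)$, one has $T\circ u\in W^{1,p}(\Omega,F)$ with $D_j(T\circ u)=T\circ D_j u$. This is immediate from the definition, since for $\varphi\in\Cci(\Omega,\R)$ one can pull $T$ through the defining Bochner integrals:
\begin{align*}
\int_\Omega (T\circ u)\,\partial_j\varphi=T\!\left(\int_\Omega u\,\partial_j\varphi\right)=-T\!\left(\int_\Omega D_j u\,\varphi\right)=-\int_\Omega (T\circ D_j u)\,\varphi.
\end{align*}
Since $E$ has order continuous norm, the band decomposition $E=E_w\oplus|w|^d$ from the preceding discussion provides a bounded band projection $P_{|w|^d}\colon E\to|w|^d$, and in particular the complementary projection $Q:=I-P_{|w|^d}\in\mathcal{L}(E,E)$ is bounded (alternatively, one may bypass the projection property by using the quotient map $\pi\colon E\to E/B$, which is always continuous and linear).

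Now apply the commutation fact with $T=Q$. The hypothesis $|u(\xi)|\wedge w=0$ a.e.\ means $u(\xi)\in B$ a.e., so $Q\circ u=0$ a.e.\ on $\Omega$; the distributional derivative of the zero function is zero, hence $Q\circ D_j u=0$ a.e., i.e.\ $D_j u(\xi)\in B$ for almost every $\xi$. This is exactly the desired conclusion $|D_j u(\xi)|\wedge w=0$ a.e. The only real content is the pointwise-to-distributional translation via the closed subspace $B$; after that the argument is pure linearity, and I do not expect any genuine obstacle beyond verifying the routine commutation identity.
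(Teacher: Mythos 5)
Your proof is correct. The paper states this corollary without a printed proof, as an immediate companion to Theorem \ref{lattice sobolev}, and the implicit argument is essentially the one you give: the hypothesis places the values of $u$ in the band $B=\{v\in E:\ |v|\wedge w=0\}$, and one transfers this to $D_ju$ by a bounded linear operator that kills $B$ and commutes with the distributional derivative. Your verification of the commutation identity $D_j(T\circ u)=T\circ D_ju$ by pulling $T$ through the Bochner integrals is exactly the routine step needed, and your identification of the hypothesis with ``$u(\xi)\in B$ a.e.'' is the right reading. Two small remarks. First, be slightly careful with the decomposition you quote: what order continuity gives is that the band generated by $w$ is a projection band, $E=\{w\}^{dd}\oplus\{w\}^{d}$ (the ideal $E_w$ need not itself be a complement, though the paper uses the same loose notation); since $B=\{w\}^{d}$ is norm closed in any Banach lattice (lattice operations are continuous), your fallback via the quotient map $\pi\colon E\to E/B$ is airtight and, as you note, shows the statement does not actually need order continuity of the norm or any of the Gateaux-differentiability machinery of Theorem \ref{lattice sobolev} --- a genuine, if minor, gain in generality over the paper's framing. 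Second, your argument nowhere uses the formula $D_j|u|=(\mathrm{sign}\,u)D_ju$; that is fine, since the corollary is logically independent of it, but it is worth being aware that the paper's phrasing ``in the setting of Theorem \ref{lattice sobolev}'' is only fixing hypotheses, not a proof route.
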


\begin{corollary}\label{diff plus}
In the setting of Theorem \ref{lattice sobolev} we have that
\begin{align*}
D_ju^+=P_{u^+}D_ju
\end{align*}
\end{corollary}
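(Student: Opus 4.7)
I would begin from the lattice identity $u^+=\tfrac{1}{2}(u+|u|)$. Since both $u$ and, by Theorem \ref{lattice sobolev}, $|u|$ lie in $W^{1,p}(\Omega,E)$, linearity of the distributional derivative immediately gives $u^+\in W^{1,p}(\Omega,E)$ with
\[
D_ju^+=\tfrac{1}{2}\bigl(D_ju+(\textnormal{sign }u)D_ju\bigr).
\]

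Next I would decompose the right-hand side pointwise along the direct sum $E=E_{u^+}\oplus E_{u^-}\oplus|u|^d$. Writing $D_ju=P_{u^+}D_ju+P_{u^-}D_ju+P_{|u|^d}D_ju$ and using $(\textnormal{sign }u)w=P_{u^+}w-P_{u^-}w$, the $P_{u^-}$-pieces cancel, the $P_{u^+}$-pieces double, and only the $P_{|u|^d}$-contribution from $D_ju$ survives, leaving
\[
D_ju^+=P_{u^+}D_ju+\tfrac{1}{2}P_{|u|^d}D_ju.
\]
The corollary therefore follows once $P_{|u|^d}D_ju=0$ is established almost everywhere.

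For that I would revisit the ingredients behind Theorem \ref{lattice sobolev}. By Proposition \ref{convex} the one-sided Gateaux derivatives of $\vartheta(v)=|v|$ at $u(\xi)$ in the direction $D_ju(\xi)$ coincide a.e., and the Proposition just preceding Theorem \ref{lattice sobolev} evaluates them as $(\textnormal{sign }u)D_ju\pm P_{|u|^d}|D_ju|$. Their equality forces $P_{|u|^d}|D_ju|=0$ almost everywhere. Since $|u|^d$ is always a band and $E$ has order continuous norm, $P_{|u|^d}$ is a band projection and hence commutes with the absolute value, so $|P_{|u|^d}D_ju|=P_{|u|^d}|D_ju|=0$; this yields $P_{|u|^d}D_ju=0$ and finishes the proof.

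The delicate point is precisely this final step, because the bands $|u(\xi)|^d$ genuinely depend on $\xi$, so Corollary \ref{stampacchia} (stated for a fixed $w\in E_+$) cannot be quoted directly. Routing through the pointwise identity $P_{|u|^d}|D_ju|=0$ combined with band-projection calculus is the cleanest way around this. As an alternative that sidesteps the intermediate decomposition, one could apply Proposition \ref{convex} directly to the convex, sublinear, Lipschitz map $F(v)=v^+$: writing $v^+=\tfrac{1}{2}(|v|+v)$ gives $D_w^{\pm}F(v)=P_{v^+}w\pm P_{|v|^d}w^{\pm}$, and comparing $D^+_{D_ju}F(u)$ with $D^-_{D_ju}F(u)$ produces both the desired formula and the vanishing of the $|u|^d$-components in a single stroke.
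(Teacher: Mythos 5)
Your argument is correct, and its skeleton is the paper's: the identity $u^+=\frac{1}{2}(|u|+u)$ together with Theorem \ref{lattice sobolev} reduces everything to showing that the component of $D_ju$ in the band $|u|^d$ vanishes. Where the paper disposes of that last point by simply citing Corollary \ref{stampacchia}, you re-derive the pointwise identity $P_{|u(\xi)|^d}|D_ju(\xi)|=0$ almost everywhere from the coincidence of the two one-sided Gateaux derivatives of $\vartheta$ guaranteed by Proposition \ref{convex} together with the formula $D_w^{\pm}\vartheta(v)=(\textnormal{sign }v)w\pm P_{|v|^d}|w|$, and then pass to $P_{|u|^d}D_ju=0$ using that band projections are lattice homomorphisms. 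This is a legitimate refinement rather than a mere repetition: as you observe, Corollary \ref{stampacchia} is stated for a fixed $w\in E_+$ while the band $|u(\xi)|^d$ varies with $\xi$, so quoting it verbatim leaves a small gap which your pointwise route closes; in substance this is exactly the fact underlying Corollary \ref{stampacchia} itself, so both proofs rest on the same mechanism, yours just makes it explicit and self-contained. Your alternative -- applying Proposition \ref{convex} directly to the convex Lipschitz map $v\mapsto v^+$, computing $D_w^{\pm}F(v)=P_{v^+}w\pm P_{|v|^d}w^{\pm}$ and comparing the two one-sided derivatives -- is also correct and arguably the cleanest version, since it produces the formula and the vanishing of the $|u|^d$-component simultaneously.
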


\begin{proof}
This follows from Theorem \ref{lattice sobolev} and Corollary \ref{stampacchia} since $u^+=\frac{1}{2}(|u|+u)$.
\end{proof}

\begin{example}\label{Dju+ examples}
\begin{compactenum}[(a)]
\item If $X=\R$ we obtain the well known formula
\begin{align*}
D_ju^+=1_{\{u>0\}}D_ju
\end{align*}
\item Let $(S,\Sigma,\mu)$ be a measure space and let $u\in W^{1,p}(\Omega,L^r(S,\R))$ where $1\leq p\leq\infty$ and $1\leq r<\infty$. The norm on $L^r(S,\R)$ is order continuous. A pointwise comparison using Corollary \ref{diff plus} shows that 
\begin{align*}
D_ju^+(\xi)=D_ju(\xi)\cdot 1_{\{s\in S,u(\xi)(s)>0\}}.
\end{align*}
This result was essentially proven directly in \cite[Proposition 4.1]{ArendtDier}.
\end{compactenum}
\end{example}

Finally we remark that Theorem \ref{lattice sobolev} remains true if $E$ is a complex Banach lattice if $\textnormal{sign }u$ is defined properly, see \cite[C-II, Proposition 5.6]{Nagel}.

\section{H\"older Continuity of Distributionally Differentiable Functions}\label{section holder}

For a vector-valued function $u:\Omega\rightarrow X$ it is natural to ask whether a weak regularity property (in the sense of duality) implies the corresponding strong regularity property. For example if $u$ is locally bounded and $\langle u,x'\rangle$ is harmonic for all $x'$ in a separating subset of $X'$ then $u$ itself is harmonic, see \cite[Theorem 5.4]{ArendtHarmonic}. In this section we will show that H\"older continuity can also be tested in such a way while distributional differentiability cannot. We begin with two counterexamples for the latter.

\begin{example}\label{weird example}
\begin{compactenum}[(i)]
\item The function
\begin{align*}
&u:(0,1)\rightarrow L^r((0,1),\R)\quad(1\leq r\leq\infty)\\
&u(t):=1_{(0,t)}
\end{align*}
is nowhere differentiable as one can show by considering its difference quotient, hence it is not in $W^{1,p}(\Omega,L^r((0,1),\R))$. But for each function $x'\in L^{r'}((0,1),\R)$ we have that $\langle u(t),x'\rangle=\int_{0}^{t}{x'(s)\,ds}$ which is in $W^{1,p}((0,1),\R)$ for each $p\leq {r'}$ by Proposition \ref{fundamental}.\\
\item  Let $A\subset(0,1)$ be a set which is not Lebesgue-measurable and consider the Hilbert space $\ell^2(A)$ with orthonormal base $e_t:=(\delta_{t,s})_{s\in A}$. Consider the function
\begin{align*}
&u:(0,1)\rightarrow \ell^2(A)\\
&u(t)=\begin{cases}
0,&t\notin A\\
e_t,&t\in A.
\end{cases}
\end{align*}
Since for each $x'\in \ell^2(A)$ all coordinates except at most countably many are zero, one has $(u,x')=0$ almost everywhere which is in $W^{1,p}((0,1),\R)$. But $u$ is not even measurable.
\end{compactenum}
\end{example}

However if we assume more regularity, we obtain a positive result. The space $X$ is called \emph{weakly sequentially complete} if each weak Cauchy sequence in $X$ has a weak limit. Each reflexive space is weakly sequentially complete but also $L^1(\Omega)$.

\begin{proposition}
Suppose that $X$ is weakly sequentially complete and let $\Omega\subset\R^d$ be open and bounded. Let $u:\Omega\rightarrow X$ such that there exists a representative $u^*$ for which $\langle u^*,x'\rangle\in C^1(\overline{\Omega})$ holds for all $x'\in X'$. Then $u\in W^{1,p}(\Omega,X)$ for all $1\leq p\leq\infty$.
\end{proposition}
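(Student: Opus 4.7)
My plan is to construct a candidate for the distributional derivative pointwise as a weak limit of difference quotients, and then verify Bochner measurability, $L^p$-integrability, and the integration-by-parts identity. The sole place where weak sequential completeness enters is the construction of the candidate: fix $\xi \in \Omega$ and $j \in \{1,\ldots,d\}$ and any sequence $h_n\to 0$; the sequence $(u^*(\xi+h_n e_j)-u^*(\xi))/h_n$ is weakly Cauchy in $X$, because for every $x' \in X'$ the corresponding scalar quotient converges to $\partial_j \langle u^*,x'\rangle(\xi)$. Weak sequential completeness then produces a weak limit $v_j(\xi)\in X$ with $\langle v_j(\xi),x'\rangle = \partial_j\langle u^*,x'\rangle(\xi)$, independently of the chosen sequence. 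In particular each $v_j$ is weakly continuous on $\overline{\Omega}$.

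Next I would show that $u^*$ and each $v_j$ lie in $L^p(\Omega,X)$. Boundedness is a uniform boundedness argument: for each $x'\in X'$ the functions $\langle u^*,x'\rangle$ and $\langle v_j,x'\rangle$ are continuous on the compact set $\overline{\Omega}$, hence bounded in $\xi$; applying Banach-Steinhaus to the families $\{u^*(\xi)\}_{\xi\in\overline{\Omega}}$ and $\{v_j(\xi)\}_{\xi\in\overline{\Omega}}$ viewed as subsets of $X''$ yields uniform norm bounds. Bochner measurability is more delicate, and here I invoke Pettis' theorem: weak measurability is automatic, so it suffices to show that the values lie in a separable closed subspace. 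Picking a countable dense $D\subset\overline{\Omega}$, I set $Y := \overline{\mathrm{span}}\bigl(u^*(D) \cup \bigcup_{j=1}^d v_j(D)\bigr)$; this $Y$ is norm-closed and convex, hence weakly closed by Mazur's theorem, and weak continuity of $u^*, v_j$ together with the density of $D$ forces $u^*(\overline{\Omega})\cup \bigcup_j v_j(\overline{\Omega}) \subset Y$. Together with the uniform bound and $|\Omega|<\infty$, this gives $u^*, v_j \in L^p(\Omega,X)$ for every $1\le p \le \infty$.

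The distributional identity is then pure duality. For $\varphi \in \Cci(\Omega,\R)$ and any $x'\in X'$, scalar integration by parts applied to the $C^1(\overline{\Omega})$-function $\langle u^*,x'\rangle$ against the compactly supported $\varphi$ gives
\begin{align*}
\left\langle \int_\Omega u^*\,\partial_j\varphi,\,x'\right\rangle = \int_\Omega \langle u^*,x'\rangle\,\partial_j\varphi = -\int_\Omega \partial_j\langle u^*,x'\rangle\,\varphi = -\left\langle \int_\Omega v_j\,\varphi,\,x'\right\rangle,
\end{align*}
since Bochner integration commutes with bounded linear functionals. As $x'\in X'$ was arbitrary, Hahn-Banach identifies $\int_\Omega u^*\,\partial_j\varphi = -\int_\Omega v_j\,\varphi$ in $X$, so $D_j u^* = v_j$ and $u^* \in W^{1,p}(\Omega,X)$.

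I expect the main obstacle to be the interplay between the two non-trivial ingredients. Weak sequential completeness is what rescues the pointwise construction of $v_j(\xi)$, and is genuinely necessary: without it the result fails, as Example \ref{weird example}(i) shows. The Mazur-type separable-range trick is what promotes weak measurability to Bochner measurability, and it is required before any Bochner integrals can even be written down in the last step; this is the subtler part because $u^*$ is only assumed weakly continuous and \emph{a priori} has no reason to take values in a separable subspace of $X$.
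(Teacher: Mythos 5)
Your proof is correct and follows essentially the same route as the paper's: use weak sequential completeness to produce a pointwise candidate $v_j$ with $\langle v_j,x'\rangle=\partial_j\langle u^*,x'\rangle$, get Bochner measurability from weak continuity via Pettis' theorem, get the $L^p$ bounds from uniform boundedness, and conclude with the weakly valid integration-by-parts identity plus Hahn--Banach. The only difference is that you spell out details the paper leaves implicit (the difference-quotient construction of $v_j$ and the separable-range argument behind Pettis), which is fine.
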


\begin{proof}
Since $X$ is weakly sequentially complete there exists $u_j:\Omega\rightarrow X$ such that $\partial_j\langle u^*,x'\rangle=\langle u_j,x'\rangle$ for all $x'\in X'$. The function $u_j$ is weakly continuous and hence measurable by Pettis' Theorem. Further the uniform boundedness principle implies that $u^*,u_j\in L^p(\Omega,X)$. Since the integration by parts formula holds weakly, the Hahn-Banach Theorem implies that $u\in W^{1,p}(\Omega,X)$ and that $u_j=D_ju$.
\end{proof}

Note that the representative $u^*$ has to be fixed before applying the functional $x'$. The proof does not work if we only assume that $\langle u,x'\rangle$ has a representative in $C^1(\Omega,\R)$, cf. Example \ref{weird example} (ii). The result is also false if we drop the hypothesis that $X$ is weakly sequentially complete as the following example shows.

\begin{example}
The function
\begin{align*}
&u:(0,1)\rightarrow c_0\\
&u(t):=\left(\frac{\sin(nt)}{n}\right)
\end{align*}
is nowhere differentiable as the only candidate for the derivative is $u'(t)=(\cos(nt))_{n\in\N}$ which is not in $c_0$ for any $t\in(0,1)$. By Proposition \ref{fundamental} the function $u$ cannot be in $W^{1,p}((0,1),c_0)$. But each $x'\in c_0'$ can be represented as an absolutely convergent series, thus the function $\langle u,x'\rangle$ is in $C^1([0,1],\R)\subset W^{1,p}((0,1),\R)$.
\end{example}

A Banach lattice $E$ is weakly sequentially complete if and only if it does not have $c_0$ as a closed subspace, see \cite[Theorem 1.c.4]{LindenstraussTzafriri}. Hence combining the above proposition and example we obtain

\begin{corollary}
Let $E$ be a Banach lattice and $\Omega\subset\R^d$ open and bounded. Then the following are equivalent:\\
\begin{compactenum}[(i)]
\item Let $u:\Omega\rightarrow X$ such that $\langle u,x'\rangle\in C^1(\overline{\Omega},\R)$ for all $x'\in X'$. Then $u\in W^{1,p}(\Omega,X)$ for all $1\leq p\leq\infty$.
\item $E$ is weakly sequentially complete.
\item $E$ does not contain a closed subspace isomorphic to $c_0$.
\end{compactenum}
\end{corollary}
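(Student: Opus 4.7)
The plan is to close the cycle $\mathrm{(iii)} \Leftrightarrow \mathrm{(ii)} \Rightarrow \mathrm{(i)} \Rightarrow \mathrm{(iii)}$. The first equivalence is exactly the cited theorem of Lindenstrauss--Tzafriri, and $\mathrm{(ii)} \Rightarrow \mathrm{(i)}$ is the preceding Proposition (reading the ``$X$'' in the statement of the corollary as $E$, and interpreting the $C^1$ condition as the existence of a pointwise representative as in that proposition). Thus the real work lies in $\mathrm{(i)} \Rightarrow \mathrm{(iii)}$, which I will prove contrapositively: assuming $E$ contains a closed subspace isomorphic to $c_0$, I will construct a function that satisfies the hypothesis of $\mathrm{(i)}$ but fails to belong to $W^{1,p}(\Omega, E)$.

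Fix a bounded linear isomorphism $T : c_0 \to F \subset E$ onto a closed subspace $F$, and take the function $u(t) := (\sin(nt)/n)_{n\in\N}$ on $\R$ from the preceding Example. As shown there, $u$ is Lipschitz into $c_0$ but nowhere strongly differentiable, while $\langle u(\cdot), a\rangle = \sum_n a_n \sin(n\cdot)/n$ lies in $C^1(\R)$ for every $a = (a_n) \in \ell^1 \cong c_0'$. I define $U: \Omega \to E$ pointwise by $U(\xi) := T(u(\xi_1))$; since $\Omega$ is bounded and $u$ is bounded on $\R$, $U \in L^\infty(\Omega, E) \subset L^p(\Omega, E)$. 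For any $x' \in E'$, $\langle U(\xi), x'\rangle = \langle u(\xi_1), T^* x'\rangle$ with $T^* x' \in \ell^1$, and this defines a $C^1$ function on $\R^d$ (constant in $\xi_2, \ldots, \xi_d$), hence belongs to $C^1(\overline{\Omega}, \R)$. So $U$ satisfies the hypothesis of $\mathrm{(i)}$.

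It remains to verify $U \notin W^{1,p}(\Omega, E)$, which I do by contradiction. Supposing otherwise, I pick a small box $\omega \ssubset \Omega$ and invoke Theorem \ref{lines}(i) to obtain a representative $U^*: \R^d \to E$ with the BL-property and $U^* = U$ almost everywhere on $\omega$. By Fubini, for almost every transverse coordinate $(\xi_2, \ldots, \xi_d)$ the slice $\xi_1 \mapsto U^*(\xi_1, \xi_2, \ldots, \xi_d)$ is absolutely continuous on $\R$ and differentiable almost everywhere, and coincides almost everywhere with the continuous map $\xi_1 \mapsto T(u(\xi_1))$ on the $\xi_1$-slice of $\omega$. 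Since two continuous functions on an interval that agree almost everywhere agree everywhere, $\xi_1 \mapsto T(u(\xi_1))$ must itself be differentiable almost everywhere on this slice. Composing with the bounded linear map $T^{-1} : F \to c_0$ yields a.e.\ differentiability of $u$ on a nondegenerate interval, contradicting the nowhere-differentiability from the Example.

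The main obstacle I expect is precisely this last step --- transferring differentiability information from the abstract Sobolev representative $U^*$ back to the explicit pointwise function $u$. It is resolved by pairing the absolute continuity of the BL-slice against the Lipschitz continuity of $T\circ u$, so that equality almost everywhere upgrades to equality everywhere, and by the fact that an isomorphism onto a closed subspace has a bounded inverse, which allows differentiability to descend from $E$ back to $c_0$.
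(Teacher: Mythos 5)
Your proposal is correct and follows essentially the same route as the paper, which obtains the corollary by combining the Lindenstrauss--Tzafriri equivalence (ii)$\Leftrightarrow$(iii), the preceding proposition for (ii)$\Rightarrow$(i), and the $c_0$-valued example $u(t)=(\sin(nt)/n)_n$ for the failure of (i) when $c_0$ embeds. You merely make explicit the details the paper leaves implicit, namely transporting the example into the subspace $F\cong c_0$ of $E$ via the isomorphism $T$ and ruling out membership in $W^{1,p}(\Omega,E)$ in several variables through the BL-property of Theorem \ref{lines}, and these details are carried out correctly.
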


We now consider H\"older continuity. For $0<\alpha\leq1$ and $\beta>0$ denote by $C^\alpha_\beta(\Omega,X)$ the set of H{\"o}lder continuous functions such that
\begin{align}\label{Holder}
\|f(\xi)-f(\eta)\|_X\leq\beta|\xi-\eta|^\alpha
\end{align}
holds for all $\xi,\eta\in \Omega$. Unlike distributional differentiability, H\"older continuity can be tested.

\begin{proposition}
Let $u:\Omega\rightarrow X$ be a bounded function and assume that for all $x'\in X'$ there exist constants $\alpha,\beta$ such that $\langle u,x'\rangle\in C^\alpha_\beta(\Omega,\R)$. Then there exist $\alpha_0,\beta_0$ such that $u\in C^{\alpha_0}_{\beta_0}(\Omega,X)$.
\end{proposition}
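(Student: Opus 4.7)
The plan is to run a Baire category argument in the dual $X'$, converting the pointwise-in-$x'$ Hölder information into uniform-in-$x'$ control. Setting $M := \sup_{\xi \in \Omega} \|u(\xi)\|_X$, which is finite by hypothesis, I would introduce the sets
\begin{align*}
A_{n,m} := \{x' \in X' : |\langle u(\xi)-u(\eta), x'\rangle| \leq m|\xi-\eta|^{1/n} \text{ for all } \xi, \eta \in \Omega\}
\end{align*}
for $n,m \in \N$, and first check that each $A_{n,m}$ is norm-closed in $X'$; this is immediate because the defining inequality is preserved under norm-convergence in $x'$.

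The key step will be to show that $X' = \bigcup_{n,m} A_{n,m}$. Given $x' \in X'$, the hypothesis provides $\alpha(x'), \beta(x')$ with $|\langle u(\xi)-u(\eta), x'\rangle| \leq \beta(x') |\xi-\eta|^{\alpha(x')}$. Choosing $n \geq 1/\alpha(x')$ handles pairs with $|\xi-\eta| \leq 1$, since there $|\xi-\eta|^{\alpha(x')} \leq |\xi-\eta|^{1/n}$. For pairs with $|\xi-\eta| \geq 1$, the crude bound $|\langle u(\xi)-u(\eta), x'\rangle| \leq 2M\|x'\|_{X'}$ is already dominated by $m|\xi-\eta|^{1/n}$ as soon as $m \geq 2M\|x'\|_{X'}$. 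Hence $x' \in A_{n,m}$ for suitable $n,m$. The boundedness of $u$ enters precisely here, to control the long-range behaviour.

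Since $X'$ is a complete metric space, Baire's theorem then yields $n_0, m_0$, a point $x_0' \in X'$ and a radius $r > 0$ with $B_{X'}(x_0', r) \subset A_{n_0, m_0}$. I would then use a standard polarization: for any $y' \in X'$ with $\|y'\|_{X'} < r$, both $x_0' \pm y'$ lie in $A_{n_0, m_0}$, so applying the triangle inequality to the two resulting bounds gives $|\langle u(\xi)-u(\eta), y'\rangle| \leq m_0|\xi-\eta|^{1/n_0}$. Scaling to $\|y'\|_{X'} \leq 1$ and invoking Hahn-Banach produces
\begin{align*}
\|u(\xi) - u(\eta)\|_X \leq \frac{m_0}{r}\,|\xi - \eta|^{1/n_0},
\end{align*}
i.e., the desired Hölder estimate with $\alpha_0 = 1/n_0$ and $\beta_0 = m_0/r$.

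The main obstacle is engineering the definition of $A_{n,m}$ so that the family simultaneously covers $X'$ and consists of closed sets; the two-case split on $|\xi-\eta|$ and the boundedness assumption on $u$ are what make this possible. Once this is in place, the Baire plus polarization mechanism is routine.
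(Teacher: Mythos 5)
Your proposal is correct and follows essentially the same route as the paper: the paper also defines norm-closed sets of functionals indexed by H\"older exponent and constant (there diagonalized as $C^{1/n}_n$), uses boundedness of $u$ to verify they cover $X'$, applies Baire's theorem to find a ball $\overline{B_{X'}(x_0',r)}$ of functionals with a uniform estimate, and then removes $x_0'$ by a translation/polarization trick before taking the supremum over the dual unit ball. The only (immaterial) differences are your double index $(n,m)$ versus the paper's diagonal choice, and that the paper deduces the Proposition from its measurable variant (Theorem \ref{bounded Holder}), whereas you argue directly, which is simpler in the pointwise setting since no null sets or norming sequences are needed.
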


The proposition is a special case of the following theorem which better suits the context of Sobolev spaces.

\begin{theorem}\label{bounded Holder}
Let $u:\Omega\mapsto X$ be measurable and bounded such that $\langle u,x'\rangle$ has a H\"older continuous representative for each $x'\in X'$. Then $u$ has a H\"older continuous representative. 
\end{theorem}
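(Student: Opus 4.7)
The plan is a Baire category argument on $X'$, followed by an application of Pettis's theorem together with a countable norming subset to convert the scalar Hölder estimates into a vector-valued one.

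First, for $n, m \in \N$, I would define
\[F_{n,m} := \{x' \in X' : \langle u, x'\rangle \text{ admits a representative } f \text{ with } |f(\xi) - f(\eta)| \leq m\,|\xi - \eta|^{1/n} \text{ for all } \xi, \eta \in \Omega\}.\]
By hypothesis $X' = \bigcup_{n,m \in \N} F_{n,m}$. The key claim is that each $F_{n,m}$ is norm-closed. Given $x'_j \to x'$ with representatives $f_j$, the $f_j$ are equi-Hölder (hence equicontinuous), and uniformly bounded: since $\|u\|_X$ is essentially bounded and $f_j = \langle u, x'_j\rangle$ off a null set $N_j$, the bound $|f_j(\xi)| \leq \|u\|_\infty \|x'_j\|_{X'}$ holds on the dense set $\Omega \setminus N_j$ and thus, by continuity, everywhere. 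Arzelà-Ascoli together with a compact exhaustion of $\Omega$ and a diagonal argument produces a subsequential limit $f$ that is $(1/n, m)$-Hölder on $\Omega$. Off $\bigcup_j N_j$ we have $f_j(\xi) \to \langle u(\xi), x'\rangle$, so $f$ is a Hölder representative of $\langle u, x'\rangle$ and $x' \in F_{n,m}$.

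By Baire's theorem, some $F_{n_0, m_0}$ has nonempty interior and contains an open ball $B(x'_0, r)$. Writing $y' = (x'_0 + y') - x'_0$ with both summands in $F_{n_0, m_0}$ whenever $\|y'\|_{X'} < r$ yields $y' \in F_{n_0, 2m_0}$ for such $y'$, and rescaling then gives $\alpha := 1/n_0 \in (0, 1]$ and $C > 0$ such that for \emph{every} $x' \in X'$ the function $\langle u, x'\rangle$ has a Hölder representative $f_{x'}$ of exponent $\alpha$ and Hölder constant $C\|x'\|_{X'}$.

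Next, I invoke Pettis's theorem: since $u$ is strongly measurable, there is a null set $N_0$ and a separable closed subspace $X_0 \subset X$ with $u(\Omega \setminus N_0) \subset X_0$. Choosing a countable dense set $\{x_k\} \subset X_0$ and functionals $x'_k \in X'$ with $\|x'_k\|_{X'} = 1$ and $\langle x_k, x'_k\rangle = \|x_k\|_X$, a standard density argument shows that $\{x'_k\}$ norms $X_0$, i.e.\ $\|x\|_X = \sup_k |\langle x, x'_k\rangle|$ for all $x \in X_0$. Let $N_k$ be the null set on which $\langle u, x'_k\rangle \neq f_{x'_k}$ and set $N := N_0 \cup \bigcup_{k \in \N} N_k$. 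For $\xi, \eta \in \Omega \setminus N$ both $u(\xi), u(\eta) \in X_0$, and
\[\|u(\xi) - u(\eta)\|_X = \sup_k |f_{x'_k}(\xi) - f_{x'_k}(\eta)| \leq C\,|\xi - \eta|^\alpha.\]
Thus $u$ is $(\alpha, C)$-Hölder on the dense set $\Omega \setminus N$, and since $X$ is complete it extends uniquely to a Hölder continuous representative $\tilde{u}: \Omega \to X$.

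The hard part will be the closedness of $F_{n,m}$: one has to keep track of the null sets on which the identification $f_j = \langle u, x'_j\rangle$ may fail, and show that the Arzelà-Ascoli limit is actually a representative of $\langle u, x'\rangle$. It is precisely here that the boundedness hypothesis on $u$ enters, to bound the $f_j$ uniformly so that Arzelà-Ascoli applies. Once uniform Hölder control on $X'$ is secured, the Pettis plus countable norming subset step is the natural route to turn countably many scalar Hölder estimates into a single vector-valued one.
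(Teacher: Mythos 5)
Your proposal is correct and follows essentially the same route as the paper: a Baire category argument on the classes of functionals $x'$ for which $\langle u,x'\rangle$ admits a Hölder representative with quantified exponent and constant, followed by a reduction (via strong measurability) to a countable norming family of functionals and a supremum over it outside a countable union of null sets. The minor variations — your double-indexed classes $F_{n,m}$, the Arzelà--Ascoli argument for their closedness, and producing the norming sequence by Hahn--Banach at a countable dense subset of the separable range rather than by weak-$*$ separability of the dual ball — are cosmetic and do not change the substance of the argument.
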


\begin{proof}
Let $X_n':=\left\{x'\in X', \langle u,x'\rangle\textnormal{ has a representative in }C^{\frac{1}{n}}_n(\Omega,\R)\right\}$. Then $X_n'$ is a closed subset of $X'$. Since $u$ is bounded one has $\bigcup_{n\in\N}{X_n'}=X'$. By Baire's Theorem there exists an $n_0\in\N$ such that $X_{n_0}'$ contains a nontrivial closed ball $B:=\overline{B(x_0',r)}$. Let $x'\in X'$ and define $\lambda:=\frac{r}{\|x'\|_{X'}}$. Then we obtain that $x_0',x_0'+\lambda x'\in B$ and hence
\begin{align*}
|\langle u(\xi)-u(\eta),x'\rangle|\leq\frac{2n_0\|x'\|_{X'}}{r}|\xi-\eta|^{\frac{1}{n_0}}
\end{align*}
holds for all $\xi,\eta\in\Omega$ outside of a nullset.

Since $u$ is measurable, we may assume that $X$ is separable. In this case the Banach-Alaoglu Theorem implies that the unit sphere $S_{X'}(0,1)$, endowed with the weak-$*$ topology, is a compact metric space and hence separable. If $(x'_k)_{k\in\N}$ is dense in $(S',w^*)$ it is norming for $X$. Considering only the functionals $x_k'$ in the above computation yields that
\begin{align*}
|\langle u(\xi)-u(\eta),x_k'\rangle|\leq\frac{2n_0}{r}|\xi-\eta|^{\frac{1}{n_0}}
\end{align*}
holds for all $s,t\in\Omega$ outside the countable union of nulsets. Hence taking the supremum over all $k\in\N$ yields that $u$ has a representative in $C^{\frac{1}{n_0}}_{\frac{2n_0}{r}}(\Omega,X)$.
\end{proof}

As an application of this theorem consider the De Giorgi-Nash Theorem: We consider a second-order partial differential operator $L$ with scalar-valued coefficients. The notions of ellipticity and divergence form are as usual, c.f. \cite[Chapter 8]{GilbargTrudinger}.

\begin{corollary}[De Giorgi-Nash]
Let $u\in W^{1,2}(\Omega,X)$ be a weak solution of the equation $Lu=g+D_if^i$ where $L$ is a strictly elliptic differential operator of second order in divergence form with real measurable bounded coefficients, $f^i\in L^q(\Omega,X)$ and $g\in L^{\frac{q}{2}}(\Omega,X)$ for some $q>d$. Then $u$ is locally H\"older continuous.
\end{corollary}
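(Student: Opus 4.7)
The plan is to reduce the vector-valued regularity question to the classical scalar De Giorgi-Nash theorem via duality, and then to invoke Theorem~\ref{bounded Holder} to lift H\"older regularity back to $u$.

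First I would observe that since the coefficients of $L$ are scalar-valued, applying an arbitrary $x'\in X'$ to the $X$-valued weak formulation and pulling $x'$ inside the Bochner integrals shows that $\langle u,x'\rangle\in W^{1,2}(\Omega,\R)$ is a scalar weak solution of
\begin{align*}
L\langle u,x'\rangle=\langle g,x'\rangle+D_i\langle f^i,x'\rangle
\end{align*}
with $\langle g,x'\rangle\in L^{q/2}(\Omega,\R)$ and $\langle f^i,x'\rangle\in L^q(\Omega,\R)$. The classical scalar De Giorgi-Nash theorem, as in \cite[Theorem~8.24]{GilbargTrudinger}, then provides a locally H\"older continuous representative of each $\langle u,x'\rangle$.

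Second, in order to apply Theorem~\ref{bounded Holder} on a fixed $\omega\ssubset\Omega$, I need $u$ to be measurable and essentially bounded on $\omega$. Measurability is automatic, and for the $L^\infty$ bound I would invoke the scalar local boundedness estimate from \cite[Theorem~8.17]{GilbargTrudinger} on $\omega\ssubset\omega'\ssubset\Omega$, which yields
\begin{align*}
\|\langle u,x'\rangle\|_{L^\infty(\omega)}\leq C\Bigl(\|u\|_{L^2(\omega',X)}+\|g\|_{L^{q/2}(\omega',X)}+\sum_{i=1}^{d}\|f^i\|_{L^q(\omega',X)}\Bigr)\|x'\|_{X'}
\end{align*}
with a constant $C$ depending only on $\omega,\omega'$ and the coefficients of $L$, but independent of $x'$. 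Since $u\in L^2(\Omega,X)$ is essentially separably valued I may assume that $X$ is separable; the closed unit ball of $X'$ in the weak-$*$ topology is then a compact metric space, so a countable norming family $(x'_k)_{k\in\N}\subset B_{X'}$ is available, and taking the supremum over $k$ off a countable union of null sets gives $\|u\|_{L^\infty(\omega,X)}<\infty$.

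Finally, Theorem~\ref{bounded Holder} applied on $\omega$ produces a single H\"older continuous representative of $u|_\omega$; since $\omega\ssubset\Omega$ was arbitrary, this proves local H\"older continuity of $u$ on $\Omega$. The main technical point I expect is the uniformity of the scalar De Giorgi-Nash constant in the functional $x'$: this uniformity is essential for the boundedness step and is available precisely because $L$ carries real scalar coefficients, so that pairing with $x'$ reproduces the \emph{same} scalar operator, with $x'$-dependence entering only linearly through the data on the right-hand side.
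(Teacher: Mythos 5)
Your proposal follows essentially the same route as the paper's proof: pair $u$ with functionals $x'\in X'$, apply the classical scalar De Giorgi--Nash theorem of Gilbarg--Trudinger to the scalar weak solutions $\langle u,x'\rangle$, and lift the H\"older continuity back to $u$ via Theorem \ref{bounded Holder}. Your extra step verifying the local boundedness of $u$ (a hypothesis of Theorem \ref{bounded Holder}) through the scalar local boundedness estimate with a constant uniform in $x'$ and a countable norming family is correct and fills in a detail the paper leaves implicit.
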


\begin{proof}
This follows from Theorem \ref{bounded Holder} and \cite[Theorem 8.22]{GilbargTrudinger}.
\end{proof}

We may also use Theorem \ref{bounded Holder} to extend Morrey's Embedding Theorem to vector-valued functions. However, if the constants $\alpha,\beta$ in (\ref{Holder}) are controllable, the result of Theorem \ref{bounded Holder} might be easier to prove. We shall do so in the next section.\\

A similar Baire argument as in Theorem \ref{bounded Holder} gives uniform H\"older exponents in some situations.

\begin{proposition}
Let $Y$ be a Banach space and let $T:Y\rightarrow C_b(\Omega,X)$ be linear and continuous. Assume that for each $y\in Y$ the function $Ty$ is H\"older continuous. Then there exist $\alpha_0,\beta_0$ such that $Ty\in C^{\alpha_0}_{\beta_0}(\Omega,X)$.
\end{proposition}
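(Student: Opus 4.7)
The natural approach is a Baire category argument in $Y$, closely paralleling the proof of Theorem \ref{bounded Holder} (which did Baire in $X'$). For each $n \in \N$ I would define
\begin{align*}
Y_n := \{y \in Y : Ty \in C^{1/n}_n(\Omega,X)\}.
\end{align*}
The hypothesis that every $Ty$ is H\"older continuous means that for each $y \in Y$ there exist $\alpha_y \in (0,1]$ and $\beta_y > 0$ with $Ty \in C^{\alpha_y}_{\beta_y}(\Omega,X)$; choosing any $n \geq \max\{1/\alpha_y,\beta_y\}$ shows $y \in Y_n$, so $Y = \bigcup_{n \in \N} Y_n$.

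Next I would check that each $Y_n$ is closed. If $y_k \to y$ in $Y$ with $y_k \in Y_n$, then by continuity of $T : Y \to C_b(\Omega,X)$ we have $Ty_k \to Ty$ uniformly on $\Omega$, in particular pointwise. For fixed $\xi,\eta \in \Omega$ the estimate $\|Ty_k(\xi)-Ty_k(\eta)\|_X \leq n|\xi - \eta|^{1/n}$ passes to the limit, giving $y \in Y_n$. Baire's theorem now yields some $n_0$ for which $Y_{n_0}$ has nonempty interior, hence contains a closed ball $\overline{B_Y(y_0,r)}$.

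Finally, for an arbitrary $y \in Y$ with $\|y\|_Y \leq 1$, set $\lambda := r$. Then both $y_0$ and $y_0 + \lambda y$ lie in $Y_{n_0}$, and by linearity
\begin{align*}
Ty = \tfrac{1}{\lambda}\bigl(T(y_0+\lambda y) - Ty_0\bigr),
\end{align*}
so for all $\xi,\eta \in \Omega$,
\begin{align*}
\|Ty(\xi)-Ty(\eta)\|_X \leq \tfrac{1}{\lambda}\bigl(n_0 + n_0\bigr)|\xi-\eta|^{1/n_0} = \tfrac{2n_0}{r}|\xi-\eta|^{1/n_0}.
\end{align*}
This gives $Ty \in C^{\alpha_0}_{\beta_0}(\Omega,X)$ with $\alpha_0 := 1/n_0$ and $\beta_0 := 2n_0/r$ uniformly for $\|y\|_Y \leq 1$; by linearity of $T$ the general case follows with Hölder constant $\beta_0\|y\|_Y$.

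I do not expect a real obstacle here: the closedness of $Y_n$ is immediate from continuity of $T$ into $C_b(\Omega,X)$ together with the fact that a pointwise limit of functions satisfying a fixed H\"older bound inherits that bound, and the rest is routine Baire plus linearity. The only point deserving a moment's care is to ensure the target space $C_b(\Omega,X)$ (with the sup norm) makes the closedness step work; uniform convergence is more than enough.
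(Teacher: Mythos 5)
Your proof is correct and is exactly the Baire argument the paper intends: the paper gives no separate proof of this proposition, only the remark that ``a similar Baire argument as in Theorem~\ref{bounded Holder}'' applies, and your decomposition of $Y$ into the closed sets $Y_n$, the Baire step, and the scaling by the radius of the ball reproduce that argument faithfully. One small repair: if $\Omega$ has diameter larger than $1$, lowering the exponent from $\alpha_y$ to $1/n$ does not by itself preserve the bound for pairs with $|\xi-\eta|>1$, so in verifying $Y=\bigcup_n Y_n$ you should also require $n\geq 2\|Ty\|_{C_b(\Omega,X)}$ (available since $T$ maps into $C_b(\Omega,X)$), which is precisely the role boundedness plays in the covering step of Theorem~\ref{bounded Holder}.
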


\section{Embedding Theorems}\label{applications}
As an application of Corollary \ref{norm estimate} we prove that the Sobolev-Gagliar\-do-Nirenberg Embedding Theorems carry over from the real-valued to the vector-valued case.\\

\begin{theorem}\label{embedding}
Let $\Omega\subset\R^d$ be open such that we have a continuous embedding $W^{1,p}(\Omega,\R)\hookrightarrow L^r(\Omega,\R)$ for some $1\leq p,r\leq\infty$. Then we have a continuous embedding $W^{1,p}(\Omega,X)\hookrightarrow L^r(\Omega,X)$ for any Banach space $X$ where the norm of the embedding remains the same.
\end{theorem}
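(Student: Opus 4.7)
The plan is to reduce the vector-valued embedding to the scalar-valued embedding via the function $\xi \mapsto \|u(\xi)\|_X$, exploiting the norm estimate from Corollary \ref{norm estimate}. Let $C$ denote the operator norm of the scalar embedding $W^{1,p}(\Omega,\R)\hookrightarrow L^r(\Omega,\R)$, so that $\|v\|_{L^r(\Omega,\R)}\leq C\|v\|_{W^{1,p}(\Omega,\R)}$ for all $v\in W^{1,p}(\Omega,\R)$.

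Fix $u\in W^{1,p}(\Omega,X)$. The first step is to apply Corollary \ref{norm estimate}, which yields that $\|u\|_X\in W^{1,p}(\Omega,\R)$ together with the crucial inequality
\begin{align*}
\|\,\|u\|_X\,\|_{W^{1,p}(\Omega,\R)} \leq \|u\|_{W^{1,p}(\Omega,X)}.
\end{align*}
The second step is to feed the scalar function $\|u\|_X$ into the hypothesized real-valued embedding, obtaining
\begin{align*}
\|\,\|u\|_X\,\|_{L^r(\Omega,\R)} \leq C\,\|\,\|u\|_X\,\|_{W^{1,p}(\Omega,\R)}.
\end{align*}

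The third step is the observation that, by the very definition of the Bochner space, $\|u\|_{L^r(\Omega,X)}=\|\,\|u\|_X\,\|_{L^r(\Omega,\R)}$ (the strong measurability of $u$, inherited from $u\in L^p(\Omega,X)$, guarantees that $\|u\|_X$ is measurable and that both expressions refer to the same quantity, whether finite or infinite). Chaining the three estimates gives
\begin{align*}
\|u\|_{L^r(\Omega,X)} \leq C\,\|u\|_{W^{1,p}(\Omega,X)},
\end{align*}
which establishes the continuous embedding with the same operator norm $C$. There is no substantive obstacle here: the only nontrivial ingredient is Corollary \ref{norm estimate}, which was the point of developing the chain rule for the norm map in Section \ref{gateaux}, and everything else is a formal composition of inequalities.
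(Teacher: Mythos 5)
Your proof follows essentially the same route as the paper: apply Corollary \ref{norm estimate} to get $\|u\|_X\in W^{1,p}(\Omega,\R)$ with $\|\,\|u\|_X\,\|_{W^{1,p}(\Omega,\R)}\leq\|u\|_{W^{1,p}(\Omega,X)}$, feed this into the scalar embedding, and use $\|u\|_{L^r(\Omega,X)}=\|\,\|u\|_X\,\|_{L^r(\Omega,\R)}$. The only point you gloss over is the claim that the embedding norm is exactly $C$ rather than merely at most $C$: your chain of inequalities gives the upper bound, and for equality one still needs the (trivial) reverse observation, as in the paper, that the vector-valued embedding norm cannot be smaller than the scalar one, since scalar-valued functions sit isometrically inside $W^{1,p}(\Omega,X)$ via $f\mapsto f\otimes x_0$ for a unit vector $x_0\in X$.
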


\begin{proof}
Let $u\in W^{1,p}(\Omega,X)$ then $\|u\|_X\in W^{1,p}(\Omega,\R)$. By assumption it follows that $\|u\|_X\in L^r(\Omega,\R)$ and hence by definition of the Bochner space we obtain $u\in L^r(\Omega,X)$. Further if $C$ is the norm of the embedding in the real-valued case, we use Corollary \ref{norm estimate} to compute
\begin{align*}
\|u\|_{L^r(\Omega,X)}&=\|\,\|u\|_X\,\|_{L^r(\Omega,\R)}\leq C\|\,\|u\|_X\,\|_{W^{1,p}(\Omega,\R)}\leq C\|u\|_{W^{1,p}(\Omega,X)}.
\end{align*}
On the other hand, the norm of the embedding $W^{1,p}(\Omega,X)\hookrightarrow L^r(\Omega,X)$ cannot be less than the norm of the embedding $W^{1,p}(\Omega,\R)\hookrightarrow L^r(\Omega,\R)$ hence they are equal.
\end{proof}

\begin{theorem}[Morrey's Embedding Theorem]\label{morrey}
Let $\Omega$ be open and suppose that there exist constants $C$ and $\alpha$ such that
\begin{align*}
|u(\xi)-u(\eta)|\leq C\|u\|_{W^{1,p}(\Omega,\R)}|\xi-\eta|^{\alpha}
\end{align*}
for all $u\in W^{1,p}(\Omega,\R)$ and almost all $\xi,\eta\in\Omega$. Then for all $u\in W^{1,p}(\Omega,X)$ we have that
\begin{align*}
\|u(\xi)-u(\eta)\|_X\leq C\|u\|_{W^{1,p}(\Omega,X)}|\xi-\eta|^{\alpha}
\end{align*}
for almost all $\xi,\eta\in\Omega$. In particular $u$ has a H\"older continuous representative.
\end{theorem}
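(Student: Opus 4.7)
The plan is to reduce to the scalar hypothesis by testing against functionals. For any $x'\in X'$ with $\|x'\|_{X'}\leq 1$ the function $\langle u,x'\rangle$ lies in $W^{1,p}(\Omega,\R)$ with $D_j\langle u,x'\rangle=\langle D_ju,x'\rangle$, hence
\[
\|\langle u,x'\rangle\|_{W^{1,p}(\Omega,\R)}\leq \|u\|_{W^{1,p}(\Omega,X)}.
\]
Applying the scalar hypothesis to $\langle u,x'\rangle$ produces a nullset $N_{x'}\subset\Omega$ such that
\[
|\langle u(\xi)-u(\eta),x'\rangle|\leq C\|u\|_{W^{1,p}(\Omega,X)}|\xi-\eta|^{\alpha}
\]
for all $\xi,\eta\in\Omega\setminus N_{x'}$.

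The main obstacle is that the exceptional set $N_{x'}$ depends on $x'$, so one cannot immediately take the supremum over the closed unit ball of $X'$. I would resolve this in the same way as in the proof of Theorem \ref{bounded Holder}. Since $u\in L^p(\Omega,X)$ is strongly measurable, Pettis' measurability theorem allows us to assume that the essential range of $u$ lies in a separable closed subspace $X_0\subset X$. By Banach-Alaoglu, the closed unit ball of $X_0'$ is weak-$*$ metrizable, hence weak-$*$ separable; a countable weak-$*$ dense sequence in it is norming for $X_0$, and by Hahn-Banach each of its members extends to some $x_k'$ in the closed unit ball of $X'$. Setting $N:=\bigcup_{k\in\N}N_{x_k'}$, which is still a nullset, we obtain
\[
|\langle u(\xi)-u(\eta),x_k'\rangle|\leq C\|u\|_{W^{1,p}(\Omega,X)}|\xi-\eta|^{\alpha}\qquad(k\in\N,\ \xi,\eta\in\Omega\setminus N).
\]
Because $(x_k')$ is norming for $X_0$ and $u(\xi)-u(\eta)\in X_0$ for $\xi,\eta\notin N$, taking the supremum over $k$ yields the desired inequality on $\Omega\setminus N$.

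For the final clause, the restriction of $u$ to $\Omega\setminus N$ is uniformly continuous with modulus $t\mapsto C\|u\|_{W^{1,p}(\Omega,X)}t^{\alpha}$, and $\Omega\setminus N$ is dense in $\Omega$. Hence $u|_{\Omega\setminus N}$ extends uniquely to a continuous function $\tilde u:\Omega\to X$ which agrees with $u$ almost everywhere and satisfies the same Hölder estimate everywhere on $\Omega$, providing the Hölder continuous representative.
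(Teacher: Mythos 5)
Your proof is correct and follows essentially the same route as the paper: reduce to the scalar case by testing against a countable norming family of functionals (obtained, exactly as in the proof of Theorem \ref{bounded Holder}, from separability of the essential range and weak-$*$ metrizability of the dual ball), bound $\|\langle u,x_k'\rangle\|_{W^{1,p}(\Omega,\R)}$ by $\|u\|_{W^{1,p}(\Omega,X)}$, and take the supremum over $k$ outside a common null set. Your additional details (uniting the null sets $N_{x_k'}$ --- to which one should also add the null set where $u$ takes values outside $X_0$ --- and the density/uniform-continuity argument producing the continuous representative) merely spell out what the paper leaves implicit.
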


\begin{proof}
Let $x_k'$ be chosen as in the proof of Theorem \ref{bounded Holder} and define $u_k:=\langle u,x_k'\rangle$. As the integral commutes with $x_k'$ one has $u_k\in W^{1,p}(\Omega,\R)$ with derivative $D_j u_k=\langle D_ju,x_k'\rangle$. By assumption we have
\begin{align*}
|u_k(\xi)-u_k(\eta)|\leq C\|u_k\|_{W^{1,p}(\Omega,\R)}|\xi-\eta|^{\alpha},
\end{align*}
for all $k\in\N$ and all $\xi,\eta$ outside a common set of measure zero. Since $|u_k|\leq\|u\|_X$ and $|D_ju_k|\leq \|D_ju\|_X$ we obtain that $\|u_k\|_{W^{1,p}(\Omega,\R)}\leq \|u\|_{W^{1,p}(\Omega,X)}$. Taking the supremum over all $k\in\N$ on the left side we are left with
\begin{align*}
\|u(\xi)-u(\eta)\|_X\leq C\|u\|_{W^{1,p}(\Omega,X)}|\xi-\eta|^{\alpha}
\end{align*}
and hence $u$ has a H\"older continuous representative.
\end{proof}

Next we prove compactness of the Sobolev embedding. Let $\Omega\subset\R^d$ be a bounded open set with Lipschitz boundary. If $\Omega\subset\R$ is an interval, the following result is known as the Aubin-Lions Lemma, see \cite{Aubin}, \cite[Chapter III, Proposition 1.3]{Showalter}. The Aubin-Lions result is very useful in the theory of partial differential equations, see e.g. \cite{ArendtChill}. Many extensions on intervals have been given, see e.g. \cite{Simon}. Amann \cite[Theorem 5.2]{AmannCompact} gives a multidimensional version if the boundary of $\Omega$ is smooth. Here we prove a special case of Amann's result by direct arguments which only require the boundary to be Lipschitz.

\begin{theorem}\label{AubinLions}
Let $\Omega\subset\R^d$ be a bounded open set with Lipschitz boundary. Suppose $X,Y$ are Banach spaces such that $Y$ is compactly embedded in $X$ and let $1\leq p<\infty$. Then the embedding
\begin{align*}
W^{1,p}(\Omega,X)\cap L^p(\Omega,Y)\hookrightarrow L^p(\Omega,X)
\end{align*}
is compact.
\end{theorem}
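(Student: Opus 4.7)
The plan is to establish this multi-dimensional Aubin--Lions theorem by a vector-valued Fréchet--Kolmogorov compactness argument: given a bounded sequence $(u_n)_{n\in\N}$ in $W^{1,p}(\Omega,X)\cap L^p(\Omega,Y)$, I would extract a subsequence convergent in $L^p(\Omega,X)$ by sandwiching the $u_n$ between their mollifications, controlling both the error of mollification uniformly in $n$ and the compactness of the mollified family.

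\textbf{Extension and reduction to $\R^d$.} Since $\partial\Omega$ is Lipschitz, there is a bounded scalar extension operator $W^{1,p}(\Omega,\R)\to W^{1,p}(\R^d,\R)$. The standard reflection construction works pointwise on values and hence carries over to the Banach-space-valued setting, giving a bounded operator $W^{1,p}(\Omega,X)\to W^{1,p}(\R^d,X)$ whose restriction to $L^p$ is also bounded from $L^p(\Omega,Y)$ into $L^p(\R^d,Y)$. Multiplying by a fixed cutoff supported near $\overline\Omega$, we obtain $\tilde u_n$ with common compact support $K\subset\R^d$, bounded in both $W^{1,p}(\R^d,X)$ and $L^p(\R^d,Y)$.

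\textbf{Uniform closeness to mollifications.} Let $\rho_\varepsilon$ be a standard mollifier and $\tilde u_n^\varepsilon:=\rho_\varepsilon*\tilde u_n$. By Minkowski's integral inequality combined with the $\R^d$-version of Lemma \ref{easydiffquotient} (applicable since $\tilde u_n\in W^{1,p}(\R^d,X)$), there is a constant $M$ independent of $n$ and $\varepsilon$ with
\begin{align*}
\|\tilde u_n^\varepsilon-\tilde u_n\|_{L^p(\R^d,X)} \leq \int \rho_\varepsilon(h)\,\|\tilde u_n(\cdot-h)-\tilde u_n\|_{L^p(\R^d,X)}\,dh \leq M\varepsilon,
\end{align*}
so $\tilde u_n^\varepsilon\to\tilde u_n$ in $L^p(\R^d,X)$ uniformly in $n$ as $\varepsilon\to 0$.

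\textbf{Compactness of the mollified family.} For fixed $\varepsilon>0$, the $\tilde u_n^\varepsilon$ are continuous, supported in $K_\varepsilon:=\overline{K+B(0,\varepsilon)}$, and satisfy
\begin{align*}
\|\tilde u_n^\varepsilon(x)\|_Y \leq \|\rho_\varepsilon\|_{L^{p'}}\,\|\tilde u_n\|_{L^p(\R^d,Y)},
\end{align*}
so the pointwise range lies in a bounded set of $Y$, which by compactness of $Y\hookrightarrow X$ is relatively compact in $X$. Equicontinuity in $x$ follows uniformly in $n$ from
\begin{align*}
\|\tilde u_n^\varepsilon(x)-\tilde u_n^\varepsilon(z)\|_X \leq \|\rho_\varepsilon(x-\cdot)-\rho_\varepsilon(z-\cdot)\|_{L^{p'}}\,\|\tilde u_n\|_{L^p(\R^d,X)}
\end{align*}
together with continuity of translation in $L^{p'}$. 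Arzelà--Ascoli yields relative compactness in $C(K_\varepsilon,X)$, and the common compact support transfers this to $L^p(\R^d,X)$.

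A diagonal extraction along $\varepsilon=1/k$ then produces a subsequence Cauchy in $L^p(\R^d,X)$, whose restriction to $\Omega$ gives the claimed convergent subsequence. The main obstacle is the extension step: one must verify that the scalar Lipschitz-boundary extension operator lifts to the Banach-valued setting and acts simultaneously on $W^{1,p}(\cdot,X)$ and $L^p(\cdot,Y)$ with uniform bounds. This is the only place where the Lipschitz regularity of $\partial\Omega$ enters essentially; the mollification and Arzelà--Ascoli steps are routine once extension is in place.
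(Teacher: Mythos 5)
Your argument is essentially sound and shares the paper's broad strategy (extension, mollification, Arzel\`a--Ascoli with pointwise values in a bounded $Y$-ball, hence in a compact subset of $X$), but it is organized differently at the one point that matters. You extend globally in \emph{both} norms and mollify on all of $\R^d$, so your proof hinges on the claim that the Lipschitz-boundary extension operator is simultaneously bounded from $W^{1,p}(\Omega,X)$ into $W^{1,p}(\R^d,X)$ and from $L^p(\Omega,Y)$ into $L^p(\R^d,Y)$ --- a property that the extension theorem the paper invokes (Theorem \ref{extension theorem}) does not assert, and which you yourself flag as the main obstacle. The claim is true for the standard construction (partition of unity combined with compositions with bi-Lipschitz reflections across the local Lipschitz graphs), since such an operator acts ``pointwise on values'' and the $Y$-valued bound follows by change of variables; but this verification is a genuine piece of work that your write-up only sketches. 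The paper's proof is arranged precisely to avoid it: the extension is used only for the $X$-valued $W^{1,p}$ bound (feeding into Lemma \ref{uniform convolution}), while the $Y$-norm bound on the mollified functions is obtained on an interior set $\omega\subset\joinrel\subset\Omega$ by taking the mollification radius smaller than $\text{dist}(\omega,\partial\Omega)$, so that $\rho_{n_0}\ast\mathcal{E}f$ on $\overline{\omega}$ only involves values of $f$ itself and the hypothesis $\|f\|_{L^p(\Omega,Y)}\leq1$ applies directly; the boundary strip $\Omega\backslash\omega$ is then controlled by a uniform sup-bound in $X$ together with the smallness of its measure. So your route buys a cleaner global Fr\'echet--Kolmogorov-type argument with a diagonal extraction, at the price of proving the two-norm boundedness of the extension; the paper's two-step localization buys independence from any structural property of the extension beyond the $X$-valued bound already stated. (Your remaining steps are fine, including the case $p=1$, $p'=\infty$ in the equicontinuity estimate, since $\rho_\varepsilon$ is Lipschitz.)
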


For the proof we will need some auxillary results.

\begin{lemma}\label{equicontinuity}
Let $\mathcal{F}\subset L^p(\R^d,X)$ be bounded and $\rho\in\Cci(\R^d,\R)$. Then there exists a $c>0$ such that
\begin{align*}
\|\rho\ast f(\xi)-\rho\ast f(\eta)\|_X\leq c|\xi-\eta|
\end{align*}
for all $\xi,\eta\in\R^d$ and $f\in\mathcal{F}$.
\end{lemma}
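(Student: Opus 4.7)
The plan is to estimate the difference $\rho\ast f(\xi)-\rho\ast f(\eta)$ pointwise via the convolution integral and reduce the problem, by translation invariance, to controlling the $L^{p'}$-modulus of continuity of $\rho$ itself. Writing out
\begin{align*}
\rho\ast f(\xi)-\rho\ast f(\eta)=\int_{\R^d}\bigl(\rho(\xi-z)-\rho(\eta-z)\bigr)f(z)\,dz,
\end{align*}
H\"older's inequality (with exponent $p'$ conjugate to $p$) yields
\begin{align*}
\|\rho\ast f(\xi)-\rho\ast f(\eta)\|_X\leq \|\rho(\xi-\cdot)-\rho(\eta-\cdot)\|_{L^{p'}(\R^d)}\,\|f\|_{L^p(\R^d,X)}.
\end{align*}
Setting $h:=\xi-\eta$ and substituting $w=\xi-z$ reduces the first factor to $\|\rho-\tau_h\rho\|_{L^{p'}}$, where $\tau_h\rho(w):=\rho(w-h)$. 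Since $\mathcal{F}$ is bounded in $L^p(\R^d,X)$, it suffices to show $\|\rho-\tau_h\rho\|_{L^{p'}}\leq c'|h|$ uniformly in $h\in\R^d$.

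To obtain the uniform estimate I would split into two cases. For $|h|\leq 1$, fix $R>0$ with $\mathrm{supp}(\rho)\subset B(0,R)$; then $\mathrm{supp}(\rho-\tau_h\rho)\subset B(0,R+1)$, and the fundamental theorem of calculus gives
\begin{align*}
|\rho(w)-\rho(w-h)|=\left|\int_0^1\nabla\rho(w-th)\cdot h\,dt\right|\leq\|\nabla\rho\|_{\infty}|h|,
\end{align*}
so that $\|\rho-\tau_h\rho\|_{L^{p'}}\leq\|\nabla\rho\|_{\infty}\,|B(0,R+1)|^{1/p'}\,|h|$. For $|h|>1$ the trivial bound $\|\rho-\tau_h\rho\|_{L^{p'}}\leq 2\|\rho\|_{L^{p'}}\leq 2\|\rho\|_{L^{p'}}\,|h|$ is already linear in $|h|$. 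Combining both cases produces a single constant $c'$ independent of $h$, and multiplying by $\sup_{f\in\mathcal{F}}\|f\|_{L^p}$ yields the required $c$.

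The only subtle point is that the naive estimate based on the mean value theorem is global but loses information about support; conversely the compact support argument loses uniformity when $\xi$ and $\eta$ are far apart. The two-case split is precisely what reconciles these, and I expect this is the main (mild) obstacle. Everything else is routine use of H\"older's inequality, translation invariance of Lebesgue measure, and the boundedness hypothesis on $\mathcal{F}$.
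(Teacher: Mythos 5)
Your proof is correct and follows essentially the same route as the paper, which simply defers to the scalar-valued argument in Brezis (H\"older's inequality applied to $\rho\ast f(\xi)-\rho\ast f(\eta)$ together with a Lipschitz bound on $h\mapsto\tau_h\rho$ in $L^{p'}$), carried over verbatim to the Bochner setting. Your two-case split in $|h|$ is a harmless variant; one can also avoid it by estimating $\|\rho-\tau_h\rho\|_{L^{p'}}\leq|h|\,\|\,|\nabla\rho|\,\|_{L^{p'}}$ via Minkowski's integral inequality and translation invariance.
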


\begin{proof}
This can be proven analogously to the scalar-valued case, see e.g. the proof of \cite[Corollary 4.28]{Brezis}.
\end{proof}

\begin{lemma}\label{uniform convolution}
Let $\mathcal{F}\subset W^{1,p}(\R^d,X)$ be bounded and $(\rho_n)$ be a mollifier. Then
\begin{align*}
\sup_{f\in\mathcal{F}}\|\rho_n\ast f-f\|_{L^p(\R^d,X)}\rightarrow0
\end{align*}
as $n\rightarrow\infty$.
\end{lemma}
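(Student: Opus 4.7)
The plan is to reduce the problem to the difference-quotient estimate of Lemma \ref{easydiffquotient} (in its $\Omega=\R^d$ form, where no boundary restriction on the shift appears, obtained by exhausting $\R^d$ by relatively compact $\omega$ and passing to the monotone limit). Concretely, I would write
\begin{align*}
\rho_n\ast f(\xi)-f(\xi)=\int_{\R^d}\rho_n(\eta)\bigl(f(\xi-\eta)-f(\xi)\bigr)\,d\eta
\end{align*}
and apply Minkowski's integral inequality (valid for Bochner integrals) to obtain
\begin{align*}
\|\rho_n\ast f-f\|_{L^p(\R^d,X)}\leq\int_{\R^d}\rho_n(\eta)\,\|f(\cdot-\eta)-f\|_{L^p(\R^d,X)}\,d\eta.
\end{align*}
This reduces everything to estimating the translation difference uniformly over $\mathcal{F}$.

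For this, I would use Lemma \ref{easydiffquotient}: applied on $\R^d$, it yields $\|f(\cdot+he_j)-f\|_{L^p(\R^d,X)}\leq|h|\,\|D_jf\|_{L^p(\R^d,X)}$ for every coordinate direction $j$. A general shift $\eta=(\eta_1,\dots,\eta_d)$ is then handled by telescoping, writing $\eta=\sum_j\eta_je_j$ and applying the triangle inequality together with translation invariance of the $L^p$ norm, to obtain
\begin{align*}
\|f(\cdot-\eta)-f\|_{L^p(\R^d,X)}\leq\sum_{j=1}^{d}|\eta_j|\,\|D_jf\|_{L^p(\R^d,X)}\leq dM|\eta|,
\end{align*}
where $M:=\sup_{f\in\mathcal{F}}\max_j\|D_jf\|_{L^p(\R^d,X)}<\infty$ by boundedness of $\mathcal{F}$ in $W^{1,p}(\R^d,X)$. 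Crucially, this bound is uniform in $f\in\mathcal{F}$, which is precisely where the $W^{1,p}$-boundedness (rather than mere $L^p$-boundedness) enters.

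Plugging back in gives
\begin{align*}
\sup_{f\in\mathcal{F}}\|\rho_n\ast f-f\|_{L^p(\R^d,X)}\leq dM\int_{\R^d}\rho_n(\eta)|\eta|\,d\eta,
\end{align*}
and since $(\rho_n)$ is a mollifier, $\mathrm{supp}(\rho_n)\subset B(0,\varepsilon_n)$ with $\varepsilon_n\to0$, so the right-hand side is bounded by $dM\varepsilon_n\to0$. There is no serious obstacle; the only mild point to verify is that Lemma \ref{easydiffquotient} applies on all of $\R^d$ (which follows by taking $\omega\nearrow\R^d$ and using monotone convergence) and that Minkowski's inequality is available for the Bochner-valued integral, which is standard.
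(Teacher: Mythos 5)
Your proof is correct and follows essentially the same route as the paper: the paper also derives the uniform translation estimate $\|f(\cdot+h)-f\|_{L^p(\R^d,X)}\leq C|h|$ from Lemma \ref{easydiffquotient} and then invokes the standard scalar-valued mollifier argument (Step 1 of the proof of Theorem 4.26 in Brezis), which is exactly the Minkowski-plus-shrinking-support computation you have written out explicitly.
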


\begin{proof}
From Lemma \ref{easydiffquotient} it follows that there exists a $C>0$ such that for all $h\in\R^d$ we have
\begin{align*}
\|u(\cdot+h)-u\|_{L^p(\R^d,X)}\leq C|h|
\end{align*}
for every $u\in\mathcal{F}$. The result now follows as in the scalar-valued case, see e.g. Step 1 of the proof of \cite[Theorem 4.26]{Brezis}.
\end{proof}

The following theorem is particularly important as its scalar-valued counterpart is frequently used in the theory of Sobolev spaces. In the last section we will use it again. For the notion of \emph{uniform Lipschitz boundary} we refer to \cite[Definition 12.10]{Leoni}. For bounded sets, uniform Lipschitz boundary is the same as Lipschitz boundary.

\begin{theorem}[Extension Theorem]\label{extension theorem}
Let $\Omega$ be an open set with uniform Lipschitz boundary. Then for any $1\leq p<\infty$ there exists a bounded linear operator
\begin{align*}
\mathcal{E}:W^{1,p}(\Omega,X)\rightarrow W^{1,p}(\R^d,X)
\end{align*}
such that $\mathcal{E}(u)_{|\Omega}=u$ for all $u\in W^{1,p}(\Omega,X)$.
\end{theorem}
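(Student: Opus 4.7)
The plan is to follow the classical scalar proof line by line, with the observation that every ingredient respects Banach-space-valued functions without any additional hypothesis on $X$. The argument proceeds in three stages: reflection across a flat boundary, reduction of a Lipschitz graph boundary to the flat case via a bi-Lipschitz change of variables, and patching via a partition of unity adapted to the uniform Lipschitz covering of $\partial\Omega$.

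For the half-space $\R^d_+ := \{(x',x_d) : x_d > 0\}$ I first define the reflection
\begin{align*}
\mathcal{E}_0 u(x', x_d) := \begin{cases} u(x', x_d), & x_d > 0, \\ u(x', -x_d), & x_d < 0. \end{cases}
\end{align*}
Testing against $\varphi \in \Cci(\R^d, \R)$ and splitting the integral at $\{x_d = 0\}$ shows directly that $\mathcal{E}_0 u \in W^{1,p}(\R^d, X)$ with $\|\mathcal{E}_0 u\|_{W^{1,p}(\R^d,X)} \leq 2\|u\|_{W^{1,p}(\R^d_+, X)}$; the codomain $X$ plays no role here. For general $\Omega$, the uniform Lipschitz boundary condition supplies a uniformly locally finite open cover $(U_i)_{i \geq 0}$ of $\overline\Omega$ with $U_0 \ssubset \Omega$ such that each $U_i$ for $i \geq 1$ is, after a rigid motion, a cylinder in which $\Omega \cap U_i$ has the form $\{(x',x_d) \in U_i : x_d > \gamma_i(x')\}$ for a Lipschitz function $\gamma_i$ whose Lipschitz constant is bounded uniformly in $i$. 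Choose a subordinate partition of unity $(\varphi_i)$ with $\|\varphi_i\|_{W^{1,\infty}}$ uniformly bounded. The map $\Phi_i(x', x_d) := (x', x_d + \gamma_i(x'))$ is then a bi-Lipschitz homeomorphism of $\R^d$ sending $\R^d_+$ onto the local epigraph of $\gamma_i$.

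The key vector-valued step is that, for any bi-Lipschitz $\Phi$ of $\R^d$, composition $v \mapsto v \circ \Phi$ defines a bounded operator between the corresponding Sobolev spaces $W^{1,p}(\cdot, X)$, with the expected chain rule. I would prove this using Theorem \ref{lines}: pick a BL-representative $v^*$ of $v$; the curves $t \mapsto \Phi(\xi + t e_j)$ are one-dimensional Lipschitz, so $v^* \circ \Phi$ is absolutely continuous on almost every axis-parallel line, and at every point of joint differentiability of $v^*$ and of $\Phi$ the classical chain rule yields the pointwise identity $\partial_j(v^* \circ \Phi) = \sum_k (\partial_k v^*)(\Phi)\, \partial_j \Phi^k$. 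Combined with the standard bi-Lipschitz change-of-variables formula this produces the required $L^p$ bound, and Theorem \ref{lines}(ii) reassembles $v \circ \Phi$ as a distributional $W^{1,p}$-function globally.

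With this in hand, the extension
\begin{align*}
\mathcal{E} u := \varphi_0 u + \sum_{i \geq 1} \bigl[\mathcal{E}_0\bigl((\varphi_i u) \circ \Phi_i\bigr)\bigr] \circ \Phi_i^{-1},
\end{align*}
understood with zero extension outside the relevant supports, satisfies $\mathcal{E} u|_\Omega = \sum_i \varphi_i u = u$; a triangle-inequality estimate using the uniform bounds above and the bounded overlap of the $U_i$ yields $\|\mathcal{E} u\|_{W^{1,p}(\R^d,X)} \leq C \|u\|_{W^{1,p}(\Omega,X)}$ with $C$ independent of $u$. I expect the main obstacle to be the bookkeeping needed to make all constants uniform in $i$ when $\Omega$ is unbounded --- which is precisely what the hypothesis of \emph{uniform} Lipschitz boundary is designed to ensure. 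Once the scalar proof is recast in the above language, the vector-valued case follows essentially automatically from Theorem \ref{lines}.
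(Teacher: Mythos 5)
Your overall architecture --- even reflection across a flat boundary, flattening a Lipschitz graph by $\Phi_i(x',x_d)=(x',x_d+\gamma_i(x'))$, and a partition of unity with constants made uniform by the uniform Lipschitz hypothesis --- is exactly the scalar proof that the paper invokes (it simply cites Leoni, Theorem 12.15, and Amann and asserts that it adapts). The gap is in your justification of the one genuinely vector-valued step, the bi-Lipschitz composition lemma. Theorem \ref{lines} gives you a representative $v^*$ that is absolutely continuous along almost every \emph{axis-parallel} line; but $\Phi_i$ maps the lines in the directions $e_1,\ldots,e_{d-1}$ to Lipschitz curves that are not axis-parallel, and the BL-property says nothing about $v^*$ along such curves: $v^*$ is only an almost-everywhere-defined function, and its restriction to a Lebesgue-null curve outside the distinguished family of lines is completely uncontrolled. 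So the assertion that ``$v^*\circ\Phi$ is absolutely continuous on almost every axis-parallel line'' does not follow from what you have. The appeal to the classical chain rule ``at every point of joint differentiability of $v^*$ and $\Phi$'' fails for the same reason: a BL-representative has partial derivatives a.e.\ along the good lines, but need not be totally differentiable on a set of full measure (this already fails for scalar $W^{1,p}$ with $p\leq d$, and for $X$-valued functions without the Radon-Nikodym Property you cannot even differentiate Lipschitz curves a.e., which is the central theme of the paper).

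The lemma you want is true and can be repaired within your plan. First note that mollification gives density of smooth functions in $W^{1,p}$ locally (the Meyers--Serrin argument is purely a convolution argument and works verbatim for Bochner spaces). For smooth $v$, the composition $v\circ\Phi$ is locally Lipschitz and, at every point where the \emph{scalar-valued} components of $\Phi$ are differentiable --- almost everywhere by Rademacher, no hypothesis on $X$ needed --- it is partially differentiable with $\partial_j(v\circ\Phi)=\sum_k(\partial_k v)(\Phi)\,\partial_j\Phi^k$; Theorem \ref{lines}(ii) then places $v\circ\Phi$ in $W^{1,p}$ with the expected bound. For general $v$ take smooth $v_n\rightarrow v$ in $W^{1,p}$ and use the bi-Lipschitz change of variables (which preserves null sets and $L^p$ up to Jacobian bounds) to pass to the limit in $v_n\circ\Phi$ and in the candidate derivative; this yields the chain rule and the uniform estimate. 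Alternatively one can follow Leoni's treatment of the graph case directly, which avoids the general bi-Lipschitz lemma altogether. A smaller point: in the half-space step, ``testing against $\varphi$ and splitting at $\{x_d=0\}$'' needs the standard extra ingredient (translate $u$ upward, or approximate by functions smooth up to the boundary) to see that no boundary term arises; this too is insensitive to the codomain. With these repairs your argument becomes precisely the adaptation the paper has in mind.
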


\begin{proof}
This can be shown analogously to the scalar-valued case, see e.g. \cite[Theorem 12.15]{Leoni}, also cf. \cite[Theorem 2.4.5]{Amann1995}.
\end{proof}

\begin{proof}[Proof of Theorem \ref{AubinLions}]
We have to show that
\begin{align*}
B:=\{f\in W^{1,p}(\Omega,X)\cap L^p(\Omega,Y), \|f\|_{W^{1,p}(\Omega,X)}\leq1, \|f\|_{L^p(\Omega,Y)}\leq1\}
\end{align*}
is precompact in $L^p(\Omega,X)$. Let $\mathcal{E}$ be the extension operator in Theorem \ref{extension theorem}. We will show the theorem in two steps: First we show that for any $\omega\ssubset\Omega$ the set $B_{|\omega}$ is precompact in $L^p(\omega,X)$. For this let $\varepsilon>0$ and use Lemma \ref{uniform convolution} to choose $n_0>\textnormal{dist}(\omega,\partial\Omega)^{-1}$ such that
\begin{align*}
\|\mathcal{E}f-\rho_{n_0}\ast\mathcal{E}f\|_{L^p(\R^d,X)}\leq\varepsilon
\end{align*}
for all $f\in B$. For all $\xi\in\overline{\omega}$ we have
\begin{align*}
\|(\rho_{n_0}\ast\mathcal{E}f)(\xi)\|_Y\leq\|\rho_{n_0}\|_{L^q(\Omega,\R)}\|f\|_{L^p(\Omega,Y)}\leq\|\rho_{n_0}\|_{L^q(\Omega,\R)}=:C
\end{align*}
where $\pq$. Since $Y\hookrightarrow X$ is compact the set
\begin{align*}
K:=\{y\in Y,\|y\|_Y\leq C\}
\end{align*}
is precompact in $X$. Further by Lemma \ref{equicontinuity} the set
\begin{align*}
H:=\{\rho_{n_0}\ast\mathcal{E}f,f\in B\}
\end{align*}
is equicontinuous, thus it is precompact in $C(\overline{\omega},K)$ by the Arzela-Ascoli Theorem. Consequently we find $g_j\in L^p(\omega,X)$ such that $H\subset\bigcup_{j=1}^{m}B_{L^p(\omega,X)}(g_j,\varepsilon)$. Let $f\in B$ and choose $j\in{1,\ldots,m}$ such that $\|\rho_{n_0}\ast\mathcal{E}f-g_j\|_{L^p(\omega,X)}<\varepsilon$. Hence
\begin{align*}
\|f-g_j\|_{L^p(\omega,X)}\leq\|\mathcal{E}f-\rho_{n_0}\ast\mathcal{E}f\|_{L^p(\omega,X)}+\|\rho_{n_0}\ast\mathcal{E}f-g_j\|_{L^p(\omega,X)}<2\varepsilon
\end{align*}
implying that $B_{|\omega}\subset\bigcup_{j=1}^{m}B_{L^p(\omega,X)}(g_j,2\varepsilon)$ is precompact.\\

Using this we now show that $B$ is precompact in $L^p(\Omega,X)$. Let $\varepsilon>0$ and use Lemma \ref{uniform convolution} to choose $n_0\in\N$ such that
\begin{align*}
\|\mathcal{E}f-\rho_{n_0}\ast\mathcal{E}f\|_{L^p(\R^d,X)}\leq\varepsilon
\end{align*}
for all $f\in B$. For all $\xi\in\Omega$ we have
\begin{align*}
\|(\rho_{n_0}\ast\mathcal{E})(\xi)\|_X&\leq \|\rho_{n_0}\|_{L^q(\R^d,\R)}\|\mathcal{E}f\|_{L^p(\R^d,X)}\\
&\leq \|\rho_{n_0}\|_{L^q(\R^d,\R)}\|\mathcal{E}\|_\mathcal{L}=:C
\end{align*}
independently of $f\in B$. Let $\omega\ssubset\Omega$ such that $C|\Omega\backslash\omega|^{\frac{1}{p}}<\varepsilon$, then $\|\rho_{n_0}\ast\mathcal{E}f\|_{L^p(\Omega\backslash\omega,X)}<\varepsilon$ for all $f\in B$ by the above estimate. By the first step there exist $g_j\in L^p(\omega,X)$ such that
\begin{align*}
B_{|\omega}\subset\bigcup_{j=1}^{m}B_{L^p(\omega,X)}(g_j,\varepsilon).
\end{align*}
Define
\begin{align*}
G_j(\xi):=\begin{cases}
g_j(\xi),&\textnormal{ if }\xi\in\omega\\
0,&\textnormal{ otherwise.}
\end{cases}
\end{align*}
Let $f\in B$, then there exists a $j\in\{1,\ldots,m\}$ such that $\|f_{|\omega}-g_j\|_{L^p(\omega,X)}\leq\varepsilon$. Thus
\begin{align*}
\|f-G_j\|_{L^p(\Omega,X)}&\leq\varepsilon+\|f\|_{L^p(\Omega\backslash\omega,X)}\\
&\leq\varepsilon+\|\mathcal{E}f-\rho_{n_0}\ast\mathcal{E}f\|_{L^p(\R^d,X)}+\|\rho_{n_0}\ast\mathcal{E}f\|_{L^p(\Omega\backslash\omega,X)}\\
&\leq3\varepsilon
\end{align*}
proving that $B\subset\bigcup_{j=1}^{m}B_{L^p(\omega,X)}(G_j,3\varepsilon)$. This finishes the proof.
\end{proof}

\section{Weak Dirichlet Boundary Data}\label{Dirichlet}
From now on let $1\leq p<\infty$. As usual we define the space $W^{1,p}_0(\Omega,X)$ as the closure of $\Cci(\Omega,X)$ in the $W^{1,p}(\Omega,X)$-norm. Our aim is to give some characterizations and properties of this space. We will need the following proposition which is of interest on its own.

\begin{proposition}\label{norm continuous}
The mapping $\|\cdot\|_X:W^{1,p}(\Omega,X)\rightarrow W^{1,p}(\Omega,\R)$ is continuous.
\end{proposition}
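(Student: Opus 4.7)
Let $u_n \to u$ in $W^{1,p}(\Omega, X)$. The $L^p$-convergence $\|u_n\|_X \to \|u\|_X$ is immediate from the reverse triangle inequality $|\|u_n(\xi)\|_X - \|u(\xi)\|_X| \leq \|u_n(\xi) - u(\xi)\|_X$, so the real content is to show $D_j\|u_n\|_X \to D_j\|u\|_X$ in $L^p(\Omega, \R)$ for each direction $j$. The pointwise formula $D_j\|v\|_X = \langle D_j v, J(v)\rangle$ from Theorem \ref{norm weak derivative} cannot be used in a naive way, since the duality map $J$ is not continuous in general; this lack of continuity is the main obstacle.

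My plan is to approximate $D_j\|v\|_X$ by a family of difference quotients that \emph{do} depend Lipschitz-continuously on $v$. For $t \in \R \setminus \{0\}$ and $v \in W^{1,p}(\Omega, X)$, define
\[
\phi_t(v)(\xi) := \frac{\|v(\xi) + tD_jv(\xi)\|_X - \|v(\xi)\|_X}{t}.
\]
Two estimates follow from $|\|a\|_X - \|b\|_X| \leq \|a - b\|_X$: the pointwise bound $|\phi_t(v)(\xi)| \leq \|D_jv(\xi)\|_X$, hence $\phi_t(v) \in L^p(\Omega, \R)$; and the Lipschitz estimate
\[
\|\phi_t(v) - \phi_t(w)\|_{L^p(\Omega, \R)} \leq \tfrac{2}{|t|}\|v - w\|_{L^p(\Omega, X)} + \|D_jv - D_jw\|_{L^p(\Omega, X)},
\]
valid for each fixed $t$. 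Since $s \mapsto \|v(\xi) + sD_jv(\xi)\|_X$ is convex in $s \in \R$, the slope $\phi_t(v)(\xi)$ is non-decreasing in $t \in \R \setminus \{0\}$. Combining this monotonicity with the chain rule of Theorem \ref{Gateaux derivative} applied to $F = \|\cdot\|_X$, which guarantees $D^+_{D_jv(\xi)}\|v(\xi)\|_X = D^-_{D_jv(\xi)}\|v(\xi)\|_X = D_j\|v\|_X(\xi)$ almost everywhere, produces the sandwich
\[
\phi_{-s}(v) \;\leq\; D_j\|v\|_X \;\leq\; \phi_s(v) \qquad\text{a.e., for every } s > 0,
\]
with both sides converging pointwise a.e.\ to $D_j\|v\|_X$ as $s \to 0^+$. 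Dominated convergence, using the common bound $2\|D_jv\|_X \in L^p$, upgrades this to $\|\phi_s(v) - \phi_{-s}(v)\|_{L^p(\Omega, \R)} \to 0$ as $s \to 0^+$.

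To close the argument, apply the sandwich to both $u_n$ and $u$ to obtain
\[
\phi_{-s}(u_n) - \phi_s(u) \;\leq\; D_j\|u_n\|_X - D_j\|u\|_X \;\leq\; \phi_s(u_n) - \phi_{-s}(u).
\]
Taking $L^p$-norms and letting $n \to \infty$ at fixed $s > 0$, the Lipschitz estimate for $\phi_{\pm s}$ gives
\[
\limsup_{n \to \infty} \bigl\| D_j\|u_n\|_X - D_j\|u\|_X \bigr\|_{L^p(\Omega, \R)} \;\leq\; 2\|\phi_s(u) - \phi_{-s}(u)\|_{L^p(\Omega, \R)},
\]
and the right-hand side tends to $0$ as $s \to 0^+$. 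The crux is that convexity of $\|\cdot\|_X$ sandwiches the discontinuous functional $u \mapsto D_j\|u\|_X$ between the Lipschitz-continuous functionals $u \mapsto \phi_{\pm s}(u)$, whose gap collapses in $L^p$; this replaces the discontinuous duality map $J$ by an object that can be handled.
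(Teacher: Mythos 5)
Your argument is correct, and it takes a genuinely different route from the paper. The paper also reduces matters to the derivatives, but proceeds pointwise: it passes to a subsequence with $u_n\rightarrow u$, $D_ju_n\rightarrow D_ju$ a.e.\ and with a common $L^p$ dominant, reduces to separable $X$, picks $x_n'\in J(u_n(\xi))$, extracts $\sigma(X',X)$-convergent subsequences via Banach--Alaoglu whose limits lie in $J(u(\xi))$, and then uses the formula $D_j\|u_n\|_X=\langle D_ju_n,x_n'\rangle$ of Theorem \ref{norm weak derivative} together with dominated convergence. You instead exploit convexity of the norm: the difference quotients $\phi_{\pm s}$ are Lipschitz from $W^{1,p}(\Omega,X)$ to $L^p(\Omega,\R)$ for each fixed $s>0$, they sandwich $D_j\|\cdot\|_X$ by monotonicity of slopes, and the a.e.\ coincidence $D^+_{D_ju}\|\cdot\|_X(u)=D^-_{D_ju}\|\cdot\|_X(u)=D_j\|u\|_X$ from Theorem \ref{Gateaux derivative} (applicable since $F(0)=0$) makes the gap $\|\phi_s(u)-\phi_{-s}(u)\|_{L^p}$ collapse as $s\rightarrow0^+$ by dominated convergence (here $p<\infty$, as assumed in this section, is needed --- just as in the paper's proof). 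What each approach buys: the paper's proof stays close to the explicit duality-map formula and shows directly that the derivatives converge pointwise a.e.\ along subsequences, at the cost of the separability reduction, Banach--Alaoglu, and a double subsequence argument; yours avoids the discontinuous duality map, subsequences and separability altogether, replacing them with a soft and reusable principle (a map sandwiched between equi-Lipschitz approximants whose gap vanishes in $L^p$ is continuous), and it yields an explicit $\limsup$ bound $2\|\phi_s(u)-\phi_{-s}(u)\|_{L^p(\Omega,\R)}$ at each scale $s$. Both proofs rest on the same nontrivial input, namely Theorem \ref{norm weak derivative}/\ref{Gateaux derivative}, so neither is circular, and your steps (the pointwise bound $|\phi_t(v)|\leq\|D_jv\|_X$, the Lipschitz estimate, the monotonicity of convex difference quotients, and the final three-term splitting) are all correct.
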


\begin{proof}
Let $u_n\rightarrow u$ in $W^{1,p}(\Omega,X)$. Without loss of generality we may assume that $u_n\rightarrow u$ and $D_ju_n\rightarrow D_ju$ pointwise almost everywhere and that they are pointwise dominated by a $L^p$-function. If this is not the case, we may apply a subsequence argument since every subsequence of $u_n$ has a subsequence with these properties. It is obvious that $\|u_n\|_X\rightarrow\|u\|_X$ in $L^p(\Omega,\R)$, hence it only remains to show convergence of the derivatives. Since $|D_j\|u_n\|_X|\leq\|D_ju_n\|_X$ by Corollary \ref{norm estimate} the functions $D_j\|u_n\|_X$ are uniformly dominated by an $L^p$-function. To apply the Dominated Convergence Theorem we need to show that they converge almost everywhere to $D_j\|u\|_X$. Let $\xi\in \Omega\backslash N$ where $N\subset\Omega$ is a negligible set such that $u_n$ and $D_ju_n$ converge pointwise outside of $N$. Chose $x_n'\in J(u_n(\xi))$. Since we are working with a countable number of measurable functions, we may assume that $X$ is separable. The Banach-Alaoglu Theorem implies that any subsequence $(x_{n_k}')$ of $(x_n')$ has a subsequence $(x_{n_{k_l}}')$ which converges in the $\sigma(X',X)$-topology to an element $x'\in X'$. One easily sees that $x'\in J(u(\xi))$. Now we use Theorem \ref{norm weak derivative} to deduce that
\begin{align*}
D_j\|u_{n_{k_l}}(\xi)\|_X=\langle D_j u_{n_{k_l}}(\xi),x_{n_{k_l}}'\rangle\rightarrow\langle D_ju(\xi),x'\rangle=D_j\|u(\xi)\|_X.
\end{align*}
We obtain that $D_j\|u_n\|_X\rightarrow D_j\|u\|_X$ pointwise outside of $N$ as $n\rightarrow\infty$ since the subsequences where chosen arbitrarily.
\end{proof}

If $u\in W^{1,p}_0(\Omega,X)$ and $\varphi_n\in\Cci(\Omega,X)$ converges to $u$ then Proposition \ref{norm continuous} implies that $\|\varphi_n\|_X$ converges to $\|u\|_X$, hence $\|u\|_X\in W^{1,p}_0(\Omega,\R)$. The converse is true as well. We will need the following lemmas.

\begin{lemma}
Let $u\in W^{1,p}(\Omega,X)$ and $\psi\in W^{1,p}(\Omega,\R)$ such that $\psi u$, $(D_j\psi)u$ and $\psi D_ju$ are in $L^p(\Omega,\R)$, then $\psi u\in W^{1,p}(\Omega,X)$ with
\begin{align*}
D_j(\psi u)=(D_j\psi)u+\psi D_ju.
\end{align*}
\end{lemma}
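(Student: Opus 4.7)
The plan is to reduce the claim to the scalar-valued Leibniz rule via duality. Fix $\varphi \in \Cci(\Omega, \R)$; the goal is the vector identity
\[
\int_\Omega \psi u \, \partial_j \varphi = -\int_\Omega \bigl[(D_j\psi) u + \psi D_j u\bigr] \varphi \quad \text{in } X,
\]
where both Bochner integrals are well-defined since the three product functions lie in $L^p(\Omega, X)$ by hypothesis and $\varphi, \partial_j \varphi$ are bounded with compact support. By Hahn-Banach it suffices to verify this identity after pairing with an arbitrary $x' \in X'$. The Bochner integral commutes with $x'$, and $\langle u, x'\rangle \in W^{1,p}(\Omega, \R)$ with $D_j\langle u, x'\rangle = \langle D_j u, x'\rangle$ by a direct check against test functions. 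The paired identity therefore reduces to the distributional Leibniz rule for the pair $\psi, \langle u, x'\rangle \in W^{1,p}(\Omega, \R)$, whose required integrability follows from the lemma's hypotheses; for instance $|\psi \langle u, x'\rangle| \leq \|x'\|_{X'} \|\psi u\|_X \in L^p(\Omega, \R)$, and similarly for the two other products.

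The main obstacle is thus the scalar Leibniz rule under $L^p$-integrability of the products rather than the usual $L^\infty$-hypothesis on one factor. I would handle this by a truncation argument: for $N \in \N$ the cut-offs $\psi^N := \max(-N, \min(\psi, N))$ and $v^N := \max(-N, \min(\langle u, x'\rangle, N))$ lie in $W^{1,p}(\Omega, \R) \cap L^\infty$, with derivatives $(D_j\psi)\mathbf{1}_{\{|\psi|<N\}}$ and $\langle D_j u, x'\rangle\mathbf{1}_{\{|\langle u,x'\rangle|<N\}}$ respectively. For bounded $\psi^N, v^N$ the product rule $D_j(\psi^N v^N) = (D_j\psi^N) v^N + \psi^N D_j v^N$ follows by mollification, the uniform $L^\infty$-bound furnishing the required dominator in the standard smooth computation. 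Passing $N \to \infty$ via Dominated Convergence, with the $L^p$-dominators $\|\psi u\|_X$, $\|(D_j\psi) u\|_X$ and $\|\psi D_j u\|_X$ supplied by the hypotheses, then yields the scalar Leibniz rule at the needed level of integrability. Once the scalar identity holds for every $x' \in X'$, Hahn-Banach returns the vector identity, and the assumed $L^p$-integrability of the right-hand side gives $\psi u \in W^{1,p}(\Omega, X)$ with the claimed distributional derivative.
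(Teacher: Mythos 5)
Your proposal is correct and follows essentially the same route as the paper: reduce to the scalar Leibniz rule by pairing with $x'\in X'$ and conclude via Hahn--Banach. The only difference is that you prove the scalar product rule at the needed integrability level yourself (by truncation and dominated convergence), whereas the paper simply cites Gilbarg--Trudinger, Equation (7.18), which is exactly that statement.
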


\begin{proof}
If $X=\R$ this follows from \cite[Equation (7.18)]{GilbargTrudinger}. Applying this to $\langle u,x'\rangle$ for arbitrary $x'\in X'$ the result follows from the Hahn-Banach Theorem.
\end{proof}

\begin{lemma}\label{quotient rule}
Let $u\in W^{1,p}(\Omega,X)$ and let $\hat{\varphi}\in \Cci(\Omega,\R)_+$. Define $\varphi:=\hat{\varphi}\wedge\|u\|_X$. Then the function
\begin{align*}
v:=\frac{u}{\|u\|_X}\varphi\,1_{u\neq0}
\end{align*}
is in $W^{1,p}(\Omega,X)$ and the calculus rule
\begin{align*}
D_jv=\left(\frac{(D_ju)\|u\|_X-uD_j\|u\|_X}{\|u\|_X^2}\,\varphi+\frac{u}{\|u\|_X}D_j\varphi\right)\,1_{u\neq0}
\end{align*}
holds.
\end{lemma}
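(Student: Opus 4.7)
The strategy is to regularize: for $\varepsilon>0$ set
\[
v_\varepsilon := \frac{\varphi\,u}{\|u\|_X + \varepsilon},
\]
verify directly that $v_\varepsilon \in W^{1,p}(\Omega,X)$ via the formal quotient rule, and then pass to the limit $\varepsilon\to 0^+$. The crucial quantitative tool is the pointwise inequality $\varphi\leq\|u\|_X$, built into the definition $\varphi = \hat\varphi\wedge\|u\|_X$: it yields the uniform bound $\varphi/(\|u\|_X+\varepsilon)\leq 1$, which is what makes dominated convergence work uniformly in $\varepsilon$ on both $\{u\neq 0\}$ and $\{u=0\}$.

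As a preliminary I would record that $\varphi\in W^{1,p}(\Omega,\R)$ with compact support in $\operatorname{supp}\hat\varphi\ssubset\Omega$. Indeed, $\hat\varphi$ and $\|u\|_X$ are scalar $W^{1,p}$ functions (Corollary \ref{norm W1p}), so is their infimum, and Corollary \ref{norm estimate} gives the usable bound $|D_j\varphi|\leq|D_j\hat\varphi|+\|D_ju\|_X\in L^p$.

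Next, writing $v_\varepsilon = \psi_\varepsilon\,u$ with $\psi_\varepsilon:=\varphi/(\|u\|_X+\varepsilon)$: the map $t\mapsto 1/(t+\varepsilon)$ is Lipschitz on $[0,\infty)$, hence $1/(\|u\|_X+\varepsilon)\in W^{1,p}_{\mathrm{loc}}(\Omega,\R)\cap L^\infty$ by the scalar chain rule, and multiplying by the compactly supported scalar Sobolev function $\varphi$ (scalar product rule on a bounded open neighbourhood of $\operatorname{supp}\hat\varphi$) gives $\psi_\varepsilon\in W^{1,p}(\Omega,\R)$ with the formal quotient derivative. The integrability hypotheses of the preceding vector-scalar product rule lemma are readily checked (every product is dominated by a compactly supported multiple of $|D_j\varphi|+\|D_ju\|_X/\varepsilon$), giving $v_\varepsilon\in W^{1,p}(\Omega,X)$ with
\[
D_jv_\varepsilon = \frac{\varphi\,D_ju}{\|u\|_X+\varepsilon}+\frac{u\,D_j\varphi}{\|u\|_X+\varepsilon}-\frac{\varphi\,u\,D_j\|u\|_X}{(\|u\|_X+\varepsilon)^2}.
\]

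Finally, passing to the limit $\varepsilon\to 0^+$: using $\varphi\leq\|u\|_X\leq\|u\|_X+\varepsilon$ together with $|D_j\|u\|_X|\leq\|D_ju\|_X$, each of the three terms above is pointwise dominated, uniformly in $\varepsilon$, by the fixed $L^p$-function $|D_j\varphi|+2\|D_ju\|_X$. Pointwise on $\{u\neq 0\}$ each term converges to the respective term of the claimed expression; on $\{u=0\}$ one has $\|u\|_X=0$ and therefore $\varphi=0$, so every term of $D_jv_\varepsilon$ vanishes for all $\varepsilon>0$, matching the factor $1_{u\neq 0}$ in the claim. Similarly $v_\varepsilon\to v$ in $L^p$, dominated by $\hat\varphi$. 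Since distributional derivatives are continuous with respect to $L^p$-convergence, we conclude $v\in W^{1,p}(\Omega,X)$ with the stated formula. The main delicate point is precisely this uniform $L^p$-domination: naive estimates produce $1/\varepsilon$-type bounds, and only the built-in inequality $\varphi\leq\|u\|_X$ rescues the argument—which is also why the truncation $\hat\varphi\wedge\|u\|_X$ (rather than $\hat\varphi$ itself) is needed and why the indicator $1_{u\neq 0}$ appears in the result.
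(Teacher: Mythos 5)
Your proposal is correct and follows essentially the same route as the paper: the paper likewise regularizes with $f_\varepsilon(t)=1/(t+\varepsilon)$, obtains $D_j\bigl((f_\varepsilon\circ\|u\|_X)\varphi u\bigr)$ from the scalar chain rule together with the preceding vector--scalar product lemma, and passes to the limit $\varepsilon\to0$ by dominated convergence, exactly exploiting $\varphi\le\|u\|_X$ as you emphasize. Passing to the limit in $L^p$ and invoking closedness of the distributional derivative, rather than in the weak formulation against test functions as the paper does, is only a cosmetic difference.
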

\begin{proof}
Note that the functions
\begin{align*}
\frac{u}{\|u\|_X}\varphi\,1_{u\neq0},\ \frac{D_ju}{\|u\|_X}\varphi\,1_{u\neq0}, \frac{u}{\|u\|_X}D_j\varphi\,1_{u\neq0}\text{ and } \frac{uD_j\|u\|_X}{\|u\|_X^2}\varphi\,1_{u\neq0}
\end{align*}
are all in $L^p(\Omega,X)$. Let $\varepsilon>0$ and define
\begin{align*}
f_{\varepsilon}:\R_+\rightarrow\R_+\\
t\mapsto\frac{1}{t+\varepsilon}.
\end{align*}
We have that $f_{\varepsilon}\in C^1(\R_+)$ and that $|f_{\varepsilon}'|$ is bounded by $\frac{1}{\varepsilon^2}$. The usual chain rule, see e.g. \cite[Proposition 9.5]{Brezis}, yields that $f_{\varepsilon}\circ\|u\|_X\in W^{1,p}(\Omega,\R)$ and
\begin{align*}
D_j(f_{\varepsilon}\circ\|u\|_X)=\frac{-1}{(\|u\|_X+\varepsilon)^2}D_j\|u\|_X.
\end{align*}
The preceding lemma implies that $\varphi u\in W^{1,p}(\Omega,X)$ and that the usual product rule holds. Using the lemma once more we obtain that $(f_{\varepsilon}\circ\|u\|_X)\varphi u\in W^{1,p}(\Omega,X)$ and  the usual product rule holds. This means that

\begin{align*}
&\int_{\Omega}{(f_{\varepsilon}\circ\|u\|_X)\varphi u\partial_j\psi}\\
&=-\int_{\Omega}{\varphi\, D_jf_{\varepsilon}\circ\|u\|_Xu\psi}-\int_{\Omega}{D_j(\varphi u)f_{\varepsilon}\circ\|u\|_X\psi}
\end{align*}
for all $\psi\in\Cci(\Omega,\R)$. Letting $\varepsilon\rightarrow0$ by the Dominated Convergence Theorem we obtain
\begin{align*}
&\int_{\Omega}{\varphi\frac{1}{\|u\|_X}u1_{u\neq0}\partial_j\psi}\\
&=\int_{\Omega}{\varphi\, \frac{1}{\|u\|_X^2}D_j\|u\|_Xu\psi}-\int_{\Omega}{D_j(\varphi u)\frac{1}{\|u\|_X}\psi}.
\end{align*}
This proves the claim.
\end{proof}

\begin{theorem}\label{W1p0 norm}
Let $u\in W^{1,p}(\Omega,X)$, then $u\in W^{1,p}_0(\Omega,X)$ if and only if $\|u\|_X\in W^{1,p}_0(\Omega,\R)$
\end{theorem}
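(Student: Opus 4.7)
The forward implication is essentially immediate from Proposition \ref{norm continuous}. I would pick $\varphi_n \in \Cci(\Omega,X)$ with $\varphi_n \to u$ in $W^{1,p}(\Omega,X)$; then by the continuity statement, $\|\varphi_n\|_X \to \|u\|_X$ in $W^{1,p}(\Omega,\R)$. Each function $\|\varphi_n\|_X$ lies in $W^{1,p}(\Omega,\R)$ (by Corollary \ref{norm W1p}) and has compact support in $\Omega$; such a function is in $W^{1,p}_0(\Omega,\R)$ by a standard mollification argument. Since $W^{1,p}_0(\Omega,\R)$ is closed, $\|u\|_X\in W^{1,p}_0(\Omega,\R)$.

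For the converse, my plan is to build a compactly supported approximation of $u$ using the quotient formula of Lemma \ref{quotient rule}. Assuming $\|u\|_X\in W^{1,p}_0(\Omega,\R)$, I would pick $\hat{\varphi}_n\in\Cci(\Omega,\R)_+$ with $\hat{\varphi}_n\to\|u\|_X$ in $W^{1,p}(\Omega,\R)$ (possible since $\|u\|_X\geq 0$ and the cone of nonnegative test functions is dense in the positive part of $W^{1,p}_0$). Set $\varphi_n:=\hat{\varphi}_n\wedge\|u\|_X$; then $\varphi_n\to\|u\|_X$ in $W^{1,p}(\Omega,\R)$ (continuity of the lattice operation, together with $\|u\|_X\wedge\|u\|_X=\|u\|_X$) and $\varphi_n$ has compact support in $\Omega$. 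Define
\[
v_n:=\frac{u}{\|u\|_X}\varphi_n\,1_{u\neq 0}.
\]
Lemma \ref{quotient rule} gives $v_n\in W^{1,p}(\Omega,X)$ and an explicit formula for $D_jv_n$. Since $\mathrm{supp}\,v_n\subset\mathrm{supp}\,\varphi_n\ssubset\Omega$, mollification yields $v_n\in W^{1,p}_0(\Omega,X)$.

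The crux is then to prove $v_n\to u$ in $W^{1,p}(\Omega,X)$. Passing to a subsequence so that $\hat{\varphi}_n$ and $D_j\hat{\varphi}_n$ converge pointwise a.e.\ and are dominated by $L^p$-functions, it is easy to see $v_n\to u$ in $L^p(\Omega,X)$ by dominated convergence, using $\|v_n\|_X\leq\|u\|_X$. For the derivatives, the formula of Lemma \ref{quotient rule} can be rewritten as
\[
D_jv_n = \frac{\varphi_n}{\|u\|_X}D_ju\,1_{u\neq 0} + \frac{u}{\|u\|_X}\bigl(D_j\varphi_n - \tfrac{\varphi_n}{\|u\|_X}D_j\|u\|_X\bigr)1_{u\neq 0},
\]
and each piece should converge a.e.\ to the corresponding piece of $D_ju\cdot 1_{u\neq 0}$, with uniform domination by $\|D_ju\|_X + |D_j\|u\|_X| + \sup_n|D_j\varphi_n|$ (after a further subsequence extraction).

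The single step that requires care is identifying the limit with $D_ju$ rather than $D_ju\cdot 1_{u\neq 0}$; equivalently, showing $D_ju=0$ a.e.\ on $\{u=0\}$. This I would obtain by a weak-to-strong reduction: for each $x'\in X'$, $\langle u,x'\rangle\in W^{1,p}(\Omega,\R)$ vanishes on $\{u=0\}$, so by the scalar Stampacchia-type result $\langle D_ju,x'\rangle=D_j\langle u,x'\rangle=0$ a.e.\ on $\{u=0\}$. Reducing to separable $X$ via the essential range of $u$ and $D_ju$ and using a countable norming sequence in $X'$ gives $D_ju=0$ a.e.\ on $\{u=0\}$, which closes the argument. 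This small measure-zero issue is the only real obstacle; everything else is a routine dominated-convergence computation driven by Lemma \ref{quotient rule}.
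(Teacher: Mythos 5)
Your argument is correct and follows essentially the same route as the paper: the forward direction via Proposition \ref{norm continuous}, and the converse via the truncations $\hat{\varphi}_n\wedge\|u\|_X$, Lemma \ref{quotient rule}, compact support plus mollification, and dominated convergence. The point you flag, that $D_ju=0$ a.e.\ on $\{u=0\}$, is handled correctly by your scalar Stampacchia argument with a countable norming family (and could alternatively be bypassed: once $v_n\to u$ in $L^p(\Omega,X)$ and $D_jv_n$ converges in $L^p(\Omega,X)$, uniqueness of the distributional derivative forces the limit to be $D_ju$), a detail the paper's own proof passes over silently.
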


\begin{proof}
It remains to show the 'if' part. Let $\hat{\varphi}_n \in \Cci(\Omega,\R)_+$ be convergent to $\|u\|_X$ in $W^{1,p}(\Omega,X)_+$ and define $\varphi_n:=\hat{\varphi}_n\wedge\|u\|_X$. The function $u_n:=\frac{u}{\|u\|_X}\varphi_n\,1_{u\neq0}$ is in $W^{1,p}(\Omega,X)$ by Lemma \ref{quotient rule} and is compactly supported. Using standard convolution arguments one shows that $u_n\in W^{1,p}_0(\Omega,X)$. The calculus rules in Lemma \ref{quotient rule} and the Dominated Convergence Theorem imply that $u_n\rightarrow u$ in $W^{1,p}(\Omega,X)$ and thus $u\in W^{1,p}_0(\Omega,X)$.
\end{proof}

We obtain several corollaries. The first one is an extension of the ideal property of $W^{1,p}_0(\Omega,\R)$.

\begin{corollary}
Let $X,Y$ be Banach spaces. If $u\in W^{1,p}_0(\Omega,X)$ and $v\in W^{1,p}(\Omega,Y)$ such that $\|v\|_Y\leq\|u\|_X$ almost everywhere, then  $v\in W^{1,p}_0(\Omega,Y)$.
\end{corollary}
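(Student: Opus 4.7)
Here is my plan. The idea is to reduce the corollary, via Theorem~\ref{W1p0 norm}, to the following scalar \emph{ideal property}: if $f,g\in W^{1,p}(\Omega,\R)$ with $0\le f\le g$ a.e.\ and $g\in W^{1,p}_0(\Omega,\R)$, then $f\in W^{1,p}_0(\Omega,\R)$. Granting this, the corollary follows immediately: $u\in W^{1,p}_0(\Omega,X)$ yields $\|u\|_X\in W^{1,p}_0(\Omega,\R)$ by Theorem~\ref{W1p0 norm}, $v\in W^{1,p}(\Omega,Y)$ yields $\|v\|_Y\in W^{1,p}(\Omega,\R)$ by Corollary~\ref{norm W1p}, the ideal property with $f=\|v\|_Y$ and $g=\|u\|_X$ produces $\|v\|_Y\in W^{1,p}_0(\Omega,\R)$, and one further application of Theorem~\ref{W1p0 norm} gives $v\in W^{1,p}_0(\Omega,Y)$.

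To establish the ideal property I would mimic the strategy in the proof of Theorem~\ref{W1p0 norm}. Choose $\hat\varphi_n\in\Cci(\Omega,\R)_+$ with $\hat\varphi_n\to g$ in $W^{1,p}(\Omega,\R)$, and set $\psi_n:=\hat\varphi_n\wedge f$. Each $\psi_n$ is in $W^{1,p}(\Omega,\R)$ (since $\wedge$ preserves $W^{1,p}$ by Theorem~\ref{lattice sobolev} applied to $E=\R$) and, being dominated by $\hat\varphi_n$, has compact support in $\Omega$; a standard convolution argument then puts $\psi_n$ in $W^{1,p}_0(\Omega,\R)$. It remains to show $\psi_n\to f$ in $W^{1,p}(\Omega,\R)$.

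Passing to a subsequence I may assume $\hat\varphi_n\to g$ and $D_j\hat\varphi_n\to D_jg$ hold pointwise almost everywhere and are dominated by a fixed $L^p$ function. Since $g\ge f$, we have $\psi_n\to g\wedge f=f$ pointwise, so $\psi_n\to f$ in $L^p$ by dominated convergence. For the derivatives the min-rule gives $D_j\psi_n=1_{\{\hat\varphi_n\le f\}}D_j\hat\varphi_n+1_{\{\hat\varphi_n>f\}}D_jf$; on $\{g>f\}$ we eventually have $\hat\varphi_n>f$, so $D_j\psi_n=D_jf$ for $n$ large, while on $\{g=f\}$ the Stampacchia lemma applied to $g-f\ge 0$ gives $D_jg=D_jf$ almost everywhere, so both branches converge to $D_jf$. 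Domination by $|D_j\hat\varphi_n|+|D_jf|$ then delivers $L^p$-convergence of the derivatives. The hard point is precisely this last step: outside the coincidence set $\{g=f\}$ the convergence is formal for $n$ large, but on $\{g=f\}$ the two branches of the min-rule genuinely compete, and it is the Stampacchia identity $D_jg=D_jf$ a.e.\ on $\{g=f\}$ that aligns their limits.
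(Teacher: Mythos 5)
Your proposal is correct and follows exactly the route the paper intends: the corollary is stated as an immediate consequence of Theorem~\ref{W1p0 norm} combined with the ideal property of $W^{1,p}_0(\Omega,\R)$ in the scalar case, which is precisely your reduction $\|u\|_X\in W^{1,p}_0(\Omega,\R)$, hence $\|v\|_Y\in W^{1,p}_0(\Omega,\R)$, hence $v\in W^{1,p}_0(\Omega,Y)$. The only difference is that the paper takes the scalar ideal property as known, whereas you supply a correct self-contained proof of it (modelled on the paper's own proof of Theorem~\ref{W1p0 norm}, using truncation by $\hat\varphi_n\wedge f$ and the fact that $D_j g=D_j f$ a.e.\ on $\{g=f\}$), which is a harmless addition rather than a different approach.
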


Next we extend a well known inequality to the vector-valued case.

\begin{corollary}[Poincar\'{e} inequality]
If $\Omega$ is bounded in direction $e_j$ then there exists a constant $C>0$ such that
\begin{align*}
\|D_ju\|_{L^p(\Omega,X)}\geq C\|u\|_{L^p(\Omega,X)}
\end{align*}
for all $u\in W^{1,p}_0(\Omega,X)$. Further this constant conicides with the Po\-in\-ca\-r\'{e} constant in the real case.
\end{corollary}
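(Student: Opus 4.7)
The plan is to reduce the vector-valued Poincar\'{e} inequality to the scalar Poincar\'{e} inequality by passing to the norm $\|u\|_X$, and then to verify the sharpness of the constant by a one-dimensional embedding of scalar functions into $X$-valued functions.

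First, I would take $u \in W^{1,p}_0(\Omega,X)$. By Theorem \ref{W1p0 norm} the function $\|u\|_X$ lies in $W^{1,p}_0(\Omega,\R)$, so the scalar Poincar\'{e} inequality, with optimal constant $C > 0$ depending on the width of $\Omega$ in direction $e_j$, yields
\begin{align*}
\|D_j\|u\|_X\|_{L^p(\Omega,\R)} \geq C\, \|\,\|u\|_X\,\|_{L^p(\Omega,\R)}.
\end{align*}
By Corollary \ref{norm estimate} we have the pointwise bound $|D_j\|u\|_X| \leq \|D_j u\|_X$ almost everywhere, which after integration gives $\|D_j u\|_{L^p(\Omega,X)} \geq \|D_j\|u\|_X\|_{L^p(\Omega,\R)}$. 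Since by definition of the Bochner norm $\|u\|_{L^p(\Omega,X)} = \|\,\|u\|_X\,\|_{L^p(\Omega,\R)}$, chaining these inequalities delivers the desired estimate with constant $C$. This shows that the optimal vector-valued constant is at least the scalar Poincar\'{e} constant.

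For the reverse inequality, which confirms that the constants coincide, I would fix any $x_0 \in X$ with $\|x_0\|_X = 1$ (possible since $X \neq \{0\}$) and, given $f \in W^{1,p}_0(\Omega,\R)$, consider $u(\xi) := f(\xi) x_0 \in W^{1,p}_0(\Omega,X)$; the distributional derivative is $D_j u = (D_j f)x_0$, and a direct computation yields $\|u\|_{L^p(\Omega,X)} = \|f\|_{L^p(\Omega,\R)}$ and $\|D_j u\|_{L^p(\Omega,X)} = \|D_j f\|_{L^p(\Omega,\R)}$. Hence any constant that works in the vector-valued inequality also works in the scalar one, so the optimal vector-valued constant is bounded above by the scalar Poincar\'{e} constant.

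There is no real obstacle: the machinery developed in the preceding sections, in particular Theorem \ref{W1p0 norm} together with Corollary \ref{norm estimate}, is exactly tailored to make this reduction work, and the mild subtlety is only to remember that the scalar optimal constant already appears in the intermediate estimate, so that the vector-valued constant cannot improve on it and, by the embedding argument, cannot be worse either.
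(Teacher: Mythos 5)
Your proof is correct and follows essentially the same route as the paper: reduce to the scalar Poincar\'{e} inequality via Theorem \ref{W1p0 norm} and the pointwise estimate $|D_j\|u\|_X|\leq\|D_ju\|_X$ from Corollary \ref{norm estimate}. Your additional embedding argument $f\mapsto f\,x_0$ to confirm that the optimal constants coincide is a correct and welcome explicit verification of a point the paper leaves implicit (it mirrors the argument used for the embedding norm in Theorem \ref{embedding}).
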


\begin{proof}
If $u\in W^{1,p}_0(\Omega,X)$, then $\|u\|_X\in W^{1,p}_0(\Omega,\R)$ and hence $\|u\|_X$ satisfies the Poincar\'{e} inequality. Using that $\|u\|_X$ and $u$ have the same $L^p$-norms and that, by Corollary \ref{norm estimate}, $|D_j\|u\|_X|\leq \|D_ju\|_X$ we obtain the result.
\end{proof}

We can also characterize $W^{1,p}_0(\Omega,X)$ weakly. For that we need the following lemma which immediately follows from Theorem \ref{W1p0 norm}.

\begin{lemma}
Let $X$ be a closed subspace of the Banach space $Y$ and let $u\in W^{1,p}(\Omega,X)$. Then $u\in W^{1,p}_0(\Omega,X)$ if and only if $u\in W^{1,p}_0(\Omega,Y)$
\end{lemma}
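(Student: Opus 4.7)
The plan is to deduce the equivalence directly from Theorem \ref{W1p0 norm} by observing that the norms on $X$ and $Y$ agree on $X$. More precisely, since $X$ is a closed subspace of $Y$, it carries the restricted norm, so $\|u(\xi)\|_X = \|u(\xi)\|_Y$ for almost every $\xi\in\Omega$. In particular, the scalar function $\|u\|_X$ and $\|u\|_Y$ are literally the same element of $L^p(\Omega,\R)$.

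Therefore I would argue as follows. First I would note that $u\in W^{1,p}(\Omega,X)$ implies $u\in W^{1,p}(\Omega,Y)$ with the same distributional derivatives, since any $\varphi\in\Cci(\Omega,\R)$ test equation for $u$ as an $X$-valued function is automatically the corresponding test equation for $u$ viewed as a $Y$-valued function. Hence both statements ``$u\in W^{1,p}_0(\Omega,X)$'' and ``$u\in W^{1,p}_0(\Omega,Y)$'' are meaningful. Applying Theorem \ref{W1p0 norm} in $X$ gives
\begin{align*}
u\in W^{1,p}_0(\Omega,X) \ \Longleftrightarrow\ \|u\|_X\in W^{1,p}_0(\Omega,\R),
\end{align*}
while applying it in $Y$ gives
\begin{align*}
u\in W^{1,p}_0(\Omega,Y) \ \Longleftrightarrow\ \|u\|_Y\in W^{1,p}_0(\Omega,\R).
\end{align*}
Since the two right-hand sides coincide, so do the two left-hand sides.

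There is no real obstacle here: the only point to be slightly careful about is that $X$ must carry the norm inherited from $Y$ (which is implicit in the phrase ``closed subspace of the Banach space $Y$''), so that the equality $\|u(\xi)\|_X=\|u(\xi)\|_Y$ holds pointwise almost everywhere. Once this is noted, the lemma is an immediate corollary of Theorem \ref{W1p0 norm} applied twice.
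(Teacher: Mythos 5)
Your argument is correct and is essentially the paper's own: the lemma is stated there as an immediate consequence of Theorem \ref{W1p0 norm}, exactly via the observation that $\|u\|_X=\|u\|_Y$ almost everywhere when $X$ carries the norm inherited from $Y$. Your additional remark that $u\in W^{1,p}(\Omega,X)$ implies $u\in W^{1,p}(\Omega,Y)$ with the same distributional derivatives is the right point to make explicit so that Theorem \ref{W1p0 norm} can be applied in $Y$.
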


\begin{theorem}
Let $u\in W^{1,p}(\Omega,X)$. The following are equivalent:
\begin{compactenum}[(i)]
\item $u\in W^{1,p}_0(\Omega,X)$
\item $\langle u,x'\rangle\in W^{1,p}_0(\Omega,\R)$ for every $x'\in X'$
\item $\langle u,x'\rangle\in W^{1,p}_0(\Omega,\R)$ for every $x'$ in a separating subset of $X'$
\end{compactenum}
\end{theorem}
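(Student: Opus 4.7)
The implications $(i) \Rightarrow (ii)$ and $(ii) \Rightarrow (iii)$ are immediate. For the first, given $\varphi_n \in \Cci(\Omega, X)$ converging to $u$ in $W^{1,p}(\Omega, X)$, continuity of any $x' \in X'$ gives $\langle \varphi_n, x' \rangle \in \Cci(\Omega, \R)$ with convergence to $\langle u, x' \rangle$ in $W^{1,p}(\Omega, \R)$. The second is tautological.

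The heart of the matter is $(iii) \Rightarrow (i)$, which I plan to reduce via Theorem \ref{W1p0 norm} to proving that a suitable norm of $u$ lies in $W^{1,p}_0(\Omega, \R)$. Strong measurability of $u$ and its distributional derivatives permits a reduction to the case where $X$ is separable; by Hahn-Banach the restriction of $D$ remains separating on this subspace. Since $D$ separates, $\mathrm{span}(D)$ is $\sigma(X', X)$-dense in $X'$, and weak-$*$ metrizability of $B_{X'}$ in the separable setting allows me to extract a countable sequence $(y'_n) \subset \mathrm{span}(D)$ of norm at most one for which $|x|_0 := \sup_n |\langle x, y'_n \rangle|$ is an equivalent norm on $X$. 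By linearity and norm-closedness of $W^{1,p}_0(\Omega, \R)$, every $\langle u, y'_n \rangle$ lies in $W^{1,p}_0(\Omega, \R)$, and so do the finite maxima $v_N := \max_{n \leq N} |\langle u, y'_n \rangle|$ since $W^{1,p}_0(\Omega, \R)$ is a lattice. These satisfy $v_N \nearrow |u|_0$ pointwise, while Corollary \ref{norm estimate} furnishes the uniform bound $|D_j v_N| \leq \|D_j u\|_X \in L^p(\Omega, \R)$. A pointwise identification $D_j v_N \to D_j |u|_0$ a.e., which invokes Theorem \ref{norm weak derivative} together with a weak-$*$ accumulation argument in the duality set $J_{|\cdot|_0}(u(\xi))$, combined with dominated convergence promotes this to convergence $v_N \to |u|_0$ in $W^{1,p}(\Omega, \R)$. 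Closedness of $W^{1,p}_0(\Omega, \R)$ then yields $|u|_0 \in W^{1,p}_0(\Omega, \R)$, and applying Theorem \ref{W1p0 norm} in the equivalent Banach space $(X, |\cdot|_0)$ gives $u \in W^{1,p}_0(\Omega, X)$.

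The main obstacle lies in extracting the countable sequence $(y'_n)$ from a merely separating $D$: one cannot in general expect $D$ to be $1$-norming, so the construction produces only an equivalent norm $|\cdot|_0$, and Theorem \ref{W1p0 norm} must be applied in the $(X, |\cdot|_0)$ setting rather than directly to $\|u\|_X$. Justifying this extraction through weak-$*$ density, metrizability, and a Krein-Smulian type argument, together with the a.e.\ convergence $D_j v_N \to D_j |u|_0$ obtained from a measurable selection in the duality map, constitute the principal technical points.
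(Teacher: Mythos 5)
Your implications $(i)\Rightarrow(ii)\Rightarrow(iii)$ are fine, but the reduction of $(iii)$ to a norm statement contains a genuine gap at exactly the point you flag as the ``main obstacle'': from a merely separating set $D$ you cannot, in general, extract functionals $y_n'\in\mathrm{span}(D)$ with $\|y_n'\|_{X'}\leq1$ such that $|x|_0:=\sup_n|\langle x,y_n'\rangle|$ is an \emph{equivalent} norm. Weak-$*$ density of $\mathrm{span}(D)$ (which is all that ``separating'' gives) does not imply that $\mathrm{span}(D)$ is norming: by a classical result of Davis and Johnson, every total (separating) subspace of $X'$ is norming if and only if $X$ is quasi-reflexive, so already for separable spaces such as $c_0$, $C[0,1]$ or $L^1(0,1)$ there exist separating subspaces $V\subset X'$ with $\inf_{\|x\|=1}\sup\{|\langle x,y'\rangle|:y'\in V,\ \|y'\|_{X'}\leq1\}=0$. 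For such a $D$ your $|\cdot|_0$ is not equivalent to $\|\cdot\|_X$ (it need not even be complete), and no Krein--Smulyan or metrizability argument can repair this, so the application of Theorem \ref{W1p0 norm} in $(X,|\cdot|_0)$ breaks down.

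The way around this, which is the route the paper takes, is not to try to norm $X$ with $D$ at all but to enlarge $D$: set $V:=\{x'\in X':\langle u,x'\rangle\in W^{1,p}_0(\Omega,\R)\}$. This is a linear subspace containing $D$, hence $\sigma(X',X)$-dense; by Krein--Smulyan it suffices to check that $V\cap B_{X'}(0,1)$ is weak-$*$ closed, and after the (legitimate) reduction to separable $X$ this follows from metrizability together with dominated convergence, since for $x_n'\rightharpoonup^* x'$ with $\|x_n'\|\leq1$ the functions $\langle u,x_n'\rangle$ and $\langle D_ju,x_n'\rangle$ are dominated by $\|u\|_X$ and $\|D_ju\|_X$ and converge pointwise, so $\langle u,x_n'\rangle\to\langle u,x'\rangle$ in $W^{1,p}(\Omega,\R)$ and the closed subspace $W^{1,p}_0(\Omega,\R)$ captures the limit. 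Hence $V=X'$, i.e.\ $(iii)\Rightarrow(ii)$. Once you have $(ii)$, your lattice argument becomes viable, because for separable $X$ a countable $1$-norming sequence in $B_{X'}$ does exist, and then $v_N:=\max_{k\leq N}|\langle u,x_k'\rangle|\in W^{1,p}_0(\Omega,\R)$, the bound $|D_jv_N|\leq\|D_ju\|_X$, the a.e.\ identification of $\lim_N D_jv_N$ with $D_j\|u\|_X$ via Theorem \ref{norm weak derivative} and a weak-$*$ cluster argument in $J(u(\xi))$, and Theorem \ref{W1p0 norm} give $(i)$; this would be a genuine alternative to the paper's own proof of $(ii)\Rightarrow(i)$, which instead embeds $X$ isometrically into $C[0,1]$ by Banach--Mazur and uses Schauder basis partial sums. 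So the defect is not your $(ii)\Rightarrow(i)$ idea but the missing intermediate step $(iii)\Rightarrow(ii)$, which cannot be obtained by making $D$ norming.
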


\begin{proof}
$(i)\Rightarrow(ii)$ We have $|\langle u,x'\rangle|\leq\|u\|_X\|x'\|_{X'}\in W^{1,p}_0(\Omega,\R)$ and hence $\langle u,x'\rangle\in W^{1,p}_0(\Omega,\R)$ by the ideal property in the real case.\\

$(ii)\Rightarrow(i)$ Since $u$ is measurable and any functional on a closed subspace of $X$ can be extended to the whole space by the Hahn-Banach Theorem we may assume that $X$ is separable. The Banach-Mazur theorem implies that $X\hookrightarrow C[0,1]$ isometrically. In view of the preceding lemma we may assume that $X=C[0,1]$. Now $X$ has a Schauder basis $(b_k)_{k\in\N}$ with coordinate functionals $(b_k')$. Let $u_n:=P_nu=\sum_{k=1}^{n}{\langle u,b_k'\rangle b_k}$, then $u_n\in W^{1,p}_0(\Omega,X)$ by assumption. Further $u_n\rightarrow u$ pointwise and if $C$ is the basis constant of $(b_k)$ we have that $u_n$ is bounded by $C\|u\|_X$. Hence $u_n\rightarrow u$ in $L^p(\Omega,X)$. Analogously one computes that $D_ju_n\rightarrow D_ju$ in $L^p(\Omega,X)$ and hence $u\in W^{1,p}_0(\Omega,X)$.\\

$(ii)\Rightarrow(iii)$ trivial\\

$(iii)\Rightarrow(ii)$ Consider the space
\begin{align*}
V:=\{x'\in X',\langle u,x'\rangle\in W^{1,p}_0(\Omega,\R)\}
\end{align*}
which by assumption is $\sigma(X',X)$-dense in $X'$. We will show that it is also closed in the $\sigma(X',X)$-topology. By the Krein-Smulyan Theorem, see e.g. \cite[Theorem 2.7.11]{Megginson}, it suffices to show that $V\cap B_{X'}(0,1)$ is $\sigma(X',X)$-closed. As in $(ii)$ we may assume that $X$ is separable and hence the $\sigma(X',X)$-topology is metrizable. Let $x_n'\in V\cap B_{X'}(0,1)$ such that $x_n'\rightharpoonup^*x'$. Then $\langle u,x_n'\rangle\rightarrow\langle u,x'\rangle$ pointwise and the functions $\langle u,x_n'\rangle$ are dominated by $\|u\|_X$. Hence $\langle u,x_n'\rangle\rightarrow\langle u,x'\rangle$ in $L^p(\Omega,\R)$ by the Dominated Convergence Theorem. The same argument shows that $\langle u,x_n'\rangle\rightarrow\langle u,x'\rangle$ in $W^{1,p}(\Omega,\R)$. Since $\langle u,x_n'\rangle\in W^{1,p}_0(\Omega,\R)$ it follows that $\langle u,x'\rangle\in W^{1,p}_0(\Omega,\R)$. Thus $V$ is $\sigma(X',X)$-closed which implies $(ii)$.
\end{proof}

\begin{example}
\begin{compactenum}[(i)]
\item Let $X=C(K)$ for a compact set $K$ and let $u\in W^{1,p}(\Omega,X)$. Then $u\in W^{1,p}_0(\Omega,X)$ if and only if $u(\cdot)(k)\in W^{1,p}_0(\Omega,\R)$ for every $k$ in a dense subset of $K$.
\item Let $X=\ell^r$ for $1\leq r\leq\infty$ and let $u=(u_n)_{n\in\N}\in W^{1,p}(\Omega,X)$. Then $u\in W^{1,p}_0(\Omega,X)$ if and only if $u_n\in W^{1,p}_0(\Omega,\R)$ for all $n\in\N$.
\item Let $X=L^r(S,d\mu)$ for $1\leq r\leq\infty$ and a measure space $S$ and let $u\in W^{1,p}(\Omega,X)$. Then $u\in W^{1,p}(\Omega,X)$ if and only if $\int_T{u(\cdot)(s)\,d\mu(s)}\in W^{1,p}_0(\Omega,\R)$ for all measurable sets $T\subset S$.
\end{compactenum}
\end{example}

Finally we describe $W^{1,p}_0(\Omega,X)$ via traces. Using standard convolution arguments Theorem \ref{extension theorem} yields

\begin{corollary}\label{Cdense}
Let $\Omega\subset\R^d$ be an open set with uniform Lipschitz boundary. Then for any $1\leq p<\infty$ the space $W^{1,p}(\Omega,X)\cap C(\overline{\Omega},X)$ is dense in $W^{1,p}(\Omega,X)$.
\end{corollary}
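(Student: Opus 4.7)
The plan is to combine extension with mollification. Given $u\in W^{1,p}(\Omega,X)$, first apply Theorem \ref{extension theorem} to obtain $\tilde{u}:=\mathcal{E}u\in W^{1,p}(\R^d,X)$ whose restriction to $\Omega$ agrees with $u$. Let $(\rho_n)_{n\in\N}\subset\Cci(\R^d,\R)$ be a standard mollifier and set $\tilde{u}_n:=\rho_n\ast\tilde{u}$. Since the convolution of an $L^p$-function with a smooth, compactly supported kernel is smooth (this is unchanged in the vector-valued case, as the Bochner integral inherits differentiation under the integral from the scalar case), we have $\tilde{u}_n\in C^\infty(\R^d,X)$, and in particular $u_n:=\tilde{u}_n|_\Omega\in C(\overline{\Omega},X)$.

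Next, I would verify that $u_n\to u$ in $W^{1,p}(\Omega,X)$. The derivative commutes with the mollifier in the sense that $D_j\tilde{u}_n=\rho_n\ast D_j\tilde{u}$ on $\R^d$; this is a standard integration-by-parts identity that carries over verbatim via the Hahn-Banach theorem applied to $\langle\tilde{u},x'\rangle$, and is already implicit in Lemma \ref{uniform convolution}. So both $\tilde{u}_n$ and its distributional derivatives are mollifications of $L^p(\R^d,X)$-functions. Standard mollifier approximation in Bochner $L^p$-spaces (which works for any $1\le p<\infty$ because translations act continuously in $L^p(\R^d,X)$) then gives
\begin{align*}
\|\rho_n\ast\tilde{u}-\tilde{u}\|_{L^p(\R^d,X)}\to 0,\qquad \|\rho_n\ast D_j\tilde{u}-D_j\tilde{u}\|_{L^p(\R^d,X)}\to 0
\end{align*}
for each $j=1,\ldots,d$. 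Restricting to $\Omega$ yields $u_n\to u$ in $W^{1,p}(\Omega,X)$.

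Combining these two observations, the sequence $u_n$ sits in $W^{1,p}(\Omega,X)\cap C(\overline{\Omega},X)$ and converges to $u$ in $W^{1,p}(\Omega,X)$, which is exactly the density statement. There is no real obstacle here; the only point worth flagging is that the convergence of mollifications in $L^p(\R^d,X)$ must be justified in the vector-valued setting, but it reduces to continuity of translations on $L^p(\R^d,X)$ (which holds for $1\le p<\infty$) by the usual Minkowski/Fubini argument, and is precisely the ingredient already used to prove Lemma \ref{uniform convolution}.
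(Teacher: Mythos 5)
Your proof is correct and is exactly the argument the paper intends: the paper's proof of Corollary \ref{Cdense} consists of invoking Theorem \ref{extension theorem} together with ``standard convolution arguments,'' which is precisely your extend-then-mollify scheme, including the justification of mollifier convergence in $L^p(\R^d,X)$ via continuity of translations.
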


Using this, we can prove the Trace Theorem. On $\partial\Omega$ w consider the $(d-1)$-dimensional Hausdorff measure.
\begin{theorem}[Trace Theorem]
Let $\Omega\subset\R^d$ be an open set with uniform Lipschitz boundary, \mbox{$d\geq2$} and $1\leq p<\infty$. Then there exists a linear and continuous operator
\begin{align*}
\text{Tr}_X:W^{1,p}(\Omega,X)\rightarrow L^p(\partial\Omega,X)
\end{align*}
such that $\text{Tr}_Xu=u_{|\partial\Omega}$ for all $u\in W^{1,p}(\Omega,X)\cap C(\overline{\Omega},X)$.\\

In particular, given $u\in W^{1,p}(\Omega,X)$ we have that $u\in W^{1,p}_0(\Omega,X)$ if and only if $\text{Tr}_Xu=0$.
\end{theorem}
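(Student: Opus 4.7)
My plan is to reduce the entire statement to the corresponding scalar trace theorem by exploiting the inequality $|D_j\|u\|_X|\leq\|D_ju\|_X$ from Corollary \ref{norm estimate}, the density Corollary \ref{Cdense}, and the characterization $u\in W^{1,p}_0(\Omega,X)\iff\|u\|_X\in W^{1,p}_0(\Omega,\R)$ from Theorem \ref{W1p0 norm}.

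First I would define $\text{Tr}_X$ on the dense subspace $W^{1,p}(\Omega,X)\cap C(\overline{\Omega},X)$ by the natural formula $\text{Tr}_X u := u_{|\partial\Omega}$. To show this mapping extends continuously, let $C$ be the norm of the scalar trace operator $\text{Tr}_\R:W^{1,p}(\Omega,\R)\to L^p(\partial\Omega,\R)$. For any continuous $u\in W^{1,p}(\Omega,X)\cap C(\overline{\Omega},X)$ one has $\|u\|_X\in W^{1,p}(\Omega,\R)\cap C(\overline{\Omega},\R)$ and the pointwise identity $\text{Tr}_\R\|u\|_X=\|u_{|\partial\Omega}\|_X$ holds on $\partial\Omega$. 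Combining this with Corollary \ref{norm estimate} yields
\begin{align*}
\|\text{Tr}_Xu\|_{L^p(\partial\Omega,X)}=\|\text{Tr}_\R\|u\|_X\|_{L^p(\partial\Omega,\R)}\leq C\|\|u\|_X\|_{W^{1,p}(\Omega,\R)}\leq C\|u\|_{W^{1,p}(\Omega,X)}.
\end{align*}
By Corollary \ref{Cdense} the subspace $W^{1,p}(\Omega,X)\cap C(\overline{\Omega},X)$ is dense in $W^{1,p}(\Omega,X)$, so $\text{Tr}_X$ extends uniquely to a continuous linear operator on the whole space with norm at most $C$.

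Next I would establish the commutation identity $\|\text{Tr}_Xu\|_X=\text{Tr}_\R\|u\|_X$ for every $u\in W^{1,p}(\Omega,X)$. Indeed, choose $u_n\in W^{1,p}(\Omega,X)\cap C(\overline{\Omega},X)$ with $u_n\to u$ in $W^{1,p}(\Omega,X)$. By Proposition \ref{norm continuous} we have $\|u_n\|_X\to\|u\|_X$ in $W^{1,p}(\Omega,\R)$, so by scalar continuity of the trace $\text{Tr}_\R\|u_n\|_X\to\text{Tr}_\R\|u\|_X$ in $L^p(\partial\Omega,\R)$. On the other hand $\text{Tr}_Xu_n\to\text{Tr}_Xu$ in $L^p(\partial\Omega,X)$ by the construction above, hence $\|\text{Tr}_Xu_n\|_X\to\|\text{Tr}_Xu\|_X$ in $L^p(\partial\Omega,\R)$. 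On the dense subspace both limits coincide pointwise, so the identity passes to the limit.

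Finally I would prove the characterization of $W^{1,p}_0(\Omega,X)$. If $u\in W^{1,p}_0(\Omega,X)$, approximate by $\varphi_n\in\Cci(\Omega,X)\subset W^{1,p}(\Omega,X)\cap C(\overline{\Omega},X)$; since $\text{Tr}_X\varphi_n=0$, continuity yields $\text{Tr}_Xu=0$. Conversely, if $\text{Tr}_Xu=0$ then by the commutation identity $\text{Tr}_\R\|u\|_X=0$, so by the scalar trace theorem $\|u\|_X\in W^{1,p}_0(\Omega,\R)$, and Theorem \ref{W1p0 norm} delivers $u\in W^{1,p}_0(\Omega,X)$. The main technical obstacle is the commutation $\|\text{Tr}_Xu\|_X=\text{Tr}_\R\|u\|_X$; it is obvious on continuous functions but essential for the final equivalence, and verifying it cleanly requires Proposition \ref{norm continuous} together with the two separate continuity statements above.
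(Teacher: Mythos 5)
Your proposal is correct and follows essentially the same route as the paper: define the trace on the dense subspace of continuous Sobolev functions, use the commutation $\|\text{Tr}_Xu\|_X=\text{Tr}_\R\|u\|_X$ together with Corollary \ref{norm estimate} to get continuity, extend by Corollary \ref{Cdense}, and deduce the $W^{1,p}_0$ characterization from Theorem \ref{W1p0 norm}. Your careful verification of the commutation identity via Proposition \ref{norm continuous} is exactly the step the paper invokes when it says the extended operator ``still commutes with the norm.''
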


\begin{proof}
The case $X=\R$ is well known, see \cite[Theorems 15.10 \& 15.23]{Leoni}. For a function $u\in W^{1,p}(\Omega,X)\cap C(\overline{\Omega},X)$ we define \mbox{$\text{Tr}_Xu:=u_{|\partial\Omega}$}. This operator and the norm on $X$ commute in the sense that $\|\text{Tr}_Xu\|_X=\text{Tr}_\R\|u\|_X$. Hence by Corollary \ref{norm estimate} we have that
\begin{align*}
\|\text{Tr}_Xu\|_{L^p(\partial\Omega,X)}&=\|\,\|\text{Tr}_Xu\|_X\,\|_{L^p(\partial\Omega,\R)}\\
&=\|\text{Tr}_\R\|u\|_X\,\|_{L^p(\partial\Omega,\R)}\\
&\leq \|\text{Tr}_\R\|_\mathcal{L}\cdot\|\,\|u\|_X\,\|_{W^{1,p}(\Omega,\R)}\\
&\leq \|\text{Tr}_\R\|_\mathcal{L}\cdot\|u\|_{W^{1,p}(\Omega,X)},
\end{align*}
for any $u\in W^{1,p}(\Omega,X)\cap C(\overline{\Omega},X)$. By Corollary \ref{Cdense} we may extend $\text{Tr}_X$ to $W^{1,p}(\Omega,X)$. The continuity of the norm on $W^{1,p}(\Omega,X)$ implies that the operator still commutes with the norm as before, hence the last claim follows from Theorem \ref{W1p0 norm}.
\end{proof}

\section{Compact Resolvents via Aubin-Lions}\label{AubinLionsApplication}

As an application of our multidimensional Aubin-Lions Theorem we consider unbounded operators on $L^p(\Omega,H)$. Here $\Omega\subset\R^d$ is open and bounded and $H$ is a separable Hilbert space.\\

Let $B$ be a sectorial operator on $H$, that is $(-\infty,0)\subset\rho(B)$ and $\sup_{\lambda<0}\|\lambda(\lambda-B)^{-1}\|<\infty$. It follows from \cite[Proposition 3.3.8]{ArendtBatty} that $B$ is densely defined and $\lim_{\lambda\rightarrow-\infty}\lambda(\lambda-B)^{-1}x=x$ for all $x\in H$. For $1\leq p<\infty$ define $\tilde{B}$ on $L^p(\Omega,H)$ by
\begin{align*}
D(\tilde{B})=L^p(\Omega,D(B))\\
\tilde{B}u=B\circ u.
\end{align*}
Then $\tilde{B}$ is also sectorial. Now let $A$ be a sectorial operator on $L^p(\Omega,\R)$. We want to extend $A$ to a sectorial operator $\tilde{A}$ on $L^p(\Omega,H)$.

\begin{lemma}\label{Hilbert extension}
Let $T\in\mathcal{L}(L^p(\Omega,\R))$. Then there exists a unique bounded operator $\tilde{T}$ on $L^p(\Omega,H)$ such that $\tilde{T}(f\otimes x)=Tf\otimes x$ for all $f\in L^p(\Omega,\R)$ and $x\in H$. Moreover $\|\tilde{T}\|_{\mathcal{L}}=\|T\|_{\mathcal{L}}$.
\end{lemma}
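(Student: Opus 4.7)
The natural plan is to define $\tilde T$ first on the algebraic tensor product $L^p(\Omega, \R) \otimes H$ by the prescribed formula and then extend by density to all of $L^p(\Omega, H)$. Uniqueness is automatic because elementary tensors span a dense subspace, so any bounded operator is determined by its values there. For the construction I would fix an orthonormal basis $(e_n)_{n \in \N}$ of $H$ (which exists since $H$ is separable) and for each finite-rank element $u = \sum_{k=1}^N f_k \otimes e_k$ set $\tilde T u := \sum_k (T f_k) \otimes e_k$; linearity of $T$ makes this independent of the chosen representation.

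Since $(e_k)_{k=1}^N$ is orthonormal, the pointwise identity $\|u(\xi)\|_H^2 = \sum_k |f_k(\xi)|^2$ gives the norm formula
\[
\|u\|_{L^p(\Omega, H)} = \left\|\left(\sum_{k=1}^N |f_k|^2\right)^{1/2}\right\|_{L^p(\Omega, \R)},
\]
and the analogous formula for $\tilde T u$ replaces each $f_k$ by $T f_k$. Hence the desired bound $\|\tilde T u\|_{L^p(\Omega, H)} \leq \|T\|_\mathcal{L} \|u\|_{L^p(\Omega, H)}$ reduces to the square-function inequality
\[
\left\|\left(\sum_{k=1}^N |T f_k|^2\right)^{1/2}\right\|_{L^p(\Omega, \R)} \leq \|T\|_\mathcal{L} \left\|\left(\sum_{k=1}^N |f_k|^2\right)^{1/2}\right\|_{L^p(\Omega, \R)}.
\]
For $p = 2$ this is immediate by Fubini, while for general $1 \leq p < \infty$ this is a Marcinkiewicz-Zygmund type estimate. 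This estimate is the main technical obstacle; it is precisely here that the Hilbert structure of $H$ is used, through the $\ell^2$-form of the norm on $H$.

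Once the square-function inequality is in hand, the passage from finite-rank elements to arbitrary $u \in L^p(\Omega, H)$ is routine. The partial sums $u^{(N)} := \sum_{k=1}^N (u, e_k)_H \otimes e_k$ converge to $u$ in $L^p(\Omega, H)$ by dominated convergence, since $\|u^{(N)}(\xi) - u(\xi)\|_H \to 0$ pointwise and $\|u^{(N)}(\xi)\|_H \leq \|u(\xi)\|_H$; the uniform bound on $\tilde T u^{(N)}$ then forces Cauchyness and identifies the limit as $\tilde T u$. The matching lower bound $\|\tilde T\|_\mathcal{L} \geq \|T\|_\mathcal{L}$ is immediate: for any unit vector $x_0 \in H$ and $f \in L^p(\Omega, \R)$ one has $\|f \otimes x_0\|_{L^p(\Omega, H)} = \|f\|_{L^p(\Omega, \R)}$ and $\|\tilde T(f \otimes x_0)\|_{L^p(\Omega, H)} = \|T f\|_{L^p(\Omega, \R)}$, whose supremum over $f$ is exactly $\|T\|_\mathcal{L}$. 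Combined with the upper bound from the square-function inequality, this yields the asserted equality.
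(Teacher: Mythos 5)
Your argument is correct, but it is genuinely different from what the paper does: the paper gives no proof at all and simply cites \cite[Theorem 2.9]{VanNeerven}, whereas you outline the standard construction behind that result. Your reduction is sound: expanding in a countable orthonormal basis of the separable space $H$, the norm identity $\|u\|_{L^p(\Omega,H)}=\|(\sum_k|f_k|^2)^{1/2}\|_{L^p(\Omega,\R)}$ turns boundedness of $\tilde T$ into the square-function estimate, the partial-sum/dominated-convergence argument gives density of finite-rank elements, and the lower bound via a single elementary tensor is exactly right; uniqueness follows since elementary tensors span a dense subspace. The one step you leave as a black box is the Marcinkiewicz--Zygmund inequality itself, and there is a point worth making explicit if you want the sharp conclusion $\|\tilde T\|_{\mathcal L}=\|T\|_{\mathcal L}$ rather than mere boundedness: proving the square-function estimate via Khintchine's inequality with Rademacher variables only yields a $p$-dependent constant, while Gaussian randomization gives the constant exactly $\|T\|_{\mathcal L}$, because $\sum_k a_kg_k$ is Gaussian with variance $\sum_k a_k^2$, so that
\begin{align*}
\Bigl\|\bigl(\textstyle\sum_k|Tf_k|^2\bigr)^{1/2}\Bigr\|_{L^p}^p
=\gamma_p^{-p}\,\mathbb{E}\,\bigl\|T\bigl(\textstyle\sum_k g_kf_k\bigr)\bigr\|_{L^p}^p
\leq\|T\|_{\mathcal L}^p\,\Bigl\|\bigl(\textstyle\sum_k|f_k|^2\bigr)^{1/2}\Bigr\|_{L^p}^p,
\end{align*}
with $\gamma_p$ the $p$-th moment of a standard Gaussian. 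This Gaussian argument (valid for all $1\leq p<\infty$, which is the range used in the paper's Section \ref{AubinLionsApplication}) is essentially the proof of the cited theorem, so your sketch is a legitimate self-contained alternative to the paper's citation, at the price of invoking (or reproving) Marcinkiewicz--Zygmund with the sharp constant.
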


\begin{proof}
See \cite[Theorem 2.9]{VanNeerven}.
\end{proof}

As a consequence of Lemma \ref{Hilbert extension}, given $\lambda<0$ there exists a unique bounded operator $\tilde{R}(\lambda)$ on $L^p(\Omega,H)$ such that $\tilde{R}(\lambda)(f\otimes x)=R(\lambda,A)f\otimes x$ for all $f\in L^p(\Omega,\R)$ and $x\in H$. It follows that $(\tilde{R}(\lambda))_{\lambda<0}$ is a pseudoresolvent on $(-\infty,0)$ and that $\lim_{\lambda\rightarrow-\infty}\lambda\tilde{R}(\lambda)u=u$ for all $u\in L^p(\Omega,H)$. Since $\textnormal{ker }\tilde{R}(\lambda)$ is independent of $\lambda$, it follows that $\tilde{R}(\lambda)$ is injective. Consequently there exists a unique operator $\tilde{A}$ on $L^p(\Omega,H)$ such that $(-\infty,0)\subset\rho(\tilde{A})$ and $\tilde{R}(\lambda)=R(\lambda,\tilde{A})$ for all $\lambda<0$. Thus $\tilde{A}$ is a sectorial operator on $L^p(\Omega,H)$. For tensors $u=f\otimes x$ we have
\begin{align*}
R(\lambda,\tilde{A})R(\lambda,\tilde{B})u=R(\lambda,A)f\otimes R(\lambda,B)x=R(\lambda,\tilde{B})R(\lambda,\tilde{A})u
\end{align*}
hence the two resolvents commute. If $\varphi_{\textnormal{sec}}(\tilde{A})+\varphi_{\textnormal{sec}}(\tilde{B})<\pi$, a result of DaPrato-Grisvard \cite[Section 4.2]{Arendt2004} says that $C=\tilde{A}+\tilde{B}$ is closable and $\overline{C}$ is a sectorial operator.\\

Assuming that $A$ and $B$ have compact resolvents, it is not obvious that also $\overline{C}$ has compact resolvent. We will show this if $A$ and $B$ enjoy maximal regularity and $D(A)\subset W^{1,p}(\Omega,\R)$.

\begin{lemma}
Assume that $D(A)\subset W^{1,p}(\Omega,\R)$. Then also $D(\tilde{A})\subset W^{1,p}(\Omega,H)$.
\end{lemma}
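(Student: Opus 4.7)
The plan is to show that the range of $R(\lambda,\tilde A)$ is contained in $W^{1,p}(\Omega,H)$. Fix some $\lambda<0$ and $j\in\{1,\ldots,d\}$. First, I verify that the composition $T_j:=D_j\circ R(\lambda,A)$ is a bounded operator on $L^p(\Omega,\R)$: since $A$ is closed, $D(A)$ with the graph norm is a Banach space; the inclusion $D(A)\hookrightarrow W^{1,p}(\Omega,\R)$ is closed (both norms dominate the $L^p$-norm), hence bounded by the closed graph theorem. Composing with the continuous distributional derivative $D_j:W^{1,p}(\Omega,\R)\to L^p(\Omega,\R)$ and the continuous resolvent $R(\lambda,A)$ yields boundedness of $T_j$. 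Lemma~\ref{Hilbert extension} then produces a unique $\tilde T_j\in\mathcal{L}(L^p(\Omega,H))$ characterised by $\tilde T_j(f\otimes x)=T_jf\otimes x$ for all $f\in L^p(\Omega,\R)$ and $x\in H$.

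The core identity to be established is
\begin{align*}
D_jR(\lambda,\tilde A)v=\tilde T_jv\quad\text{for every }v\in L^p(\Omega,H),
\end{align*}
which would at once give $D(\tilde A)=R(\lambda,\tilde A)L^p(\Omega,H)\subset W^{1,p}(\Omega,H)$. On an elementary tensor $v=f\otimes x$ the construction of $\tilde A$ gives $R(\lambda,\tilde A)(f\otimes x)=R(\lambda,A)f\otimes x$, whose distributional $j$-th derivative (a scalar $W^{1,p}$-function tensored with a constant vector) equals $D_jR(\lambda,A)f\otimes x=T_jf\otimes x=\tilde T_j(f\otimes x)$. Thus the identity holds on the algebraic tensor product $L^p(\Omega,\R)\otimes H$.

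To extend it to an arbitrary $v$, I use that separability of $H$ makes $L^p(\Omega,\R)\otimes H$ dense in $L^p(\Omega,H)$. Pick tensors $v_n\to v$. Then $R(\lambda,\tilde A)v_n\to R(\lambda,\tilde A)v$ and $\tilde T_jv_n\to\tilde T_jv$ in $L^p(\Omega,H)$ by continuity. The distributional partial derivative, viewed as an operator on $L^p(\Omega,H)$ with domain $W^{1,p}(\Omega,H)$, is closed: this follows by passing to the limit in the defining integration-by-parts identity via the Dominated Convergence Theorem. Therefore $R(\lambda,\tilde A)v\in W^{1,p}(\Omega,H)$ with $D_jR(\lambda,\tilde A)v=\tilde T_jv$, completing the argument.

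The main technical point, and where the argument genuinely uses separability of $H$, is the last paragraph: one must combine tensor density with closedness of $D_j$ to transfer the identity from elementary tensors to arbitrary elements of $L^p(\Omega,H)$. Everything else is linear-algebraic bookkeeping around the resolvent extension provided by Lemma~\ref{Hilbert extension}, together with the preliminary closed-graph argument that turns the set-theoretic inclusion $D(A)\subset W^{1,p}(\Omega,\R)$ into the operator-theoretic boundedness of $T_j$.
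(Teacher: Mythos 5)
Your argument is correct and follows essentially the same route as the paper: extend $D_j\circ R(\lambda,A)$ to $L^p(\Omega,H)$ via Lemma~\ref{Hilbert extension}, verify the identity on elementary tensors, and pass to general $u$ by tensor density. The only cosmetic difference is that you conclude via closedness of the distributional derivative (applied for each $j$ with the same approximating sequence), whereas the paper phrases the same limit argument as a Cauchy-sequence/completeness argument in $W^{1,p}(\Omega,H)$; you also spell out the closed-graph step behind the boundedness of $D_j\circ R(\lambda,A)$, which the paper leaves implicit.
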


\begin{proof}
Let $\lambda<0$ and $j\in\{1,\ldots,d\}$. Then $D_j\circ R(\lambda,A)\in\mathcal{L}(L^p(\Omega,\R))$. By Lemma \ref{Hilbert extension} there exists $\tilde{Q}_j\in\mathcal{L}(L^p(\Omega,H))$ such that
\begin{align*}
\tilde{Q}_j(f\otimes x)=D_j(R(\lambda,A)f)\otimes x
\end{align*}
for all $f\in L^p(\Omega,\R)$ and $x\in H$. For every $u\in L^p(\Omega,H)$ there exist linear combinations $u_n$ of tensors such that $u_n\rightarrow u$ in $L^p(\Omega,H)$. Then $R(\lambda,\tilde{A})u_n\in W^{1,p}(\Omega,H)$ and $R(\lambda,\tilde{A})u_n\rightarrow R(\lambda,\tilde{A})u$ in $L^p(\Omega,H)$. Moreover
\begin{align*}
\|D_j R(\lambda,\tilde{A})u_m-D_jR(\lambda,\tilde{A})u_m\|_{L^p(\Omega,H)}\leq\|\tilde{Q}_j\|_\mathcal{L}\|u_m-u_n\|_{L^p(\Omega,H)}.
\end{align*}
Thus $R(\lambda,\tilde{A})u_n$ is a Cauchy sequence in $W^{1,p}(\Omega,H)$. This implies that $R(\lambda,\tilde{A})u\in W^{1,p}(\Omega,H)$.
\end{proof}

For the notion of \emph{bounded imaginary powers} we refer to \cite[Section 4.4]{Arendt2004} and the references given there.

\begin{theorem}
Let $A$ be a sectorial injective operator on $L^p(\Omega,\R)$ such that $D(A)\subset W^{1,p}(\Omega,\R)$, where $1<p<\infty$. Let $B$ be a sectorial injective operator on $H$ with compact resolvent. Suppose that both operators have bounded imaginary powers and that $\varphi_{\textnormal{bip}}(A)+\varphi_{\textnormal{bip}}(B)<\pi$. Then $\tilde{A}+\tilde{B}$ with domain $D(\tilde{A})\cap D(\tilde{B})$ is closed (and hence sectorial) and has compact resolvent.
\end{theorem}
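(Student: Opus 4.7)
The plan is to split the argument into two parts: first establish closedness of $\tilde{A}+\tilde{B}$ on $D(\tilde{A})\cap D(\tilde{B})$ via the Dore--Venni theorem, and then use the multidimensional Aubin--Lions Theorem \ref{AubinLions} to deduce compactness of the resolvent.

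For closedness, I would invoke the Dore--Venni theorem on the UMD space $L^p(\Omega,H)$ (UMD since $H$ is a Hilbert space and $1<p<\infty$). This requires verifying that $\tilde{A}$ and $\tilde{B}$ inherit bounded imaginary powers from $A$ and $B$ with the same angles. For $\tilde{A}$: by Lemma \ref{Hilbert extension}, each $A^{is}$ extends uniquely to a bounded operator on $L^p(\Omega,H)$ with the same norm, and one identifies this extension with $\tilde{A}^{is}$ by checking the corresponding identity on tensors together with a density argument (and using that the resolvent of $\tilde A$ was built via the same tensor-extension procedure). For $\tilde{B}$: since $\tilde{B}$ acts pointwise, so do its imaginary powers, and therefore $\|\tilde{B}^{is}\|_\mathcal{L}=\|B^{is}\|_\mathcal{L}$. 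Thus $\varphi_{\textnormal{bip}}(\tilde{A})\leq\varphi_{\textnormal{bip}}(A)$ and $\varphi_{\textnormal{bip}}(\tilde{B})\leq\varphi_{\textnormal{bip}}(B)$, so their sum is still strictly less than $\pi$. Combined with the commuting resolvents established earlier, Dore--Venni then yields that $\tilde{A}+\tilde{B}$ on $D(\tilde{A})\cap D(\tilde{B})$ is closed and sectorial, and in particular delivers the maximal regularity estimate
\begin{equation*}
\|\tilde{A}u\|_{L^p(\Omega,H)}+\|\tilde{B}u\|_{L^p(\Omega,H)}\leq C\|(\tilde{A}+\tilde{B})u\|_{L^p(\Omega,H)}
\end{equation*}
for $u\in D(\tilde A)\cap D(\tilde B)$.

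For compactness, fix $\lambda<0$ in the resolvent set of $\tilde{A}+\tilde{B}$ and let $(u_n)$ be a bounded sequence in $L^p(\Omega,H)$. Set $v_n:=R(\lambda,\tilde{A}+\tilde{B})u_n$. The maximal regularity estimate combined with $\|v_n\|_{L^p(\Omega,H)}\leq|\lambda|^{-1}\|u_n\|_{L^p(\Omega,H)}+|\lambda|^{-1}\|(\tilde A+\tilde B)v_n\|$ shows that $(v_n)$ is bounded in the intersection norm of $D(\tilde{A})\cap D(\tilde{B})$. By the preceding lemma $D(\tilde{A})\hookrightarrow W^{1,p}(\Omega,H)$ continuously, and $D(\tilde{B})=L^p(\Omega,D(B))$ by construction, so $(v_n)$ is bounded both in $W^{1,p}(\Omega,H)$ and in $L^p(\Omega,D(B))$. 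Since $B$ has compact resolvent, the natural embedding $D(B)\hookrightarrow H$ is compact (as $R(\lambda,B):H\to D(B)$ is a topological isomorphism whose composition with the injection is the compact operator $R(\lambda,B):H\to H$). Assuming the standing hypothesis that $\Omega$ has Lipschitz boundary so that Theorem \ref{AubinLions} applies, the embedding $W^{1,p}(\Omega,H)\cap L^p(\Omega,D(B))\hookrightarrow L^p(\Omega,H)$ is compact, so a subsequence of $(v_n)$ converges in $L^p(\Omega,H)$. Hence $R(\lambda,\tilde{A}+\tilde{B})$ is compact.

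The main obstacle is the first step: verifying that the extensions $\tilde{A}$ and $\tilde{B}$ retain BIP with the same angles, so that Dore--Venni is available and furnishes the maximal regularity estimate. Once this is in hand, the second step is essentially a direct application of our Aubin--Lions theorem combined with the previous lemma on the inclusion $D(\tilde A)\subset W^{1,p}(\Omega,H)$.
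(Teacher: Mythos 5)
Your proposal is correct and follows essentially the same route as the paper: verify that $\tilde{A}$ and $\tilde{B}$ inherit bounded imaginary powers with angles summing to less than $\pi$, apply the Dore--Venni theorem to get closedness and sectoriality of $\tilde{A}+\tilde{B}$, and then combine the continuous embedding $D(\tilde{A})\cap D(\tilde{B})\hookrightarrow W^{1,p}(\Omega,H)\cap L^p(\Omega,D(B))$ (which the paper obtains via the Closed Graph Theorem and you obtain, equivalently, from the explicit maximal regularity estimate) with Theorem \ref{AubinLions} to conclude compactness of the resolvent. Your explicit remarks on why $D(B)\hookrightarrow H$ is compact and on the Lipschitz boundary hypothesis needed for Theorem \ref{AubinLions} are consistent with the paper's standing assumptions and do not change the argument.
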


\begin{proof}
It is easy to see that $\tilde{A}$ and $\tilde{B}$ both have bounded imaginary powers and $\varphi_{\textnormal{bip}}(\tilde{A})\leq \varphi_{\textnormal{bip}}(A)$, $\varphi_{\textnormal{bip}}(\tilde{B})\leq \varphi_{\textnormal{bip}}(B)$. It follows from the Dore-Venni Theorem, see e.g. \cite[Theorem 4.4.8]{Arendt2004}, that $\tilde{A}+\tilde{B}$ is a sectorial operator. Thus
\begin{align*}
D(\tilde{A}+\tilde{B})=D(\tilde{A})\cap D(\tilde{B})\subset W^{1,p}(\Omega,H)\cap L^p(\Omega,D(B))
\end{align*}
with continuous embedding by the Closed Graph Theorem. By Theorem \ref{AubinLions} the embedding $W^{1,p}(\Omega,H)\cap L^p(\Omega,D(B))\hookrightarrow L^p(\Omega,H)$ is compact. This implies that $\tilde{A}+\tilde{B}$ has compact resolvent.
\end{proof}

\begin{remark}
If $-A$ and $-B$ generate $C_0$-semigroups $(T(t))_{t\geq0}$ and $(S(t))_{t\geq0}$ of compact operators and if $p=2$, then another argument is possible. In fact, $-\tilde{A}$ and $-\tilde{B}$ generate the semigroups $(\tilde{T(t)})_{t\geq0}$ and $(\tilde{S(t)})_{t\geq0}$ which commute. Thus $U(t):=\tilde{T(t)}\tilde{S(t)}$ defines a $C_0$-semigroup on $L^2(\Omega,H)$ whose generator is $-\overline{C}$, the closure of $-\tilde{A}-\tilde{B}$. Kubrusly and Levan \cite[Theorem 2]{KubruslyLevan} proved that tensor products of compact operators on separable Hilbert spaces are compact. Thus $U(t)$ is compact and hence $-\overline{C}$ has compact resolvent. However, in our case this argument does not work, since there seems to be no simple formula relating the resolvents of $\tilde{A}$ and $\tilde{B}$ with that of $\overline{C}$.
\end{remark}

\newcommand{\etalchar}[1]{$^{#1}$}
\def\cprime{$'$}
\providecommand{\bysame}{\leavevmode\hbox to3em{\hrulefill}\thinspace}
\providecommand{\MR}{\relax\ifhmode\unskip\space\fi MR }
\providecommand{\MRhref}[2]{
  \href{http://www.ams.org/mathscinet-getitem?mr=#1}{#2}
}
\providecommand{\href}[2]{#2}

\end{document}